\documentclass[11pt]{amsart}

\usepackage{ifpdf}          
\usepackage{etoolbox}       

\usepackage[margin=1in]{geometry}

\usepackage[foot]{amsaddr}  

\usepackage[numbers]{natbib} 

\usepackage{hyperref}
\hypersetup{colorlinks=true,allcolors=blue}

\providecommand{\doi}[1]{}
\renewcommand{\doi}[1]{\href{https://doi.org/#1}{https://doi.org/#1}}

\usepackage{comment}
\usepackage{graphicx}
\graphicspath{{figures/}}
\usepackage{booktabs}
\usepackage{multirow}
\usepackage{adjustbox}
\usepackage{rotating}
\usepackage{bm}
\usepackage[normalem]{ulem}
\usepackage{wrapfig}

\usepackage{pgfplots}
\usepackage{mathtools}
\usepackage{amssymb}
\usepackage[inline]{enumitem}
\usepackage{siunitx}
\newcommand{\tnum}[1]{\num[
  scientific-notation=true,
  round-mode=figures,
  round-precision=3,
  output-exponent-marker=\text{e}
]{#1}}
\usepackage{algorithm}
\usepackage{algpseudocode}

\AtBeginEnvironment{algorithmic}{\small}

\usepackage[table]{xcolor}

\usepackage{cleveref}

\crefname{equation}{}{}
\Crefname{equation}{}{}

\newcommand\fnum[1]{\num[scientific-notation=fixed,round-precision=2,round-mode=places]{#1}}

\newcommand\inum[1]{\num[scientific-notation=fixed,round-precision=0,round-mode=places,group-separator = {,}]{#1}}

\newcommand{\defeq}{\ensuremath{\mathrel{\mathop:}=}}

\newcommand\tabadjust{\centering\scriptsize}

\DeclareMathOperator*\minopt{\ensuremath{minimize}}
\DeclareMathOperator*\argmin{\ensuremath{arg\,min}}
\DeclareMathOperator*\argmax{\ensuremath{arg\,max}}

\newcommand{\hcA}{\widehat{\mathcal{A}}}
\newcommand{\halpha}{\widehat{\alpha}}

\ifpdf
  \DeclareGraphicsExtensions{.eps,.pdf,.png,.jpg}
\else
  \DeclareGraphicsExtensions{.eps}
\fi

\theoremstyle{plain}
\newtheorem{theorem}{Theorem}[section]
\newtheorem{lemma}[theorem]{Lemma}
\newtheorem{proposition}[theorem]{Proposition}

\theoremstyle{definition}
\newtheorem{assumption}[theorem]{Assumption}

\theoremstyle{remark}
\newtheorem{remark}[theorem]{Remark}

\crefname{assumption}{Assumption}{Assumptions}
\Crefname{assumption}{Assumption}{Assumptions}
\crefname{remark}{Remark}{Remarks}
\Crefname{remark}{Remark}{Remarks}
\crefname{fact}{Fact}{Facts}
\Crefname{fact}{Fact}{Facts}
\crefname{claim}{Claim}{Claims}
\Crefname{claim}{Claim}{Claims}

\usepackage{amsopn}

\makeatletter

\renewcommand{\subsection}{%
  \@startsection{subsection}{2}%
  {\z@}%
  {1.5ex \@plus .2ex}%
  {.8ex}%
  {\normalfont\bfseries\itshape}}

\renewcommand{\subsubsection}{%
  \@startsection{subsubsection}{3}%
  {\z@}%
  {1.25ex \@plus .2ex}%
  {-1em}%
  {\normalfont\bfseries\itshape}}

\renewcommand{\paragraph}{%
  \@startsection{paragraph}{4}%
  {\z@}%
  {1ex}%
  {-1em}%
  {\normalfont\bfseries\itshape}}

\makeatother

\title[TROM for Inverse Problems]{Tensor-Based Reduced-Order Modeling for
Optimization-Based Inverse Problems}

\author[Sahidul Islam]{Sahidul Islam\textsuperscript{$\dag$}}

\author[Andreas Mang]{Andreas Mang\textsuperscript{$\ddag$}}
\email{andreas.mang@tufts.edu}

\author[Maxim Olshanskii]{Maxim Olshanskii\textsuperscript{$\dag$}}
\email{maolshanskiy@uh.edu}

\address[$\dag$]{Department of Mathematics, University of Houston, Houston, TX 77204, USA}
\address[$\ddag$]{Department of Mathematics, Tufts University, Medford, MA 02155, USA}

\thanks{The work of the authors was partially supported by the National Science Foundation (NSF) under awards DMS-2145845 (AM) and DMS-2309197 (MO and SI). This material is based upon work supported by the NSF under Grant DMS-1929284 while AM was in residence at the Institute for Computational and Experimental Research in Mathematics in Providence, RI, during the ``Stochastic and Randomized Algorithms in Scientific Computing: Foundations and Applications'' semester program. We are grateful for the support of the Research Computing Data Core at the University of Houston.}

\keywords{inverse problems, reduced-order modeling, error analysis,
low-rank tensor approximation, Gauss--Newton method, regularized nonlinear
least squares, parameter estimation}

\subjclass[2020]{65M32, 65F55, 15A69, 65K10}

\begin{document}

\begin{abstract}
We develop a tensor reduced-order modeling (TROM) framework for optimization-based inverse problems governed by parameter-dependent dynamical systems. The approach approximates the parameter-to-observation map directly in tensor-train (TT) format, using either TT-SVD or TT-Cross compression, and integrates the resulting representation into a regularized nonlinear least-squares formulation. Beyond accelerating forward evaluations, the low-rank tensor structure is used to reformulate the inverse problem in reduced coordinates, assemble the Gauss--Newton quantities without forming the full observation-space Jacobian, and perform TROM-based objective minimization over the discrete parameter grid. This tensor optimization step can be used either as a stand-alone approximate minimization procedure or as a data-informed initialization for a subsequent Gauss--Newton solve. The method is studied for two inverse problems: an inverse heat-transfer problem in a heterogeneous medium, where the unknown parameters describe the locations and radii of multiple low-conductivity inclusions, and a FitzHugh--Nagumo parameter-estimation problem with a highly nonconvex optimization landscape. Numerical experiments assess the effects of reduced-order model approximation error, measurement noise, regularization, initialization, spatial discretization, and increasing parameter dimension. The results show that TROM can reproduce the behavior of full-order inversion at a substantially reduced online cost. The experiments also demonstrate that reduced-coordinate inversion, tensor-based optimization, and appropriate regularization improve robustness in higher-dimensional, noisy, and strongly nonconvex regimes. For the continuous TROM inverse problem, we also develop an error-to-inversion analysis. Under local strong convexity of the regularized FOM objective and parametric smoothness of the FOM observation map, the error between the parameters recovered with the full-order model and the TROM is bounded in terms of controlled uniform errors in the surrogate map and its Jacobian, together with local FOM and curvature quantities.
\end{abstract}

\maketitle

\section{Introduction}

Inverse problems for parameter-dependent forward models, including systems governed by partial differential equations (PDEs) and ordinary differential equations (ODEs), arise in many applications where internal material properties, geometric features, source terms, or model parameters must be inferred from indirect and noisy measurements~\cite{kaipio2005statistical,stuart2010inverse,isakov2017inverse,mang2018pdeconstrained}. In computational practice, solving such problems usually requires repeated evaluations of a high-fidelity forward model, often together with derivative or sensitivity information. This cost can become prohibitive when the forward model is large-scale, the parameter dimension is moderate or high, or the inverse problem must be solved repeatedly for different data realizations. These challenges have motivated extensive research on surrogate and reduced-order models (ROMs) for inverse problems, uncertainty quantification, and data assimilation; see, e.g., \cite{frangos2010surrogate,galbally2010nonlinear,cui2015datadriven,benner2015survey,quarteroni2016reduced,ghattas2021learning}. Such approaches replace the expensive parameter-to-observation map by a computationally efficient approximation while retaining sufficient accuracy for the inverse solve.

The need for fast and reliable inverse solvers is particularly pronounced in digital-twin settings, where a computational model is updated as new data become available to monitor, predict, or control the behavior of a physical system \cite{grieves2017digital,tao2018digital,kapteyn2021probabilistic,nasem2024foundational}. Surrogate models are of particular importance if inference needs to happen in (close to) real-time. In this context, inverse problems are not isolated offline computations: they are a recurring component of the model-update loop. Hence, the online stage must be inexpensive, stable with respect to noise and model error, and capable of handling repeated optimization tasks under limited computational budgets.

Classical ROMs typically exploit low-dimensional structure in the state, solution manifold, or input--output map, for example through projection-based model reduction or reduced basis methods~\cite{benner2015survey,quarteroni2016reduced,hesthaven2016certified}. These techniques have been successfully used in large-scale inverse problems~\cite{galbally2010nonlinear,lieberman2010parameter,cui2015datadriven}. However, inverse problems with several unknown parameters introduce an additional difficulty: the parameter-to-observation map may need to be approximated over a multidimensional parameter domain. Direct sampling over a tensor-product grid quickly becomes infeasible, while local surrogates may not provide the robustness needed for globalized or multi-start optimization.

Tensor reduced-order models (TROMs) provide a natural way to address this difficulty by exploiting low-rank structure in the joint dependence of the observable on the model parameters. In particular, low-rank tensor representations, such as tensor-train (TT) decomposition~\cite{oseledets2011tensor}, offer compact approximations of high-dimensional arrays and support efficient tensor arithmetic, interpolation, and optimization. Low-rank tensor methods are now well developed in numerical analysis and high-dimensional approximation~\cite{kolda2009tensor,hackbusch2012tensor,grasedyck2013literature}, and tensor-train cross (TT-Cross) techniques can construct compressed representations by sampling only a subset of tensor entries~\cite{oseledets2010tt,savostyanov2014quasioptimality,dolgov2015polynomial}. TROMs have been developed for parametric dynamical systems~\cite{mamonov2022interpolatory,mamonov2024tensorial,islam2026tensorial}. Nevertheless, the systematic use of \emph{TROMs in inverse modeling} remains comparatively unexplored, especially beyond the basic goal of accelerating forward evaluations.

In this paper, we investigate TROMs for deterministic numerical inversion. Our focus is not only on replacing the full-order model (FOM) with a fast surrogate for forward solves, but also on exploiting the tensor structure in inverse-problem-specific components: Gauss--Newton (GN) sensitivity assembly, reduced-coordinate inversion, and data-informed initialization (see C1--C5 below). These uses are important because the performance of nonlinear inverse solvers depends not only on the cost of residual evaluation, but also on the quality of sensitivity information, the choice of regularization, and the basin of attraction of the initial iterate.

We study the proposed approach on two deterministic inverse problems. The first is a parabolic heat-transfer problem in a heterogeneous medium with low-conductivity inclusions. The unknown parameter vector encodes the locations and radii of one, two, or three inclusions, leading to parameter dimensions $D=3,6,9$. The available data consist of time-dependent temperature observations at several sensors placed on the domain boundary. The second example is a FitzHugh--Nagumo parameter-estimation problem, where the unknown parameters enter a nonlinear excitable-dynamics model. Although this forward model is low-dimensional, the associated least-squares objective has a strongly nonconvex landscape and is sensitive to initialization~\cite{villalobos2026neural}.

Rather than constructing a reduced model for the full state, we approximate directly the parameter-to-observation map. The corresponding observation tensor is compressed in TT format using either TT-SVD~\cite{oseledets2011tensor} or interpolation techniques based on TT-Cross and AMEn-type ideas~\cite{oseledets2010tt,dolgov2015polynomial}. The resulting surrogate is then used inside a regularized nonlinear least-squares formulation of the inverse problem.

We furthermore develop an error-to-inversion analysis for the TROM inverse problem. Under local strong convexity of the regularized FOM objective and suitable parametric smoothness of the FOM observation map, the analysis bounds the perturbation of the recovered parameter caused by tensor compression and interpolation. It thereby separates the approximation accuracy of the TROM from the local stability of the inverse problem.

\paragraph{Contributions}
Existing TROMs for parametric dynamical systems are used mainly to accelerate forward solves; we instead exploit the tensor structure throughout the inverse solve:
\begin{enumerate}[leftmargin=2.2em,label={C\arabic*.}]
\item \emph{An inverse-problem-tailored TROM.} The reduced model approximates the para\-meter-to-ob\-ser\-vation map directly, rather than the full state.
\item \emph{Reduced-coordinate Gauss--Newton.} Beyond fast forward evaluation, the TT representation enables a reduced-coordinate formulation of the inverse problem and allows the GN quantities to be assembled directly from the low-rank structure of the parameter-to-observation map.
\item \emph{TT-based objective minimization.} We minimize the objective directly over the discrete parameter grid in TT format, yielding either a stand-alone approximate minimizer or a data-informed initial guess for a subsequent GN solve.
\item \emph{Error-to-inversion analysis.} For the  TROM inverse problem, we bound the perturbation of the recovered parameter in terms of the TROM map and Jacobian errors and local FOM stability quantities. We then estimate the two TROM errors from the tensor-compression and interpolation errors, using a max-fiber tensor norm to obtain bounds that remain meaningful under refinement of the parameter grid.
\item \emph{Comprehensive numerical study.} On two inverse problems, we assess robustness to TROM error and measurement noise, and the effects of regularization, initialization, spatial discretization, increasing parameter dimension, and nonconvex objective landscapes; compare TT-SVD with TT-Cross and TROM-based inversion with full-order inversion; quantify the benefit of TT-based optimization and initialization; and show that online TROM inversion is cheap relative to the offline surrogate construction.
\end{enumerate}

\paragraph{Other Related Work}
Surrogate models have long been used to make global optimization of expensive objectives tractable~\cite{jones1998efficient,jones2001taxonomy,shahriari2016taking}, and, closer to the present setting, low-rank tensor surrogates have been combined with global search and subsequent local refinement for parameter estimation and inverse problems~\cite{sozykin2022ttopt,krivorotko2020global,zheltkova2018tensor}. Those methods build a low-rank representation of a \emph{scalar objective}, which is tied to one data set and must be rebuilt, at the cost of a full-order solve per sampled entry, whenever the data change. Because we compress the parameter-to-observation \emph{map} instead, the objective tensor for any new observation vector is assembled in closed form from the same TT cores, so that a further inversion requires no additional full-order solves: only the reassembly of a single TT core, one rounding sweep, and the discrete search are repeated. The same distinction underlies the error analysis of \Cref{sec:error-to-inversion}, which bounds the recovered parameter in terms of the compression error of the map rather than of a data-specific objective. A complementary line of work manages the fidelity of an approximation \emph{during} a local solve rather than searching globally: trust-region and surrogate-management frameworks adapt, enrich, or certify the reduced model as the iterates move~\cite{alexandrov1998trust,booker1999rigorous,zahr2015progressive,yue2013accelerating}, with~\cite{forrester2009recent} as a survey, and certified reduced-basis versions supply a posteriori bounds for the reduced optimization problem in PDE-constrained optimization and parameter identification~\cite{dihlmann2015certified,qian2017certified,kartmann2024adaptive}. The TROM used here is instead built once, offline, and held fixed throughout the inversion.

Low-rank tensor techniques have also been used in deterministic and statistical inverse problems. For the latter, TT approximations have been used to represent high-dimensional probability densities and accelerate posterior sampling~\cite{dolgov2020approximation}. Related low-rank tensor reconstructions combined with transport maps have been developed for concentrated posterior densities arising in Bayesian inversion~\cite{eigel2022lowrank}. This line of work has been extended to deep, composed TT transport maps for high-dimensional and conditional Bayesian inference~\cite{cui2022deep,cui2023scalable}. Tensor methods have also been incorporated into ROM construction for parametric PDEs and dynamical systems, including interpolatory, projection-based, and non-intrusive TROMs~\cite{mamonov2022interpolatory,mamonov2024tensorial,mizan2026parametric,vijaywargiya2026structure,islam2026tensorial} and TT reduced-basis methods~\cite{mueller2026tensor}. Separated representations of the same kind underlie the proper generalized decomposition, which constructs low-rank approximations of parametric and time-dependent PDE solutions a priori, without requiring a precomputed sampling of the solution manifold~\cite{ladeveze2010latin,nouy2010priori}. As with the TROMs above, these representations approximate the state; here we compress the parameter-to-observation map instead. Another active direction is nonlinear manifold reduction, where encoder--decoder architectures can be advantageous for handling solution manifolds with slowly decaying Kolmogorov width~\cite{greif2019decay}; examples include convolutional-autoencoder ROMs~\cite{lee2020model}, deep-learning ROMs for parametrized PDEs~\cite{fresca2021comprehensive}, and neural-network ROMs for inverse problems and parameter identification~\cite{ivagnes2023towards}. Closely related to our use of the surrogate for Jacobian information, derivative-informed neural network surrogates of parameter-to-observation maps have been developed for outer-loop applications such as Bayesian inversion~\cite{olearyroseberry2022derivative}. Finally, several works show that inverse solvers can benefit from ROM constructions designed specifically for inference, not only from faster forward evaluations: examples include posterior-adaptive reduced bases for Bayesian inversion~\cite{cui2015datadriven} and goal-oriented inference methods that build reduced spaces around the prediction or identification task~\cite{lieberman2013goal}.

\paragraph{Notation}
Throughout, let $g : \mathcal{A}\subset\mathbb{R}^D \to \mathcal{U}$ denote the forward solution operator and let $q:\mathcal{U}\to\mathcal{Y}$ be the observation operator. Here $\mathcal{A}$ is a parameter domain, $\mathcal{U}$ and $\mathcal{Y}$ are Banach spaces. In the examples below, $g$ denotes the solution operator of either a parametric PDE or a parametric ODE system. The parameter-to-observation map is $f \defeq q\circ g$, $f:\mathcal{A}\to\mathcal{Y}$. Given noisy data $y_{\mathrm{obs}}\in\mathcal{Y}$, we assume $y_{\mathrm{obs}} = f(\alpha_{\mathrm{true}})+\varepsilon_{\mathrm{meas}}$, where $\varepsilon_{\mathrm{meas}}$ denotes measurement noise. In the inversion procedure, the full-order map $f$ is replaced by a tensor reduced-order approximation $\widehat f$, which introduces a TROM error $\varepsilon_{\mathrm{rom}}$, $\widehat f(\alpha) = f(\alpha)+\varepsilon_{\mathrm{rom}}(\alpha)$. Thus, the effective data misfit contains both measurement and ROM contributions. This distinction is used later in the choice of discrepancy targets and regularization parameters.

\paragraph{Outline}
The remainder of the paper is organized as follows.  \Cref{sec:ROM} describes the construction of the TROM, including TT-SVD and TT-Cross compression and fast online surrogate evaluation. \Cref{sec:Inv} presents the ROM-based inversion procedure. It also describes a deeper integration of the TROM into the inverse solve through reduced-coordinate TROM minimization, efficient assembly of the GN quantities, and the TROM-based construction of an initial guess.
\Cref{sec:error-to-inversion} develops the error-to-inversion estimate and bounds the required uniform TROM map and Jacobian errors in terms of interpolation and tensor-compression quantities.
\Cref{s:methods} formulates two examples of the parametric forward and inverse problems.
\Cref{s:results} reports the numerical study, and \Cref{s:conclusions} summarizes the main findings and discusses limitations and future directions.

\section{Reduced Order Model}\label{sec:ROM}

We construct a ROM directly for the parameter-to-observation map $f = q \circ g$, rather than for the full state associated with the underlying forward problem. This choice is natural for inverse problems in which the optimization procedure requires only repeated evaluations of $f(\alpha)$ and its parameter sensitivities.

After a suitable discretization, we assume that the observation space $\mathcal{Y}$ is finite-dimensional with dimension $N_{\mathrm{obs}}$, and write $f:\mathcal{A}\to \mathbb{R}^{N_{\mathrm{obs}}}$. We further assume that the admissible parameter set is a box, $\mathcal{A} = \bigotimes_{i=1}^D [\alpha^{\mathrm{min}}_i,\alpha^{\mathrm{max}}_i]\subset\mathbb R^D$.

\subsection{Offline Part}

The set of all possible observations, i.e., the image of $\mathcal{A}$ under $f$, is a parametric manifold in $\mathbb{R}^{N_{\mathrm{obs}}}$. We start with a discrete full-order representation of this manifold. To this end, we first discretize the parameter domain $\mathcal{A}$ using a Cartesian grid $\hcA$: we distribute $N_i$ nodes $\{\halpha_i^j\}_{j=1,\dots,N_i}$ within each of the intervals $[\alpha_i^{\mathrm{min}}, \alpha_i^{\mathrm{max}}]$ for $i=1,\dots,D$, and let
\begin{equation}
\label{eqn:grid}
\hcA = \left\{ \halpha = (\halpha_1,\dots,\halpha_D)^\mathsf{T}\,:\,
\halpha_i \in \{\halpha_i^n\}_{n=1}^{N_i}, ~ i = 1,\dots,D \right\} .
\end{equation}

\noindent All observations for the parameters in $\hcA$ are organized in the \emph{multi-dimensional} array
\begin{equation}\label{eqn:snapmulti}
\mathcal{T}(\,:\,,n_1,\dots,n_D) = f(\halpha_1^{n_1},\dots,\halpha_D^{n_D}),
\end{equation}

\noindent which is a tensor of order $D+1$ and size $N_{\mathrm{obs}} \times N_1 \times \dots \times N_D$. The first mode corresponds to the discretized observation vector, while the remaining modes correspond to the parameter grid. Thus, we may consider the $(D+1)$-way tensor $\mathcal{T} \in \mathbb{R}^{N_{\mathrm{obs}} \times N_1 \times \cdots \times N_D}$, as a discrete representation of the parametric observation manifold.

The tensor $\mathcal{T}$ is a full-order object, and for ROM purposes we seek a compressed (reduced-order) approximation of $\mathcal{T}$. In this paper, such an approximation is constructed in the TT format: $\mathcal{T} \approx \widehat{\mathcal{T}}$, where
\begin{equation}\label{TT}
\widehat{\mathcal{T}}(n_0,n_1,\ldots,n_D)
=
G_{1}(1,n_0,\,:\,)\,
G_{2}(\,:\,,n_1,\,:\,)\cdots
G_{D+1}(\,:\,,n_D,1),
\end{equation}

\noindent $n_i=1,\dots, N_i,\; i=0, \dots, D$, with $N_0 = N_{\mathrm{obs}}$ and third-order (3-way) core tensors  $G_{1}, G_{2}, \ldots, G_{D+1}$ such that $G_{k} \in \mathbb{R}^{r_k \times N_{k-1} \times r_{k+1}}$. The product of the $G$-terms in \eqref{TT} is understood as a product of $r_k \times r_{k+1}$ matrices. The TT-ranks $r_1 = 1,$ $r_2, \ldots, r_{D+1},$ $r_{D+2} = 1$ define the compression. We shall also write a TT representation~\eqref{TT} using the compact notation
\[
\widehat{\mathcal{T}} = G_{1}\circ G_{2} \circ\dots\circ G_{D+1}.
\]

To find a suitable TT-tensor $\widehat{\mathcal{T}}$, we adopt two approaches.
The first requires assembling the complete tensor $\mathcal{T}$, followed by a truncated singular value decomposition (SVD) applied to a sequence of matricizations of the tensor (TT-SVD approach).
In the second approach, a compressed representation of $\mathcal{T}$ is computed using the TT-Cross interpolation procedure, which in general requires access only to a (small) subset of the entries of $\mathcal{T}$.
The latter avoids expensive offline computations.

\subsubsection{TT-SVD for Order Reduction}

In this approach, the TT tensor $\widehat{\mathcal{T}}$ is constructed by computing truncated SVDs of a sequence of matricizations of $\mathcal{T}$, i.e., matrices of size $(N_0 \cdots N_k) \times (N_{k+1} \cdots N_{D})$ obtained by unfolding $\mathcal{T}$; see~\cite{oseledets2011tensor}. If the truncation criterion in each SVD is defined by the threshold $\varepsilon_{\mathrm{TT}}/\sqrt{D}$, $\varepsilon_{\mathrm{TT}} > 0$, on the relative magnitude of the singular values, then the resulting TT tensor satisfies
\begin{equation}\label{TTapprox}
\|{\mathcal{T}} - \widehat{\mathcal{T}}\|_F \le \varepsilon_{\mathrm{TT}} \|{\mathcal{T}}\|_F.
\end{equation}

\noindent Note that TT-SVD generally requires access to all entries of $\mathcal{T}$. For high parameter dimensions, computing the full tensor $\mathcal{T}$ may be prohibitively expensive.

\subsubsection{TT-Cross}\label{sec:Cross}
The TT-Cross method constructs a TT approximation $\widehat{\mathcal{T}}$ of $\mathcal{T}$ by adaptively sampling only a small subset of its entries, thereby avoiding full tensor assembly. It combines ideas from TT-Cross interpolation~\cite{oseledets2010tt} with alternating minimal energy (AMEn) enrichment~\cite{dolgov2014alternating}, iteratively updating TT cores while selecting informative multi-index sets via max-volume principles~\cite{goreinov2010submatrix,savostyanov2014quasioptimality}. We employ the \texttt{amen\_cross} routine from~\cite{dolgov2015polynomial}: at each iteration, local low-rank approximations are enriched by residual-based directions, and a prescribed tolerance $\varepsilon_{\mathrm{TTC}} > 0$ controls the stopping criterion and adaptive rank growth, so the ranks are determined automatically to meet the requested tolerance.

To reduce the offline computational cost, one may precede TT-Cross by a preprocessing step that reduces the first dimension, $N_{\mathrm{obs}}$, of the tensor to be approximated. One way of doing this is to find a matrix $P\in\mathbb{R}^{N_0\times N_{\mathrm{obs}}}$, with reduced dimension $N_0<N_{\mathrm{obs}}$, such that
\begin{equation}\label{eq:P}
\sup_{\halpha\in\hcA} \frac{\|(I-P^\mathsf{T}P)f(\halpha)\|}{\|f(\halpha)\|} \le
\varepsilon_{\mathrm{red}},
\end{equation}

\noindent for a sufficiently small tolerance $\varepsilon_{\mathrm{red}} > 0$. Here, we assume that the rows of $P$ are orthonormal. In practice, $P$ is constructed by a truncated SVD of a training subset of observations; see \Cref{sec:Pre}.  Alternatively, one may learn a reduced latent representation with a nonlinear encoder--decoder architecture.

The dimension reduction is performed by mapping the observations into $\mathbb{R}^{N_0}$, i.e., $f(\halpha) \mapsto P f(\halpha)$. This enables TT-Cross interpolation to be applied to a tensor of size
$N_0 \times N_1 \times \cdots \times N_D$,
instead of the original tensor of size
$N_{\mathrm{obs}} \times N_1 \times \cdots \times N_D$.

The resulting TT-Cross surrogate thus combines two levels of compression: \begin{enumerate*}[label=\it (\roman*)] \item a low-rank reduction in the observation space and \item a tensor-train approximation of the reduced coefficient map over the parameter space\end{enumerate*}. The TT tensor of the original size
$N_{\mathrm{obs}} \times N_1 \times \cdots \times N_D$
can be reconstructed by applying a tensor--matrix product with $P^\mathsf{T}$ along the first mode.

To simplify notation, we assume
\[
N_0=N_{\mathrm{obs}}
\]

\noindent and let $\widehat{\mathcal T}$ denote the TT tensor of size $N_{\mathrm{obs}} \times N_1 \times \cdots \times N_D$.

\subsubsection{Preprocessing step}\label{sec:Pre}

This is done with the help of SVD on a representative collection of observations. To this end, we randomly select training and test subsets of points from the discrete parameter domain:
\[
\mathcal{I}_{\mathrm{train}} \subset \hcA,
\quad
P_{\mathrm{train}} \defeq |\mathcal{I}_{\mathrm{train}}|,
\qquad
\mathcal{I}_{\mathrm{test}} \subset \hcA,
\quad
P_{\mathrm{test}} \defeq |\mathcal{I}_{\mathrm{test}}|.
\]

By solving the forward problem for each parameter in $\mathcal{I}_{\mathrm{train}}$, we assemble a matrix of observations
$
S \in \mathbb{R}^{N_{\mathrm{obs}} \times P_{\mathrm{train}}},
$
and compute a truncated SVD of $S$ with prescribed tolerance $\varepsilon_{\mathrm{red}}$, $S \approx U_S \Sigma V_S^\mathsf{T}$, $U_S \in \mathbb{R}^{N_{\mathrm{obs}} \times N_0}$. We let $P= U_S^\mathsf{T}$.  To ensure that $P$ satisfies \eqref{eq:P}, we validate the inequality on the held-out set $\mathcal{I}_{\mathrm{test}}$ and enrich the training set if necessary.

\subsection{Online Part}

Next, we describe the online evaluation of the TT-ROM model outlined above.

\subsubsection{ROM Evaluation}
\label{sec:core_contraction}

To define the fast (reduced-dimension) evaluation of the parameter-to-observation mapping $f = q \circ g$, we follow~\cite{mamonov2022interpolatory} and assume an interpolation procedure
\begin{equation}\label{eqn:bea}
  \chi_i \,:\, [\alpha_i^{\mathrm{min}}, \alpha_i^{\mathrm{max}}] \to \mathbb{R}^{N_i},\quad i=1,\dots,D,
\end{equation}

\noindent such that for any continuous function $g : \mathcal{A} \to \mathbb{R}$,
\begin{equation}\label{Interp}
  I(g)(\alpha)= \sum_{n_1=1}^{N_1}\cdots\sum_{n_D=1}^{N_D}
\big(\chi_1(\alpha_1)\big)_{n_1}
\cdots
\big(\chi_D(\alpha_D)\big)_{n_D}
  g\big(\halpha_1^{n_1},\ldots,\halpha_D^{n_D}\big)
\end{equation}

\noindent defines an interpolant of $g$.

One straightforward choice is Lagrange interpolation of order $p$. For any $\alpha_i \in [\alpha_i^{\mathrm{min}}, \alpha_i^{\mathrm{max}}]$, let $\widehat{\alpha}_i^{i_1}, \ldots, \widehat{\alpha}_i^{i_p}$ be the $p$ closest grid nodes to $\alpha_i$ on $[\alpha_i^{\mathrm{min}}, \alpha_i^{\mathrm{max}}]$, for $i=1,\ldots,D$. Then
\begin{equation}
\label{eqn:lagrange}
\big(\chi_i (\alpha_i)\big)_j =
\begin{cases}
  \displaystyle
  \prod\limits_{\substack{m = 1 \\ m \neq k}}^{p}
  \frac{\widehat{\alpha}_i^{i_m}-\alpha_i}{\widehat{\alpha}_i^{i_m}-\widehat{\alpha}_i^{j}},
  & \text{if } j = i_k \in \{i_1,\ldots,i_p\}, \\
  0, & \text{otherwise},
\end{cases}
\end{equation}

\noindent for $j=1,\dots,N_i$. Other suitable choices include continuous piecewise-polynomial interpolants and splines, depending on the desired degree of smoothness.

With the help of~\eqref{eqn:bea}, we define the ROM approximation to the observations as
\begin{equation}
\label{eqn:extractbt}
\widehat{f}(\alpha) = \widehat{\mathcal{T}}
  \times_2 \chi_1(\alpha_1) \times_3 \chi_2(\alpha_2) \cdots \times_{D+1} \chi_D(\alpha_D)
  \in \mathbb{R}^{N_{\mathrm{obs}}},
\end{equation}

\noindent where $\times_k$ denotes the $k$-mode tensor--vector product~\cite{kolda2009tensor}.

If $\alpha \in \hcA$ belongs to the sampling set, then $\chi_i(\alpha_i)$ encodes the position of $\alpha_i$ among the grid nodes on $[\alpha^{\mathrm{min}}_i, \alpha^{\mathrm{max}}_i]$. In this case, $\widehat{f}(\alpha)$ coincides with the observation vector for the given $\alpha$, up to the tensor compression error. For a general $\alpha\in \mathcal{A}$, the vector $\widehat{f}(\alpha)$ is obtained by interpolation of precomputed observations. An important point is that the computations in~\eqref{eqn:extractbt} can be carried out directly in a low-rank format in a fast and efficient manner, as explained below.

Given the TT cores $G_i$ and the interpolation procedures $\chi_i$, let us define the $\alpha$-specific \emph{core vector} $c_\chi(\alpha) \in \mathbb{R}^{r_2}$ as
\begin{equation}
\label{eqn:C_TT}
c_\chi (\alpha) = \prod_{i=1}^{D}  G_{i+1} \times_2 \chi_{i} (\alpha_i).
\end{equation}

\noindent Using the definition of the $k$-mode product, one finds that $\widehat{f}$ from~\eqref{eqn:extractbt}
can be computed as:
\begin{equation}
\label{eqn:f_TT}
\widehat{f} (\alpha) = G_1\, c_\chi(\alpha),
\end{equation}

\noindent where $G_1$ is interpreted as an $N_{\mathrm{obs}}\times r_2$ matrix rather than a $1\times N_{\mathrm{obs}}\times r_2$ tensor.

All computations in \eqref{eqn:C_TT} and \eqref{eqn:f_TT} involve only low-dimensional objects. Moreover, the vectors $\chi_i (\alpha_i)$ have only a few (in our case, $p$) nonzero entries, which makes the online evaluation fast and efficient. \emph{In summary, the surrogate is evaluated by local interpolation in each parameter direction followed by contraction of the TT cores, without reconstructing the full tensor.}

\section{Numerical Inversion and TROM integration} \label{sec:Inv}

We consider a classical variational approach: Given a vector of observations $y_{\mathrm{obs}}$, find geometric parameters $\alpha\in\mathbb{R}^D$ by minimizing the regularized least-squares functional over the parameter domain $\mathcal{A}\subset\mathbb{R}^D$:
\begin{equation}
\minopt_{\alpha\in\mathcal{A}}\left\{\tfrac{1}{2}\,\,
\|f(\alpha)-y_{\mathrm{obs}}\|_2^2 + \tfrac{\lambda}{2}\|\alpha-\alpha_{\mathrm{ref}}\|_2^2\right\}.
\label{eq:inverse_problem}
\end{equation}

\noindent The first term measures the discrepancy between the model prediction $f(\alpha)$ for a candidate $\alpha$ and the observations $y_{\mathrm{obs}}$, while the second term is a Tikhonov-type regularization; $\alpha_{\mathrm{ref}}\in\mathbb{R}^D$ is a reference parameter (prior mean), and $\lambda>0$ is the regularization parameter. Notice that we eliminate the constraint~\eqref{eq:heat} from the formulation and insert the parameter-to-observation map $f$ into the data mismatch term (reduced formulation). The map $f$ is evaluated either by the FOM or by the TROM surrogate ($f = \widehat f$ in this case), depending on the setting.

Problem~\eqref{eq:inverse_problem} is solved by a projected GN method. At iteration $k$, given the current iterate $\alpha_k$, we define the residual
$\eta_k = y_{\mathrm{obs}}-f(\alpha_k)$
and linearize the forward map:
\[
f(\alpha_k + s) \approx f(\alpha_k)+J_k s,
\qquad
J_k=\frac{\partial f}{\partial \alpha}(\alpha_k),
\]

\noindent for arbitrary $s \in \mathbb{R}^D$. Substituting this approximation into \eqref{eq:inverse_problem} gives the GN system
\begin{equation}
\bigl(J_k^\mathsf{T} J_k+\lambda I\bigr)s_k
=
J_k^\mathsf{T} \eta_k-\lambda(\alpha_k-\alpha_{\mathrm{ref}}).
\label{eq:gn_system}
\end{equation}

\noindent We solve~\eqref{eq:gn_system} for the search direction $s_k \in \mathbb{R}^D$; the trial update is then projected onto the admissible box:
$\alpha_{k+1} \gets \Pi_{\mathcal{A}}(\alpha_k+t_ks_k)$,
where the stepsize $t_k>0$ is chosen by Armijo backtracking to ensure sufficient decrease of the regularized objective~\cite[p.~33]{nocedal2006numerical}.

The iterations are terminated if we do not make sufficient progress between iterations: We stop the iteration if
\[
\frac{\|\alpha_{k+1}-\alpha_k\|_2}{\max(1,\|\alpha_k\|_2)} \le \varepsilon_{\mathrm{stop}},
\]

\noindent or the relative decrease in the objective satisfies
\[
\frac{|\Phi_k-\Phi_{k+1}|}{\max(1,\Phi_k)} \le \varepsilon_{\mathrm{stop}},\quad \text{with}~
\Phi_k = \tfrac{1}{2}\|f(\alpha_k)-y_{\mathrm{obs}}\|_2^2 + \tfrac{\lambda}{2}\|\alpha_k-\alpha_{\mathrm{ref}}\|_2^2.
\]

\noindent We select $\varepsilon_{\mathrm{stop}}=10^{-8}$ for all our experiments.

In the TROM setting, the matrices $J_k^\mathsf{T} J_k$ and vectors $J_k^\mathsf{T}\eta_k$ in~\eqref{eq:gn_system} are assembled using the TT contraction formulas for surrogate evaluation and implicit Jacobian construction described above, without forming a full Jacobian in the observation space (see \Cref{s:trom-jacobian}).
The regularization parameter $\lambda$ is chosen based on the discrepancy principle~\cite{engl1996regularization,hansen2010discrete,kaipio2005statistical} (see also \Cref{sec:regularization-parameter}).

\subsection{Reduced TROM objective}
In the TROM inversion, the forward map in the minimization problem~\eqref{eq:inverse_problem} is given by the surrogate $\widehat f$ in~\eqref{eqn:f_TT}. Consistently with~\eqref{eqn:f_TT}, we consider $G_1\in \mathbb{R}^{N_{\mathrm{obs}}\times r_2}$ as the matrix rather than $1\times N_{\mathrm{obs}}\times r_2$ tensor.

Without loss of generality, we assume that the columns of $G_1$ are orthonormal. For TT-SVD this is obtained when the TT tensor is left-orthogonalized; for a general TT representation, including TT-Cross, it can be enforced by a QR factorization of $G_1$ and absorption of the triangular factor into the second TT core.

Let $\Pi = G_1(G_1)^\mathsf{T}$, $\Pi^\perp = I-\Pi$, be the orthogonal projectors onto the range of $G_1$ and its orthogonal complement, respectively. Since $\widehat f(\alpha)\in\operatorname{range}(G_1)$ for all $\alpha$, we have the orthogonal decomposition
\[
\|\widehat f(\alpha)-y_{\mathrm{obs}}\|_2^2
=
\|\widehat f(\alpha)-\Pi y_{\mathrm{obs}}\|_2^2
+
\|\Pi^\perp y_{\mathrm{obs}}\|_2^2.
\]

\noindent The second term is independent of $\alpha$ and therefore does not affect the minimizer. Moreover, using the orthonormality of the columns of $G_1$,
\[
\|\widehat f(\alpha)-\Pi y_{\mathrm{obs}}\|_2^2
=
\|G_1c_\chi(\alpha)
-
G_1(G_1)^\mathsf{T}y_{\mathrm{obs}}\|_2^2
=
\|c_\chi(\alpha)-y_{\mathrm{red}}\|_2^2,
\]

\noindent where  $c_\chi(\alpha)$ is defined in \eqref{eqn:C_TT}, and
\begin{equation}\label{eq:y_red}
y_{\mathrm{red}}
=
(G_1)^\mathsf{T}y_{\mathrm{obs}}
\in\mathbb R^{r_2}.
\end{equation}

\noindent Hence, for the TROM forward solve, the minimization problem~\eqref{eq:inverse_problem} is equivalent to the reduced problem
\begin{equation}
\minopt_{\alpha\in\mathcal{A}}\,\,
\left\{
\tfrac{1}{2}
\|c_\chi(\alpha)-y_{\mathrm{red}}\|_2^2
+
\tfrac{\lambda}{2}\|\alpha-\alpha_{\mathrm{ref}}\|_2^2
\right\}.
\label{eq:inverse_problem_TROM}
\end{equation}

This has an important computational implication: once $y_{\mathrm{red}}$ is computed, the GN iteration runs in the reduced coordinate space of dimension $r_2$ rather than the full observation space of dimension $N_{\mathrm{obs}}$. In particular, the reduced residual is $y_{\mathrm{red}} - c_\chi(\alpha_k)\in\mathbb R^{r_2}$, and the corresponding reduced Jacobian has size $r_2\times D$. Thus, after projecting the full-order observation vector in \eqref{eq:y_red}, the online linear-algebra operations depend on the second TT rank $r_2$, which can be much smaller than $N_{\mathrm{obs}}$.

\subsection{Regularization parameter selection}\label{sec:regularization-parameter}

For the regularization parameter selection we consider the discrepancy principle. We construct the observations $y_{\mathrm{obs}}$ using forward simulations. We differentiate between noise contributions due to measurement noise and due to ROM approximation errors. We refer to \Cref{sec:discrepancy} for additional details.

\subsection{TROM Jacobian and Gauss–Newton quantities}\label{s:trom-jacobian}

We now describe the Jacobian of the reduced map $c_\chi$ appearing in~\eqref{eq:inverse_problem_TROM}. Since $\widehat f(\alpha)=G_1c_\chi(\alpha)$ and the columns of $G_1$ are orthonormal, \(c_\chi(\alpha)=G_1^\mathsf{T}\widehat f(\alpha)\). The TROM allows for its efficient computation  by exploiting the contraction structure in~\eqref{eqn:C_TT}. This provides the sensitivity information required for numerical optimization, while ensuring that the computational cost depends on the TT ranks and the reduced dimensions, rather than on the full observation dimension. Indeed, \eqref{eqn:C_TT} and~\eqref{eqn:f_TT} yield
\begin{equation}
\label{eqn:Jac}
\frac{\partial \widehat{f}}{\partial \alpha_d}(\alpha)
=
G_1
\left(
\prod_{i=1}^{d-1} G_{i+1} \times_2 \chi_i(\alpha_i)
\right)
G_{d+1} \times_2 \chi'_d(\alpha_d)
\left(
\prod_{i=d+1}^{D} G_{i+1} \times_2 \chi_i(\alpha_i)
\right),
\end{equation}

\noindent for $d = 1, \dots, D$. Like $\chi_d(\alpha_d)$, the vector $\chi'_d(\alpha_d)$ has only a few nonzero entries, computed explicitly from~\eqref{eqn:lagrange}.

To evaluate~\eqref{eqn:Jac} more efficiently, we introduce prefix and suffix products. Let
$B_d(\alpha_d) = G_{d+1} \times_2 \chi_d(\alpha_d)$,
$\dot{B}_d(\alpha_d) = G_{d+1} \times_2 \chi'_d(\alpha_d)$,
and define
\begin{equation}
\label{eqn:interface}
B^{<d}(\alpha)=\prod_{i<d}B_i(\alpha_i),
\qquad
B^{>d}(\alpha)=\prod_{i>d}B_i(\alpha_i),
\qquad d = 1,\dots,D,
\end{equation}

\noindent with the empty products $B^{<1}=I_{r_2}$ and $B^{>D}=1$, evaluated by the recursions $B^{<d+1}=B^{<d}B_d(\alpha_d)$ and $B^{>d-1}=B_d(\alpha_d)B^{>d}$.
With this notation, we obtain
\[
\frac{\partial c_\chi}{\partial \alpha_d}(\alpha)
=
B^{<d} \dot{B}_d(\alpha_d) B^{>d}
\in \mathbb{R}^{r_2},
\qquad d = 1, \dots, D.
\]

We estimate the cost as follows. Once the matrices $B_d(\alpha_d)$ are formed, the prefix and suffix products $B^{<d}$, $B^{>d}$ are computed once and reused across parameter directions. If the TT ranks are bounded by $r_{\mathrm{max}}\in\mathbb{N}$, then forming all prefixes and suffixes costs $O(D r_{\mathrm{max}}^3)$. For each parameter direction, assembling one Jacobian column costs $O(r_{\mathrm{max}}^{{2}})$. Therefore, the total cost of assembling the ``reduced'' Jacobian
\[
J = J(\alpha) =
\left[
\frac{\partial c_\chi}{\partial \alpha_1}(\alpha),
\dots,
\frac{\partial c_\chi}{\partial \alpha_D}(\alpha)
\right]
\in  \mathbb{R}^{r_2 \times D}.
\]

\noindent is simply $O\left(D r_{\mathrm{max}}^3\right)$.

If explicit assembling of the Jacobian of the surrogate model is the goal, then one may proceed with
$\frac{\partial \widehat{f}}{\partial \alpha_d}(\alpha)
= G_1
\frac{\partial c_\chi}{\partial \alpha_d}(\alpha)$,
$d = 1, \dots, D$. In this case assembling one Jacobian column costs $O(N_{\mathrm{obs}}r_2+r_{\mathrm{max}}^{{2}})$, so that the total cost increases to $O(D r_{\mathrm{max}}^{3}+D N_{\mathrm{obs}}r_2)$.

In the GN method the Jacobian enters through the quantities $J^\mathsf{T}J$ and $J^\mathsf{T}\eta$, which adds $O\left(r_2 D^2\right)$ and $O\left(r_2 D\right)$, respectively. This brings us to the total cost per iteration\footnote{For the original minimization problem~\eqref{eq:inverse_problem} the TROM  alone still allows one to reduce the cost per iteration to $O\left(
D r_{\mathrm{max}}^3
+
N_{\mathrm{obs}}r_2
+
D^2r_2
\right)$ through assembling of $J^\mathsf{T}J$ and $J^\mathsf{T}\eta$ directly using representation~\eqref{eqn:Jac} avoiding the explicit formation of the Jacobian matrix.}
\begin{equation}\label{eqn:cost}
O\left(D r_{\mathrm{max}}^3 + r_2 D^2\right).
\end{equation}

This can be contrasted with the standard finite-difference (FD) approximation of the Jacobian in the full observation space. For example, in the interior of the parameter domain, the $d$-th Jacobian column can be approximated by
\[
\frac{\partial \widehat{f}(\alpha)}{\partial \alpha_d}
\approx
\frac{\widehat{f}(\alpha + h_d e_d) - \widehat{f}(\alpha - h_d e_d)}{2 h_d},
\qquad d = 1, \dots, D.
\]

\noindent Thus, finite differences require additional model evaluations for each parameter direction. By the same reasoning as above, one evaluation of $\widehat{f}$ has cost $O(D r_{\mathrm{max}}^2 + N_{\mathrm{obs}} r_{2})$. Therefore, the standard FD approach to Jacobian computation incurs a higher computational cost $O(D^2 r_{\mathrm{max}}^2 + D N_{\mathrm{obs}} r_2)$.

Computing the Gauss–Newton quantities $J^\mathsf{T}J$ and $J^\mathsf{T}\eta$ would cost additional $O(D^2 N_{\mathrm{obs}})$ and $O(D N_{\mathrm{obs}})$ operations. Adding it to the cost of forming the Jacobian  we find that the total cost of forming the GN system would be
\begin{equation}\label{eq:cost1}
O(D^2 (r_{\mathrm{max}}^2 +N_{\mathrm{obs}}) + D N_{\mathrm{obs}} r_2).
\end{equation}

\noindent Depending on the TT ranks, parametric and objective spaces dimensions this may become significantly higher than~\eqref{eqn:cost}.

\subsection{TROM optimization and initialization}
\label{sec:tt_initial_guess}

The TROM representation can be used either to minimize the objective functional directly over the discrete parameter domain or to construct a data-informed initial guess for a subsequent iterative inversion method. The preferable use is problem-dependent: the tensor-based minimizer may already provide an accurate reconstruction, or it may serve only to initialize a local method such as GN.

At this stage, we do not search over the full admissible parameter space $\mathcal A$. Instead, we minimize the TROM objective over the discrete parameter domain $\hcA$ defined in~\eqref{eqn:grid}. Specifically, for a given observation $y_{\mathrm{obs}}\in\mathbb R^{N_{\mathrm{obs}}}$, we define
\begin{equation}\label{initialGuess}
\alpha_\ast
= \argmin_{\halpha\in\hcA}\,\left\{\tfrac{1}{2}\,\|\widehat{f}(\halpha)-y_{\mathrm{obs}}\|_2^2 +  \tfrac{\lambda}{2}\|\halpha-\alpha_{\mathrm{ref}}\|_2^2\right\}.
\end{equation}

\noindent The parameter $\alpha_\ast$ can then be used either as the TROM-based reconstruction itself or as the initial guess $\alpha^{(0)}=\alpha_\ast$ for a subsequent GN iteration.

Naively, one could evaluate the objective in~\eqref{initialGuess} at every parameter value in $\hcA$. Instead, we exploit that the TROM provides a low-rank representation of all $f(\alpha)$, $\alpha\in\hcA$, and that the Tikhonov term is a sum of univariate (quadratic) functions. To describe the minimization procedure, consider the set of multi-indices
$\mathcal I
=
\{1,\ldots,N_1\}\times\cdots\times\{1,\ldots,N_D\}$.
For each $\mathbf n=(n_1,\ldots,n_D)\in\mathcal I$, we denote the corresponding parameter vector by
\[
\alpha_{\mathbf n}
=
\bigl(
\halpha_1^{\,n_1},\ldots,\halpha_D^{\,n_D}
\bigr)^{\mathsf{T}}
\in \widehat{\mathcal A}.
\]

\noindent Next, define the residual and regularization tensors,
\[
\mathcal R(\mathbf n)
= \tfrac{1}{2}\|\widehat{f}(\alpha_{\mathbf n})-y_{\mathrm{obs}}\|_2^2,
\quad
\mathcal L(\mathbf n)
= \tfrac{\lambda}{2}\|\alpha_{\mathbf n}-\alpha_{\mathrm{ref}}\|_2^2,
\quad
\mathcal R, \mathcal L\in\mathbb{R}^{N_1\times\cdots\times N_D}.
\]

\noindent Solving~\eqref{initialGuess} is therefore equivalent to finding the minimum entry of $\mathcal J = \mathcal R  +\mathcal L$. To efficiently solve this minimization problem, we first represent $\mathcal J$ in TT format using elementary multilinear algebra as explained below.

We start with $\mathcal R$. Consider the rank-one tensor
\[
\mathcal S
=
y_{\mathrm{obs}}\otimes \mathbf{1}_{N_1}\otimes\cdots\otimes \mathbf{1}_{N_D},
\]
\noindent where $\mathbf{1}_{N}=(1,\dots,1)^\mathsf{T}\in\mathbb{R}^{N}$,
and define
\[
\mathcal{E} \defeq \widehat{\mathcal T}-\mathcal S.
\]
The residual tensor can be written as
\begin{equation}\label{eq:R}
	\mathcal R=\tfrac12(\mathcal E\odot\mathcal E)\times_1\mathbf{1}_{N_{\mathrm{obs}}},
\end{equation}
where $\odot$ is the Hadamard product.

Since we are interested in a TT representation of $\mathcal R$, we first obtain the TT decomposition of $\mathcal{E}$ using the standard TT addition construction based on block-diagonal augmentation of the TT cores; see, e.g., \cite{oseledets2011tensor}.
Let
\[
\mathcal{E} = H_1\circ H_2\circ\cdots\circ H_{D+1},
\]
where
$H_k\in\mathbb{R}^{\bar r_k\times N_{k-1}\times\bar r_{k+1}}$ are the TT cores, with ranks $\bar r_k\le r_k+1$.

The TT cores of the Hadamard product in~\eqref{eq:R} are obtained by taking, for each physical index, the Kronecker product of the corresponding TT-core slice with itself. To make this more precise, let us use the following indexing convention: for $H_k\in\mathbb{R}^{\bar r_k\times N_{k-1}\times\bar r_{k+1}}$, write $a\in\{1,\ldots,\bar r_k\}$ for its \emph{left} rank index and $b\in\{1,\ldots,\bar r_{k+1}\}$ for its \emph{right} rank index. Squaring duplicates each rank index into a pair: $a$ becomes $(a,a')$ with both $a,a'\in\{1,\ldots,\bar r_k\}$, and similarly $b$ becomes $(b,b')$. We treat the pair $(a,a')$ as a \emph{single composite index} taking $\bar r_k^{2}$ values, and likewise $(b,b')$ for the right rank. Then, for $k=2,\ldots,D+1$, define
\[
\widetilde{H}_k\bigl((a,a'),n_{k-1},(b,b')\bigr)
= H_k(a,n_{k-1},b)H_k(a',n_{k-1},b'),
\]

\noindent so $\widetilde{H}_k\in\mathbb{R}^{\bar r_k^{2}\times N_{k-1}\times\bar r_{k+1}^{2}}$.

In \eqref{eq:R}, one contracts the Hadamard product along mode-$1$ with $\mathbf{1}_{N_{0}}$. To this end, treating $H_1$ as the $N_{\mathrm{obs}}\times \bar r_2$ matrix, define
\[
W = H_1^{\mathsf{T}}H_1, \qquad  W\in \mathbb{R}^{\bar r_2\times\bar r_2}.
\]
Stacking $W$ over the composite pair index and including the factor $\tfrac{1}{2}$ from \eqref{eq:R} gives a vector of length $\bar r_2^{2}$:
\[
\mathbf{w}=\tfrac{1}{2}\operatorname{vec} \bigl(H_1^{\mathsf{T}}H_1\bigr) \in\mathbb{R}^{\bar r_2^{2}}.
\]

\noindent The purpose of $\operatorname{vec}$ is to flatten the matrix indices $(a,a')$ into the same composite rank index that $\widetilde{H}_2$ carries on its left. Next, we absorb $\mathbf{w}$ into $\widetilde{H}_2$ by contracting along that composite index:
\[
\widehat{H}_2 = \widetilde{H}_2\times_1 \mathbf{w}.
\]

\noindent The result is a TT representation of the order-$D$ tensor $\mathcal{R}$:
\begin{equation}\label{eq:R2}
	\mathcal{R} =\widehat{H}_2\circ\widetilde {H}_3\circ\cdots\circ\widetilde{H}_{D+1},
\end{equation}

\noindent with unrounded TT ranks $\{1,\bar r_3^{2},\bar r_4^{2},\ldots,\bar r_{D+1}^{2},1\}$.
Since the squared ranks may be a loose upper bound, we perform a TT-rounding sweep on the decomposition~\eqref{eq:R2} (see \cite{oseledets2011tensor}) to compress the ranks before executing a minimal entry search.

Since $\mathcal L$ is given by a sum of univariate grid functions, it admits an exact tensor-train representation with TT-ranks
$
(1,2,\dots,2,1).
$
To see this, define the  grid function
\begin{equation*}
	g_i(n_i)  =  \left(\halpha_i^{n_i}-(\alpha_{\mathrm{ref}})_i\right)^2,
	\qquad n_i=1,
	\dots,N_i,
\end{equation*}

\noindent for each mode \(i=1,\dots,D\). The first and last cores are given by
\begin{equation*}
	L_1(n_1)= \frac{\lambda}{2}\begin{bmatrix}
		g_1(n_1) & 1
	\end{bmatrix} \in \mathbb R^{1\times 2},\qquad
	L_D(n_D)= \begin{bmatrix}
		1\\
		g_D(n_D)
	\end{bmatrix} \in \mathbb R^{2\times 1},
\end{equation*}

\noindent and the intermediate cores are
\begin{equation*}
	L_i(n_i)= \begin{bmatrix}
		1 & 0\\
		g_i(n_i) & 1
	\end{bmatrix} \in \mathbb R^{2\times 2}, \quad 2\le i\le D-1.
\end{equation*}

\noindent Straightforward calculations show
\begin{equation*}
	\mathcal L (\mathbf{n}) =\mathcal L(n_1,\dots,n_D) =L_1(n_1)\cdots L_D(n_D)
	= \frac{\lambda}{2} \sum_{i=1}^{D} g_i(n_i),
\end{equation*}

\noindent and thus we have TT representation of the regularization tensor $\mathcal L = L_1\circ L_2 \circ \cdots \circ L_D$.

The TT decomposition of $\mathcal{R} +\mathcal{L}$ is obtained using the standard TT addition procedure one more time.

Once $\mathcal J$ is in TT format, we use the optimization algorithm of~\cite{chertkov2022optimization} to locate its minimum entry; see also the related maximum-volume TT optimizer of~\cite{sozykin2022ttopt}, which searches a \emph{black-box} scalar objective on a quantized TT grid and is derivative free.  The essential difference here is that the tensor being searched is \emph{constructed} from the TT cores of $\widehat{\mathcal T}$ rather than sampled: evaluating its entries requires no further FOM solves, and the same cores also supply $\widehat f$ and its Jacobian off the grid for the subsequent GN refinement. Since the algorithm is formulated as a maximum-entry search, we apply it to an equivalent shifted tensor as described below.
The algorithm searches for a maximum entry by right-orthogonalizing the TT cores and performing a left-to-right sweep. At each step, only several candidate index paths with the largest partial-contraction norms are retained.

The minimal entry of $\mathcal J$ is  obtained by executing the algorithm twice. First, one computes
\[
\mathbf{n}_{\mathrm{max}}
= \argmax_{\mathbf{n}\in\mathcal{I}} \mathcal J (\mathbf{n}),
\]

\noindent and then solves
\[
\mathbf n_\ast =
\argmax_{\mathbf{n}\in\mathcal{I}}
\left\{ \mathcal J(\mathbf n_{\mathrm{max}}) - \mathcal J(\mathbf n) \right\}.
\]
The corresponding parameter value $\alpha_{\mathbf n_\ast}$ is the TROM-based discrete minimizer. Depending on the inverse problem, it can be used either as a stand-alone approximate minimizer of the objective functional or as an informed initial guess $\alpha^{(0)}=\alpha_{\mathbf n_\ast}$ for a subsequent GN solve.

\begin{remark}
\label[remark]{rem:map-vs-objective}
		Unlike TT-based methods that construct and search a TT approximation of a data-specific scalar objective~\cite{sozykin2022ttopt,krivorotko2020global,zheltkova2018tensor}, our  TROM approximates the complete parameter-to-observation map once and reuses it for multiple observation vectors.  Indeed, the objective tensor $\mathcal J$ is assembled in closed form from the TT cores of $\widehat{\mathcal T}$ by the construction above, so a new data set $y_{\mathrm{obs}}$ costs no FOM evaluations beyond those already spent on the TROM.
More precisely, the dependence on $y_{\mathrm{obs}}$ is confined to a single core.  In the TT addition defining $\mathcal E$, only the first core $H_1=[\,G_1\;\;{-}y_{\mathrm{obs}}\,]\in\mathbb R^{N_{\mathrm{obs}}\times(r_2+1)}$ involves the data; the cores $H_2,\ldots,H_{D+1}$, and with them the squared cores $\widetilde H_2,\ldots,\widetilde H_{D+1}$ in~\eqref{eq:R2}, are data independent and are formed once.  Since the columns of $G_1$ are orthonormal,
\[
W=H_1^{\mathsf T}H_1
=\begin{pmatrix}
I_{r_2} & -y_{\mathrm{red}}\\[0.2em]
-y_{\mathrm{red}}^{\mathsf T} & \|y_{\mathrm{obs}}\|_2^2
\end{pmatrix},
\]
so a new data set enters only through the $r_2+1$ numbers $y_{\mathrm{red}}=(G_1)^{\mathsf T}y_{\mathrm{obs}}$ of~\eqref{eq:y_red} and $\|y_{\mathrm{obs}}\|_2^2$, and only $\widehat H_2$ has to be reassembled.  What recurs for every data set is therefore the formation of $\widehat H_2$, one TT-rounding sweep of~\eqref{eq:R2}, and the minimal-entry search; no forward solve is involved.  Reuse is of course confined to the parameter box $\mathcal A$ and the grid $\hcA$ on which $\widehat{\mathcal T}$ was built. Building a data-specific objective tensor directly by TT-Cross instead requires one FOM solve per sampled entry, i.e., on the order of $D N_{\mathrm{max}} \bar r_{\mathrm{max}}^2$ FOM evaluations for every data set, where $N_{\mathrm{max}}=\max_i N_i$ and $\bar r_{\mathrm{max}}$ bounds the TT ranks of $\mathcal J$.
\end{remark}

\section{Error-to-inversion analysis}
\label{sec:error-to-inversion}

Error control for ROM-based optimization and inversion has been developed
primarily for certified reduced-basis methods~\cite{hesthaven2016certified}.  Certified reduced-basis approximations of parametrized optimal control problems, together with a posteriori bounds for the associated quantities of interest, were introduced in~\cite{negri2013reduced,karcher2014certified}.   The authors of~\cite{dihlmann2015certified} and~\cite{manzoni2019certified} derive a posteriori estimates for reduced
objective functions, gradients, and optimal parameters in PDE-constrained
parameter optimization, while related work certifies reduced four-dimensional variational (4D-Var)
approximations~\cite{karcher2018reduced} or embeds reduced models in
error-aware trust-region schemes for PDE-constrained optimization and
parameter identification~\cite{qian2017certified,kartmann2024adaptive}.  
More recently, finite-element surrogates based on extreme learning networks have been analyzed together with the resulting parameter-reconstruction error~\cite{burman2026solving}. The analysis below is complementary: it directly propagates errors in a TROM approximation of the parameter-to-observation map and its Jacobian to the recovered parameter, and then relates those errors to tensor compression and interpolation.


We next quantify how the TROM error affects the parameter recovered by the
continuous optimization problem. Denote the FOM
objective in~\eqref{eq:inverse_problem} by $\Phi_\lambda$ and its TROM
counterpart, obtained by replacing $f$ with the surrogate $\widehat f$
from~\eqref{eqn:f_TT}, by $\widehat\Phi_\lambda$.  The derivation of
\eqref{eq:inverse_problem_TROM} shows that the reduced TROM objective and
$\widehat\Phi_\lambda$ differ only by a constant independent of $\alpha$;
hence they have the same minimizers.

Since $\mathcal A$ is compact and the objectives are continuous, the two
problems have global minimizers
\begin{equation}
\alpha_\lambda\in\argmin_{\alpha\in\mathcal A}\Phi_\lambda(\alpha),
\qquad
\widehat\alpha_\lambda
\in\argmin_{\alpha\in\mathcal A}\widehat\Phi_\lambda(\alpha).
\label{eq:fom-trom-global-minimizers}
\end{equation}
These minimizers need not be unique; in what follows we fix one global
minimizer of each problem.

Let $\mathcal K\subset\mathcal A$ be a compact convex set containing the selected minimizers in~\eqref{eq:fom-trom-global-minimizers}. Since $\widehat\alpha_\lambda$ is not known a priori, $\mathcal K$ is treated as an assumed localization set rather than constructed explicitly. The choice $\mathcal K=\mathcal A$ is always admissible and is covered by \Cref{rem:large-lambda-strong-convexity}.
Assume that
$f\in C^1(\mathcal K)^{N_{\mathrm{obs}}}$ and let
$J(\alpha)=\partial f/\partial\alpha$. We also assume $\widehat f$ continuous, $\widehat f\in
	W^{1,\infty}(\mathcal K)^{N_{\mathrm{obs}}}$ and let
$\widehat J(\alpha)=\partial\widehat f/\partial\alpha\in L^\infty(\mathcal K)^{N_{\mathrm{obs}}\times D}$. Introduce the
local approximation errors
\begin{equation}
\epsilon_0^{\mathrm{rom}}
=\sup_{\alpha\in\mathcal K}
 \|\widehat f(\alpha)-f(\alpha)\|_2,
\qquad
\epsilon_1^{\mathrm{rom}}
=\mbox{ess}\sup_{\alpha\in\mathcal K}
 \|\widehat J(\alpha)-J(\alpha)\|_2,
\label{eq:C0-C1-trom-errors}
\end{equation}
and the local FOM bounds
\begin{equation}
R_{\mathcal K}
=\sup_{\alpha\in\mathcal K}
 \|f(\alpha)-y_{\mathrm{obs}}\|_2,
\qquad
L_{\mathcal K}
=\sup_{\alpha\in\mathcal K}\|J(\alpha)\|_2.
\label{eq:local-fom-bounds}
\end{equation}

The relaxed regularity assumption on $\widehat f$ permits continuous piecewise-polynomial
interpolation, including \eqref{eqn:lagrange} with
$p=2$. 

\begin{lemma}[A generic ROM inversion error]
\label{thm:local-trom-reconstruction-error}
Suppose that the regularized FOM objective is locally strongly convex on
$\mathcal K$: there exists $\mu>0$ such that
\begin{equation}
\big\langle
\nabla\Phi_\lambda(\alpha)-\nabla\Phi_\lambda(\beta),
\alpha-\beta
\big\rangle
\geq
\mu\|\alpha-\beta\|_2^2
\qquad
\text{for all }\alpha,\beta\in\mathcal K.
\label{eq:local-strong-monotonicity}
\end{equation}
If $\Phi_\lambda$ is twice continuously differentiable, it is sufficient that
$\nabla^2\Phi_\lambda(\alpha)\succeq\mu I$ on $\mathcal K$.
Then the global minimizers in~\eqref{eq:fom-trom-global-minimizers} satisfy
\begin{equation}
\|\widehat\alpha_\lambda-\alpha_\lambda\|_2
\leq
\frac{
R_{\mathcal K}\epsilon_1^{\mathrm{rom}}
+(L_{\mathcal K}+\epsilon_1^{\mathrm{rom}})\epsilon_0^{\mathrm{rom}}
}{\mu}.
\label{eq:local-parameter-error-bound}
\end{equation}
\end{lemma}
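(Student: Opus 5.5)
The plan is to compare first-order optimality information for the two problems on the convex set $\mathcal K$. I will use strong monotonicity~\eqref{eq:local-strong-monotonicity} of $\nabla\Phi_\lambda$, together with a Lipschitz bound on the difference $D\defeq\Phi_\lambda-\widehat\Phi_\lambda$. Since $\widehat f$ is only $W^{1,\infty}$, I will not differentiate $\widehat\Phi_\lambda$ at $\widehat\alpha_\lambda$. Instead I will use one-sided difference quotients of $\widehat\Phi_\lambda$ and transfer them to $\Phi_\lambda$, which is $C^1$.

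\emph{Step 1: Lipschitz bound for $D$.} For a.e.\ $\alpha\in\mathcal K$ the Tikhonov terms cancel, and
\[
\nabla D(\alpha)=J^\mathsf{T}(f-y_{\mathrm{obs}})-\widehat J^\mathsf{T}(\widehat f-y_{\mathrm{obs}})
=(J-\widehat J)^\mathsf{T}(f-y_{\mathrm{obs}})-\widehat J^\mathsf{T}(\widehat f-f).
\]
Using $\|\widehat J\|_2\le L_{\mathcal K}+\epsilon_1^{\mathrm{rom}}$ and the definitions~\eqref{eq:C0-C1-trom-errors} and~\eqref{eq:local-fom-bounds}, this gives $\|\nabla D\|_2\le G\defeq R_{\mathcal K}\epsilon_1^{\mathrm{rom}}+(L_{\mathcal K}+\epsilon_1^{\mathrm{rom}})\epsilon_0^{\mathrm{rom}}$ a.e.\ on $\mathcal K$. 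Since $D$ is continuous and $W^{1,\infty}$ on the convex set $\mathcal K$, it is $G$-Lipschitz on $\mathcal K$.

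This step is the main technical point, because a segment has measure zero and an a.e.\ bound on $\nabla D$ does not directly control $D$ along it. I would settle it with the standard fact that $W^{1,\infty}$ functions on convex domains are Lipschitz with constant $\operatorname{ess\,sup}|\nabla D|$. One route is mollification: apply the mean value inequality to the mollified function and pass to the limit using continuity of $D$.

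\emph{Step 2: optimality conditions.} Set $e=\widehat\alpha_\lambda-\alpha_\lambda$.

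For the FOM, $\alpha_\lambda$ minimizes over $\mathcal A\supset\mathcal K$, $\mathcal K$ is convex, and $\Phi_\lambda\in C^1$. Hence $\langle\nabla\Phi_\lambda(\alpha_\lambda),e\rangle\ge0$.

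For the TROM, let $d=\alpha_\lambda-\widehat\alpha_\lambda=-e$. For $t\in(0,1]$ the point $\widehat\alpha_\lambda+td$ lies in $\mathcal K$, so minimality gives $\widehat\Phi_\lambda(\widehat\alpha_\lambda+td)\ge\widehat\Phi_\lambda(\widehat\alpha_\lambda)$. Writing $\widehat\Phi_\lambda=\Phi_\lambda-D$ and applying Step 1,
\[
\Phi_\lambda(\widehat\alpha_\lambda+td)-\Phi_\lambda(\widehat\alpha_\lambda)\ge D(\widehat\alpha_\lambda+td)-D(\widehat\alpha_\lambda)\ge -Gt\|d\|_2 .
\]
Dividing by $t$ and letting $t\downarrow0$ yields $\langle\nabla\Phi_\lambda(\widehat\alpha_\lambda),e\rangle\le G\|e\|_2$.

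\emph{Step 3: conclusion.} Combining the two inequalities with~\eqref{eq:local-strong-monotonicity} gives
\[
\mu\|e\|_2^2\le\langle\nabla\Phi_\lambda(\widehat\alpha_\lambda)-\nabla\Phi_\lambda(\alpha_\lambda),e\rangle\le G\|e\|_2 .
\]
This is~\eqref{eq:local-parameter-error-bound}.

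For the Hessian variant, $\nabla^2\Phi_\lambda\succeq\mu I$ on the convex set $\mathcal K$ implies~\eqref{eq:local-strong-monotonicity} by integrating $\nabla^2\Phi_\lambda$ along the segment from $\beta$ to $\alpha$.
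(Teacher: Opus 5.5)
Your proof is correct, and it shares the paper's skeleton. It uses the same splitting of the gradient difference into $(\widehat J-J)^\mathsf{T}(f-y_{\mathrm{obs}})+\widehat J^\mathsf{T}(\widehat f-f)$ and the same constant $B=G$. It also uses Lipschitz continuity of the objective difference on the convex set $\mathcal K$, and combines two first-order inequalities with strong monotonicity in the same way.

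The difference is in how the nonsmooth TROM optimality condition is obtained. The paper extends $q=\widehat\Phi_\lambda-\Phi_\lambda$ to a globally $B$-Lipschitz function and invokes the Clarke necessary condition. This yields some $\zeta\in\partial_C q(\widehat\alpha_\lambda)$ with $\|\zeta\|_2\le B$ and $\langle\nabla\Phi_\lambda(\widehat\alpha_\lambda)+\zeta,\widehat\alpha_\lambda-\alpha_\lambda\rangle\le 0$. You instead compare values of $\widehat\Phi_\lambda=\Phi_\lambda-D$ along the feasible segment from $\widehat\alpha_\lambda$ toward $\alpha_\lambda$, then pass to the limit in a one-sided difference quotient. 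This needs only the Lipschitz bound on $\mathcal K$ and the differentiability of $\Phi_\lambda$.

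Your route is more elementary and self-contained. It avoids both the Lipschitz extension and Clarke calculus (the sum rule with a $C^1$ term and normal cones). Every point you evaluate stays in $\mathcal K$, which is where the identity $\widehat\Phi_\lambda=\Phi_\lambda-D$ and the regularity assumptions on $f$ and $\widehat f$ actually hold. The paper's route is shorter because it appeals to standard nonsmooth-analysis results. You also explicitly justify the passage from the a.e.\ gradient bound to the Lipschitz bound via mollification, which the paper asserts without comment.
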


\begin{proof}
Let $r(\alpha)=f(\alpha)-y_{\mathrm{obs}}$ and
$e(\alpha)=\widehat f(\alpha)-f(\alpha)$.  For
a.e. $\alpha\in\mathcal K$,
\[
\nabla \widehat\Phi_\lambda (\alpha) -\nabla\Phi_\lambda (\alpha)
=(\widehat J(\alpha)-J(\alpha))^\mathsf{T}r(\alpha)
+\widehat J(\alpha)^\mathsf{T}e(\alpha).
\]
Set
	$q=\widehat\Phi_\lambda-\Phi_\lambda$. Since $\|\widehat J(\alpha)\|_2\leq
L_{\mathcal K}+\epsilon_1^{\mathrm{rom}}$ for a.e. $\alpha\in\mathcal K$,
\eqref{eq:C0-C1-trom-errors}--\eqref{eq:local-fom-bounds} give
\begin{equation}
\|\nabla q(\alpha)\|_2
\leq B:= R_{\mathcal K}\epsilon_1^{\mathrm{rom}}
+(L_{\mathcal K}+\epsilon_1^{\mathrm{rom}})\epsilon_0^{\mathrm{rom}}
\quad\text{for a.e. }\alpha\in\mathcal K.
\label{eq:gradient-perturbation-bound}
\end{equation}

 Since $\mathcal K$ is convex, $q$ is $B$-Lipschitz on
$\mathcal K$. After extending $q$ to a $B$-Lipschitz function and retaining
the same notation, every Clarke subgradient
$\zeta\in\partial_C q(\alpha)$ therefore satisfies
$\|\zeta\|_2\leq B$.

Let $d=\widehat\alpha_\lambda-\alpha_\lambda$.  We can assume
$d\neq0$.  The smooth and Clarke first-order necessary conditions therefore
yield, for some
$\zeta\in\partial_C q(\widehat\alpha_\lambda)$,
\[
\big\langle\nabla\Phi_\lambda(\alpha_\lambda),d\big\rangle\geq0,
\qquad
\big\langle
\nabla\Phi_\lambda(\widehat\alpha_\lambda)+\zeta,d
\big\rangle\leq0.
\]
Consequently, \eqref{eq:local-strong-monotonicity} and
\eqref{eq:gradient-perturbation-bound} imply
\begin{align*}
\mu\|d\|_2^2
&\leq
\big\langle
\nabla\Phi_\lambda(\widehat\alpha_\lambda)
-\nabla\Phi_\lambda(\alpha_\lambda),d
\big\rangle\\
&\leq{\big\langle
\nabla\Phi_\lambda(\widehat\alpha_\lambda),d
\big\rangle}\\
&\leq-\langle\zeta,d\rangle
\leq\|\zeta\|_2\|d\|_2
\leq
\bigl(R_{\mathcal K}\epsilon_1^{\mathrm{rom}}
 +(L_{\mathcal K}+\epsilon_1^{\mathrm{rom}})\epsilon_0^{\mathrm{rom}}\bigr)\|d\|_2.
\end{align*}
Here the second and third steps use the two first-order conditions above.
Dividing by $\mu\|d\|_2$ proves~\eqref{eq:local-parameter-error-bound}.
\end{proof}

We note that for the true parameter $\alpha_{\mathrm{true}}\in\mathcal A$, the assumptions
of \Cref{thm:local-trom-reconstruction-error} and the triangle inequality imply
\begin{equation}
\|\widehat\alpha_\lambda-\alpha_{\mathrm{true}}\|_2
\leq
\|\alpha_\lambda-\alpha_{\mathrm{true}}\|_2
+\frac{
R_{\mathcal K}\epsilon_1^{\mathrm{rom}}
+(L_{\mathcal K}+\epsilon_1^{\mathrm{rom}})\epsilon_0^{\mathrm{rom}}
}{\mu}.
\label{eq:total-parameter-error-bound}
\end{equation}

The first term on the right-hand side of
\eqref{eq:total-parameter-error-bound} is the reconstruction error of the FOM
inverse problem and contains the effects of measurement noise,
regularization bias, and FOM discretization.  The second term is the additional
error introduced by replacing the FOM by a ROM. We now consider the critical quantities $\epsilon_0^{\mathrm{rom}}$ and $\epsilon_1^{\mathrm{rom}}$  for the TROM inversion.

\begin{remark}
\label[remark]{rem:large-lambda-strong-convexity}
 Define
$
B_{\mathcal A}\defeq
\sup\limits_{\alpha\in\mathcal A}
\left\|
 \langle f(\alpha)-y_{\mathrm{obs}},
 \nabla^2 f(\alpha)\rangle
\right\|_2
$  for $f\in C^2(\mathcal A;\mathbb R^{N_{\mathrm{obs}}})$.
Since $\mathcal A$ is compact, it holds $B_{\mathcal A}<\infty$.
Denote by $\sigma_0\defeq
\min_{\alpha\in\mathcal A}\sigma_{\min}(J(\alpha))\geq0 $ the minimum singular value of the FOM Jacobian over $\mathcal A$.
Then
\[
\nabla^2\Phi_\lambda(\alpha)
=J(\alpha)^\mathsf{T}J(\alpha)
+\langle f(\alpha)-y_{\mathrm{obs}},
\nabla^2 f(\alpha)\rangle
+\lambda I
\succeq
\bigl(\sigma_0^2+\lambda-B_{\mathcal A}\bigr)I
\]
uniformly for $\alpha\in\mathcal A$.  In this case,
\eqref{eq:local-strong-monotonicity} holds on $\mathcal K=\mathcal A$
with $\mu=\sigma_0^2+\lambda-B_{\mathcal A}>0$ whenever
$\lambda>B_{\mathcal A}-\sigma_0^2$, and hence for all sufficiently large
$\lambda$.
\end{remark}

\subsection{Estimates for \texorpdfstring{$\epsilon_0^{\mathrm{rom}}$}{epsilon0} and
\texorpdfstring{$\epsilon_1^{\mathrm{rom}}$}{epsilon1}}
\label{sec:epsilon-estimates}

We adapt the interpolation argument of
\cite[Sec.~4]{mamonov2022interpolatory} and \cite{mamonov2025apriori} to the parameter-to-observation map
and extend it to its Jacobian. Moreover, to avoid a tensor error bound that grows under
refinement of the parameter grid, we introduce to this argument a max-fiber
tensor norm.

For $i=1,\ldots,D$, let
\[
I_i=[\alpha_i^{\mathrm{min}},\alpha_i^{\mathrm{max}}],
\qquad
\delta_i=\max_{1\leq j<N_i}
|\halpha_i^{j+1}-\halpha_i^j|
\]
denote the $i$th parameter interval and its maximum grid spacing.  For a
vector function $v$ on $\mathcal A$, let $\mathcal I_i$ denote the
factor of the tensor-product interpolant~\eqref{Interp} acting in the $i$th
parameter direction:
\begin{equation}
(\mathcal I_i v)(\alpha)
=\sum_{j=1}^{N_i}(\chi_i(\alpha_i))_j
v(\alpha_1,\ldots,\alpha_{i-1},\halpha_i^j,
\alpha_{i+1},\ldots,\alpha_D).
\label{eq:coordinate-interpolation-operator}
\end{equation}
For a measurable vector function $v$ on $\mathcal A$, we define
$
\|v\|_{\infty,2}
=\operatorname*{ess\,sup}_{\alpha\in\mathcal A}\|v(\alpha)\|_2.
$
For simplicity of presentation, we use the single mesh parameter
\begin{equation}
\delta=\max_{1\leq i\leq D}\delta_i
\label{eq:maximum-parameter-grid-spacing}
\end{equation}
and state the estimates under the mesh-comparability condition below.  This
convention is used only to streamline the presentation; retaining the
individual $\delta_i$ gives sharper anisotropic estimates and removes this
condition.

For any tensor
$\mathcal X\in\mathbb R^{N_{\mathrm{obs}}\times N_1\times\cdots\times N_D}$,
define the max-fiber norm
\begin{equation}
\|\mathcal X\|_\ast
\defeq
\max_{\substack{1\leq n_i\leq N_i\\i=1,\ldots,D}}
\|\mathcal X(:,n_1,\ldots,n_D)\|_2.
\label{eq:max-fiber-tensor-norm}
\end{equation}
It follows from~\eqref{eqn:snapmulti} that
\begin{equation}
\|\mathcal T\|_\ast
=\max_{\halpha\in\hcA}\|f(\halpha)\|_2
\leq\sup_{\alpha\in\mathcal A}\|f(\alpha)\|_2.
\label{eq:observation-tensor-max-fiber-bound}
\end{equation}
Thus $\|\mathcal T\|_\ast$ is bounded independently of the number of samples
in the parameter grid whenever $f$ is uniformly bounded on $\mathcal A$.

\begin{assumption}[Tensor compression in the max-fiber norm]
\label[assumption]{ass:max-fiber-tensor-compression}
For some $\varepsilon_{\mathrm{TT},\ast}>0$, the compressed tensor satisfies
\begin{equation}
\|\mathcal T-\widehat{\mathcal T}\|_\ast
\leq
\varepsilon_{\mathrm{TT},\ast}\|\mathcal T\|_\ast.
\label{eq:max-fiber-tensor-compression}
\end{equation}
\end{assumption}

\begin{remark}[Relation to the Frobenius TT tolerance]
\label[remark]{rem:frobenius-max-fiber-relation}
We retain the Frobenius-norm condition~\eqref{TTapprox}.  Since
$\|\mathcal X\|_\ast\leq\|\mathcal X\|_F$, it implies
\begin{equation}
\|\mathcal T-\widehat{\mathcal T}\|_\ast
\leq\varepsilon_{\mathrm{TT}}\|\mathcal T\|_F.
\label{eq:frobenius-to-max-fiber-error}
\end{equation}
Consequently, for $\mathcal T\ne0$, a prescribed max-fiber relative tolerance
$\varepsilon_{\mathrm{TT},\ast}$ is guaranteed by choosing the Frobenius
relative tolerance in~\eqref{TTapprox} so that
$
\varepsilon_{\mathrm{TT}} \leq
\varepsilon_{\mathrm{TT},\ast}
\|\mathcal T\|_\ast \|\mathcal T\|_F^{-1}.
$
A fixed relative tolerance in~\eqref{TTapprox} alone does not yield a grid-independent relative
tolerance in~\eqref{eq:max-fiber-tensor-compression} in general.  When TT-Cross is used, \eqref{eq:max-fiber-tensor-compression} may instead be imposed or validated independently.
\end{remark}

\begin{assumption}[Interpolation and parametric regularity]
\label[assumption]{ass:interpolation-regularity}
For some integer $p\geq2$ and constants
$C_a,C_{a,1},C_e,C_{e,1}>0$ independent of the grid, the coordinate
meshes are uniformly comparable: $\delta_i\geq c_\delta\delta$ for some
$c_\delta>0$ independent of grid refinement.  
The coordinate interpolants satisfy the standard approximation properties:
\begin{align}
\|v-\mathcal I_i v\|_{\infty,2}
&\leq C_a\delta^p\|\partial_i^p v\|_{\infty,2},
\label{eq:univariate-vector-interpolation}\\
\|\partial_i(v-\mathcal I_i v)\|_{\infty,2}
&\leq C_{a,1}\delta^{p-1}
\|\partial_i^p v\|_{\infty,2}
\label{eq:univariate-derivative-interpolation}
\end{align}
for every sufficiently smooth vector-valued function $v$.  Moreover, the
weights are continuous, weakly differentiable and satisfy
\begin{equation}
\sup_{a\in I_i}\|\chi_i(a)\|_{\ell_1}\leq C_e,
\qquad
\operatorname*{ess\,sup}_{a\in I_i}\|\chi_i'(a)\|_{\ell_1}
\leq C_{e,1}\delta^{-1}
\label{eq:interpolation-stability-error}
\end{equation}
for $i=1,\ldots,D$.  Finally, we assume
$f\in C^{p+1}(\mathcal A;\mathbb R^{N_{\mathrm{obs}}})$, and we set
\begin{align}
M_{p,\mathcal A}
\defeq
\max_{1\leq i\leq D}
\|\partial_i^p f\|_{\infty,2},
\qquad
M_{p+1,\mathcal A}
\defeq
\max_{1\leq i,q\leq D}
\|\partial_q\partial_i^p f\|_{\infty,2}.
\label{eq:mixed-parametric-smoothness-constant}
\end{align}
\end{assumption}
We note that the interpolation coefficients stability bounds as in  \eqref{eq:interpolation-stability-error}  are standard for local polynomial interpolation on a uniform or
quasi-uniform grid; the factor $\delta^{-1}$ records the scaling introduced
by differentiating the interpolation weights.
The stability constants enter the estimates below through the factor $C_e^D$, which is exponential in the parameter dimension. For the local interpolation rule~\eqref{eqn:lagrange}, $C_e$ is close to one and $C_e=1$ for $p=2$, so this factor remains moderate for the parameter dimensions considered here. Global high-degree interpolation may require different arguments for the resulting estimates to remain practical.

We need the elementary estimate given by the following lemma.

\begin{lemma}[Max-fiber contraction]
\label{lem:max-fiber-contraction}
For every
$\mathcal X\in\mathbb R^{N_{\mathrm{obs}}\times N_1\times\cdots\times N_D}$
and vectors $v_i\in\mathbb R^{N_i}$, $i=1,\ldots,D$, with the contraction understood as in~\eqref{eqn:extractbt}, it holds
\begin{equation}
\|\mathcal X\times_2v_1\times_3\cdots\times_{D+1}v_D\|_2
\leq
\|\mathcal X\|_\ast\prod_{i=1}^D\|v_i\|_{\ell_1}.
\label{eq:max-fiber-contraction}
\end{equation}
\end{lemma}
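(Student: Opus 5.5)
The plan is to write the contraction as an explicit weighted sum of mode-$1$ fibers and then apply the triangle inequality. By the definition of the $k$-mode tensor--vector products in~\eqref{eqn:extractbt}, contracting modes $2,\ldots,D+1$ with $v_1,\ldots,v_D$ gives the vector
\[
\mathcal X\times_2v_1\times_3\cdots\times_{D+1}v_D
=\sum_{n_1=1}^{N_1}\cdots\sum_{n_D=1}^{N_D}
(v_1)_{n_1}\cdots(v_D)_{n_D}\,\mathcal X(:,n_1,\ldots,n_D)
\in\mathbb R^{N_{\mathrm{obs}}}.
\]
This is the same structure as the interpolant~\eqref{Interp}, with the grid values replaced by fibers of $\mathcal X$. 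I would state this identity first, checking it by induction on the number of contracted modes. Each product $\times_{k}$ removes one index by summation, and the first mode is never touched, so the order of the contractions does not matter.

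Next, I apply the triangle inequality for $\|\cdot\|_2$ in $\mathbb R^{N_{\mathrm{obs}}}$ to this finite sum. That gives the bound
\[
\sum_{\mathbf n}|(v_1)_{n_1}|\cdots|(v_D)_{n_D}|\,\|\mathcal X(:,n_1,\ldots,n_D)\|_2 .
\]
Each fiber norm is at most $\|\mathcal X\|_\ast$ by~\eqref{eq:max-fiber-tensor-norm}, so this factor can be pulled out of the sum.

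What remains is the multiple sum of products of absolute values. It factorizes as $\prod_{i=1}^D\sum_{n_i}|(v_i)_{n_i}|=\prod_{i=1}^D\|v_i\|_{\ell_1}$, which yields~\eqref{eq:max-fiber-contraction}.

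There is no genuine obstacle here. The only step needing care is the first one: confirming that the sequential mode products coincide with the full multilinear sum, with the convention that mode indices do not shift after each contraction. This is routine once the contraction is interpreted as in~\eqref{eqn:extractbt}.
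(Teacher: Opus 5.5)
Your proposal is correct and follows the paper's proof: you expand the contraction as the weighted sum of mode-$1$ fibers, apply the triangle inequality, bound each fiber norm by $\|\mathcal X\|_\ast$, and factor the remaining sum into $\prod_{i=1}^D\|v_i\|_{\ell_1}$. Your additional check that the sequential mode products agree with the full multilinear sum is a detail the paper leaves implicit.
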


\begin{proof}
Expanding the contractions and applying the triangle inequality gives
\begin{align*}
&\|\mathcal X\times_2v_1\times_3\cdots\times_{D+1}v_D\|_2\\
&\quad=
\left\|
\sum_{n_1=1}^{N_1}\cdots\sum_{n_D=1}^{N_D}
\left(\prod_{i=1}^D(v_i)_{n_i}\right)
\mathcal X(:,n_1,\ldots,n_D)
\right\|_2\\
&\quad\leq
\|\mathcal X\|_\ast
\sum_{n_1=1}^{N_1}\cdots\sum_{n_D=1}^{N_D}
\prod_{i=1}^D|(v_i)_{n_i}|
=\|\mathcal X\|_\ast\prod_{i=1}^D\|v_i\|_{\ell_1}.
\end{align*}
\end{proof}

Define the interpolant obtained from the uncompressed observation tensor by replacing $\widehat{\mathcal T}$ with $\mathcal T$ in~\eqref{eqn:extractbt},
\begin{equation}
f_{\mathcal T}(\alpha)\defeq\mathcal T\times_2\chi_1(\alpha_1)\times_3\cdots\times_{D+1}\chi_D(\alpha_D)
\label{eq:full-tensor-interpolant}
\end{equation}

For brevity, set
\begin{equation}\label{ETT}
E_{\mathrm{TT},\ast}
=\varepsilon_{\mathrm{TT},\ast}\|\mathcal T\|_\ast.
\end{equation}

\begin{proposition}[TROM map error]
	\label{prop:epsilon-zero-estimate}
		Under \Cref{ass:max-fiber-tensor-compression,ass:interpolation-regularity}, it holds
	\begin{equation}
		\epsilon_0^{\mathrm{rom}}
		\leq
		C_e^D E_{\mathrm{TT},\ast}
		+DC_aC_e^{D-1}M_{p,\mathcal A}\delta^p.
		\label{eq:epsilon-zero-estimate-relative}
	\end{equation}
with constants $C_a$, $C_e$, and $M_{p,\mathcal A}$ defined in \eqref{eq:univariate-vector-interpolation},  \eqref{eq:interpolation-stability-error} , and \eqref{eq:mixed-parametric-smoothness-constant}, respectively.
\end{proposition}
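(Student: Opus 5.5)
The plan is to split the error at a point $\alpha\in\mathcal K\subset\mathcal A$ through the full-tensor interpolant $f_{\mathcal T}$ of \eqref{eq:full-tensor-interpolant}:
\[
\widehat f(\alpha)-f(\alpha)=\bigl(\widehat f(\alpha)-f_{\mathcal T}(\alpha)\bigr)+\bigl(f_{\mathcal T}(\alpha)-f(\alpha)\bigr).
\]
The first term is the compression error and the second is the interpolation error. Taking the supremum over $\mathcal K$ of the sum of the two bounds then gives \eqref{eq:epsilon-zero-estimate-relative}.

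\emph{Compression term.} By \eqref{eqn:extractbt} and \eqref{eq:full-tensor-interpolant}, the difference $\widehat f(\alpha)-f_{\mathcal T}(\alpha)$ equals $(\widehat{\mathcal T}-\mathcal T)\times_2\chi_1(\alpha_1)\cdots\times_{D+1}\chi_D(\alpha_D)$. Applying \Cref{lem:max-fiber-contraction} with $v_i=\chi_i(\alpha_i)$, then the stability bound $\|\chi_i(a)\|_{\ell_1}\le C_e$ from \eqref{eq:interpolation-stability-error}, and finally \Cref{ass:max-fiber-tensor-compression}, we obtain the bound $C_e^D\varepsilon_{\mathrm{TT},\ast}\|\mathcal T\|_\ast=C_e^DE_{\mathrm{TT},\ast}$.

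\emph{Interpolation term.} By \eqref{eqn:snapmulti} and \eqref{Interp}, $f_{\mathcal T}=\mathcal I_1\mathcal I_2\cdots\mathcal I_D f$, where the coordinate operators $\mathcal I_i$ of \eqref{eq:coordinate-interpolation-operator} act on distinct variables and therefore commute. We use the standard telescoping identity
\[
f-\mathcal I_1\cdots\mathcal I_D f=\sum_{i=1}^D \mathcal I_1\cdots\mathcal I_{i-1}\,(I-\mathcal I_i)f.
\]
The $i$th summand has the interpolation operators in directions $1,\dots,i-1$ applied to $w_i:=(I-\mathcal I_i)f$. Each $\mathcal I_j$ is a finite combination of point evaluations with weights $\chi_j(\alpha_j)$, so $\|\mathcal I_jv\|_{\infty,2}\le C_e\|v\|_{\infty,2}$. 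This is the continuous analogue of \Cref{lem:max-fiber-contraction} and follows from the triangle inequality. Hence the $i$th summand is bounded by $C_e^{i-1}\|w_i\|_{\infty,2}$. By \eqref{eq:univariate-vector-interpolation} and \eqref{eq:mixed-parametric-smoothness-constant}, $\|w_i\|_{\infty,2}\le C_a\delta^pM_{p,\mathcal A}$. Since $C_e\ge1$ (interpolation reproduces constants, so $\sum_j(\chi_i)_j=1$), we may bound $C_e^{i-1}\le C_e^{D-1}$. Summing over $i$ gives $DC_aC_e^{D-1}M_{p,\mathcal A}\delta^p$.

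The main point needing care is the stability step $\|\mathcal I_jv\|_{\infty,2}\le C_e\|v\|_{\infty,2}$ when applied to the non-smooth function $w_i$. Since the sup-norm is taken over the whole box $\mathcal A$ and $\mathcal I_j$ only evaluates $v$ at grid nodes in direction $j$ with the other coordinates fixed, the estimate needs only boundedness of $w_i$, not smoothness. One should use sup rather than ess sup here; this is fine because $f$ and $\mathcal I_if$ are continuous, the weights being continuous by \Cref{ass:interpolation-regularity}. Also, \eqref{eq:univariate-vector-interpolation} is applied to $v=f$ itself, with the constant taken over all of $\mathcal A$, which is legitimate because $f\in C^{p+1}(\mathcal A)$.
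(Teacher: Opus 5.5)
Your proof is correct and follows the paper's argument essentially step for step. You split the error through $f_{\mathcal T}$; you bound the compression term by \Cref{lem:max-fiber-contraction} together with $\|\chi_i\|_{\ell_1}\le C_e$; and you bound the interpolation term by the telescoping identity $I-\mathcal I_1\cdots\mathcal I_D=\sum_{i}\mathcal I_1\cdots\mathcal I_{i-1}(I-\mathcal I_i)$, the stability of the $\mathcal I_i$, and \eqref{eq:univariate-vector-interpolation}. Your additional remarks make explicit details the paper leaves implicit: $C_e\ge 1$ holds because \eqref{eq:univariate-vector-interpolation} forces reproduction of constants, and the stability step needs a pointwise supremum, which is harmless for the continuous functions involved.
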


\begin{proof}
	The triangle inequality gives
	\begin{equation}
		\|f(\alpha)-\widehat f(\alpha)\|_2
		\leq
		\|f(\alpha)-f_{\mathcal T}(\alpha)\|_2
		+\|f_{\mathcal T}(\alpha)-\widehat f(\alpha)\|_2.
		\label{eq:epsilon-zero-triangle}
	\end{equation}
		For the second term, \Cref{lem:max-fiber-contraction} and
		\eqref{eq:interpolation-stability-error} give by repeating calculations from \cite[(4.10)]{mamonov2022interpolatory}
	\begin{equation}
		\|f_{\mathcal T}(\alpha)-\widehat f(\alpha)\|_2
		\leq
		\|\mathcal T-\widehat{\mathcal T}\|_\ast
		\prod_{i=1}^D\|\chi_i(\alpha_i)\|_{\ell_1}
        \leq C_e^D\|\mathcal T-\widehat{\mathcal T}\|_\ast.
		\label{eq:compressed-tensor-contraction-error}
	\end{equation}

	By~\eqref{eq:coordinate-interpolation-operator}, we can write
	$f_{\mathcal T}=\mathcal I_1\cdots\mathcal I_Df$.  Now the identity
	\begin{equation*}
		I-\mathcal I_1\cdots\mathcal I_D
		=\sum_{i=1}^D
		\mathcal I_1\cdots\mathcal I_{i-1}(I-\mathcal I_i).
		\end{equation*}
	and the stability of $\mathcal I_i$ allow us to apply
	\eqref{eq:univariate-vector-interpolation} successively, which yields
	\begin{align}
		\sup_{\alpha\in\mathcal A}
		\|f(\alpha)-f_{\mathcal T}(\alpha)\|_2
		&\leq
		DC_aC_e^{D-1}M_{p,\mathcal A}\delta^p.
		\label{eq:tensor-product-interpolation-error}
	\end{align}
	 Combining
	\eqref{eq:epsilon-zero-triangle},
	\eqref{eq:compressed-tensor-contraction-error}, and
	\eqref{eq:tensor-product-interpolation-error}, using
		$\mathcal K\subset\mathcal A$, and invoking
		\eqref{eq:max-fiber-tensor-compression} proves
	\eqref{eq:epsilon-zero-estimate-relative}.
\end{proof}

\begin{proposition}[TROM Jacobian error]
\label{prop:epsilon-one-estimate}
Under \Cref{ass:max-fiber-tensor-compression,ass:interpolation-regularity},
\begin{align}
\epsilon_1^{\mathrm{rom}}
&\leq
\sqrt D\,C_{e,1}C_e^{D-1}E_{\mathrm{TT},\ast}\delta^{-1}
\notag\\
&\quad+
\sqrt D\,C_e^{D-1}
\left(
C_{a,1}M_{p,\mathcal A}\delta^{p-1}
+(D-1)C_aM_{p+1,\mathcal A}\delta^p
\right)
\label{eq:epsilon-one-estimate-relative}
\end{align}
with constants $C_a$, $C_{a,1}$, $C_e,\,C_{e,1}$, and $M_{p,\mathcal A}$, $M_{p+1,\mathcal A}$ defined in \eqref{eq:univariate-vector-interpolation}, \eqref{eq:univariate-derivative-interpolation}, \eqref{eq:interpolation-stability-error}, and \eqref{eq:mixed-parametric-smoothness-constant}, respectively.
\end{proposition}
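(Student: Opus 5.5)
We bound the Jacobian error column by column and then sum. Since $\|\widehat J(\alpha)-J(\alpha)\|_2\le\|\widehat J(\alpha)-J(\alpha)\|_F$, it suffices to show that each column satisfies
\[
\|\partial_d\widehat f(\alpha)-\partial_d f(\alpha)\|_2\le A
\qquad\text{for a.e. }\alpha\in\mathcal K,
\]
with $A$ the bracketed quantity in \eqref{eq:epsilon-one-estimate-relative} divided by $\sqrt D$. The factor $\sqrt D$ then comes from summing the $D$ squared column bounds. As in the proof of \Cref{prop:epsilon-zero-estimate}, we split with the full-tensor interpolant \eqref{eq:full-tensor-interpolant}:
\[
\|\partial_d f-\partial_d\widehat f\|_2
\le\|\partial_d f-\partial_d f_{\mathcal T}\|_2
+\|\partial_d(f_{\mathcal T}-\widehat f)\|_2 .
\]

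\textbf{Compression term.} Differentiating \eqref{eqn:extractbt} and \eqref{eq:full-tensor-interpolant} in $\alpha_d$ replaces $\chi_d(\alpha_d)$ by $\chi_d'(\alpha_d)$ and leaves the other weights unchanged. Therefore $\partial_d(f_{\mathcal T}-\widehat f)$ is the contraction of $\mathcal T-\widehat{\mathcal T}$ with the vectors $\chi_1,\dots,\chi_d',\dots,\chi_D$. \Cref{lem:max-fiber-contraction} together with \eqref{eq:interpolation-stability-error} bounds this by
\[
C_e^{D-1}C_{e,1}\delta^{-1}\|\mathcal T-\widehat{\mathcal T}\|_\ast
\le C_{e,1}C_e^{D-1}E_{\mathrm{TT},\ast}\delta^{-1}
\]
for a.e. $\alpha$, using \Cref{ass:max-fiber-tensor-compression}.

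\textbf{Interpolation term.} Here we write $f_{\mathcal T}=\mathcal I_1\cdots\mathcal I_Df$. The operators $\mathcal I_i$ act on different coordinates, so they commute. We reorder so that $\mathcal I_d$ is applied last: $f_{\mathcal T}=\mathcal I_d\mathcal P_df$, where $\mathcal P_d=\prod_{i\ne d}\mathcal I_i$. Then
\[
f-f_{\mathcal T}=(f-\mathcal I_df)+\mathcal I_d(I-\mathcal P_d)f .
\]

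For the first piece, \eqref{eq:univariate-derivative-interpolation} gives directly
\[
\|\partial_d(f-\mathcal I_df)\|_{\infty,2}\le C_{a,1}M_{p,\mathcal A}\delta^{p-1}.
\]
This piece contributes with coefficient $1$ rather than $C_e^{D-1}$. Since $C_e\ge1$ (the weights reproduce constants), it is bounded by the stated term.

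For the second piece we use the telescoping identity $I-\mathcal P_d=\sum_{i\ne d}\bigl(\prod_{\text{earlier }j\neq d}\mathcal I_j\bigr)(I-\mathcal I_i)$. We must differentiate $\mathcal I_d$ applied to these terms, and here lies the main obstacle: applying $\partial_d$ to $\mathcal I_d$ naively produces the unfavourable factor $C_{e,1}\delta^{-1}$. To avoid it, we instead pull $\partial_d$ through the operators that act in the other coordinates. The operators $\mathcal I_j$ with $j\neq d$ commute with $\partial_d$, and so does $I-\mathcal I_i$ for $i\neq d$. This leaves terms of the form $\partial_d\mathcal I_d\,g_i$, where $g_i$ is the product of the earlier interpolation operators and $I-\mathcal I_i$ applied to $f$.

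The first thing to try is to write $\partial_d\mathcal I_d g=\mathcal I_d\partial_d g+\partial_d(\mathcal I_dg-g)-(\mathcal I_d\partial_dg-\partial_dg)$. This is awkward, so a cleaner route is to reorder again and place $\mathcal I_d$ first (innermost): $f_{\mathcal T}=\mathcal P_d\mathcal I_df$. Then $\partial_df_{\mathcal T}=\mathcal P_d\,\partial_d\mathcal I_df$, and
\[
\partial_d f-\partial_df_{\mathcal T}
=(I-\mathcal P_d)\partial_df+\mathcal P_d\,\partial_d(f-\mathcal I_df).
\]
The second summand is bounded by $C_e^{D-1}C_{a,1}M_{p,\mathcal A}\delta^{p-1}$, using the $\ell_1$ stability of the $D-1$ operators in $\mathcal P_d$ (each has norm at most $C_e$ on $\|\cdot\|_{\infty,2}$) and \eqref{eq:univariate-derivative-interpolation}. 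The first summand is handled by the telescoping argument of \eqref{eq:tensor-product-interpolation-error} over the $D-1$ directions $i\ne d$, applied to $v=\partial_df$. With \eqref{eq:univariate-vector-interpolation}, each of these $D-1$ terms is at most $C_e^{D-2}C_a\delta^p\|\partial_i^p\partial_df\|_{\infty,2}\le C_e^{D-1}C_aM_{p+1,\mathcal A}\delta^p$.

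Adding the compression and interpolation contributions gives the column bound $A$, and the Frobenius summation described at the start yields \eqref{eq:epsilon-one-estimate-relative}. Two technical points remain to be checked. First, the differentiation is only a.e., since the weights are merely weakly differentiable; this is compatible with the essential supremum in $\epsilon_1^{\mathrm{rom}}$. Second, passing from $\mathcal K$ to $\mathcal A$ is done exactly as in \Cref{prop:epsilon-zero-estimate}.
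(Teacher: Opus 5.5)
Your final argument is correct and is essentially the paper's proof. It uses the same compression bound via \Cref{lem:max-fiber-contraction} with $\chi_d'$ in place of $\chi_d$, and the same decomposition $\partial_d(f-f_{\mathcal T})=(I-\mathcal P_d)\partial_d f+\mathcal P_d\,\partial_d(f-\mathcal I_d f)$, which is the paper's \eqref{eq:jacobian-interpolation-decomposition} with $\mathcal I_{-q}$. It then telescopes over $i\ne d$ and uses the Frobenius bound for the factor $\sqrt D$. You can delete the abandoned first reordering with $\mathcal I_d$ outermost. Your remark that $C_e\ge1$ (from exactness of $\mathcal I_i$ on constants) is what justifies relaxing $C_e^{D-2}$ to $C_e^{D-1}$, a step the paper takes silently.
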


\begin{proof}
Let $J_{\mathcal T}=\partial f_{\mathcal T}/\partial\alpha$.
Differentiating the contractions that define $f_{\mathcal T}$ and
$\widehat f$ gives, for $q=1,\ldots,D$, and a.e.
$\alpha\in\mathcal A$,
\begin{align}
\left\|
\partial_q(f_{\mathcal T}-\widehat f)(\alpha)
\right\|_2
&\leq
\|\mathcal T-\widehat{\mathcal T}\|_\ast
\|\chi_q'(\alpha_q)\|_{\ell_1}
\prod_{i\ne q}\|\chi_i(\alpha_i)\|_{\ell_1}
\notag\\
&\leq
C_{e,1}C_e^{D-1}\delta^{-1}
\|\mathcal T-\widehat{\mathcal T}\|_\ast.
\label{eq:differentiated-tensor-contraction-error}
\end{align}
Since these vectors are the columns of
$J_{\mathcal T}-\widehat J$, the spectral norm is bounded by the
Frobenius norm, and hence
\begin{equation}
\operatorname*{ess\,sup}_{\alpha\in\mathcal A}
\|J_{\mathcal T}(\alpha)-\widehat J(\alpha)\|_2
\leq
\sqrt D\,C_{e,1}C_e^{D-1}\delta^{-1}
\|\mathcal T-\widehat{\mathcal T}\|_\ast.
\label{eq:compressed-tensor-jacobian-error}
\end{equation}

It remains to estimate the Jacobian error of the uncompressed interpolant.
For fixed $q$, set
$\mathcal I_{-q}=\prod_{i\ne q}\mathcal I_i$.  Since interpolation in the
other parameter directions commutes with $\partial_q$, from the definition of $f_{\mathcal T}$ it follows  that
\[
\partial_q f_{\mathcal T}
=\mathcal I_{-q}\partial_q(\mathcal I_q f).
\]
Consequently,
\begin{equation}
\partial_q(f-f_{\mathcal T})
=(I-\mathcal I_{-q})\partial_q f
+\mathcal I_{-q}\partial_q(f-\mathcal I_q f)
\qquad\text{a.e. in }\mathcal A.
\label{eq:jacobian-interpolation-decomposition}
\end{equation}
Now, similar to \eqref{eq:tensor-product-interpolation-error},  the telescoping expansion of $I-\mathcal I_{-q}$, followed by
\eqref{eq:univariate-vector-interpolation} applied to $\partial_q f$, gives
\begin{equation}
\|(I-\mathcal I_{-q})\partial_q f\|_{\infty,2}
\leq
(D-1)C_aC_e^{D-1}M_{p+1,\mathcal A}\delta^p.
\label{eq:cross-direction-jacobian-interpolation-error}
\end{equation}
Here the mixed derivative
$\partial_i^p\partial_q f$ required the additional order of parametric
regularity from \Cref{ass:interpolation-regularity}.  For the second
term in~\eqref{eq:jacobian-interpolation-decomposition}, the stability of
$\mathcal I_{-q}$ and
\eqref{eq:univariate-derivative-interpolation} yield
\begin{equation}
\|\mathcal I_{-q}\partial_q(f-\mathcal I_q f)\|_{\infty,2}
\leq
C_{a,1}C_e^{D-1}M_{p,\mathcal A}\delta^{p-1}.
\label{eq:same-direction-jacobian-interpolation-error}
\end{equation}
Combining~\eqref{eq:jacobian-interpolation-decomposition}--
\eqref{eq:same-direction-jacobian-interpolation-error} columnwise and again
using the Frobenius norm gives
\begin{align}
\operatorname*{ess\,sup}_{\alpha\in\mathcal A}
\|J(\alpha)-J_{\mathcal T}(\alpha)\|_2
&\leq
\sqrt D\,C_e^{D-1}
\left(
C_{a,1}M_{p,\mathcal A}\delta^{p-1}
+(D-1)C_aM_{p+1,\mathcal A}\delta^p
\right).
\label{eq:full-tensor-jacobian-interpolation-error}
\end{align}
Finally, the triangle inequality for
$\widehat J-J=(\widehat J-J_{\mathcal T})+(J_{\mathcal T}-J)$,
the tensor bound~\eqref{eq:max-fiber-tensor-compression}, and
$\mathcal K\subset\mathcal A$ prove
\eqref{eq:epsilon-one-estimate-relative}.
\end{proof}

Assuming $\delta\lesssim1$, the two estimates can be
written in a more compact way,
\begin{align}
\epsilon_0^{\mathrm{rom}}
&\leq C_0\bigl(E_{\mathrm{TT},\ast}+\delta^p\bigr),
\label{eq:epsilon-zero-uniform-grid}\\
\epsilon_1^{\mathrm{rom}}
&\leq C_1\bigl(\delta^{-1}E_{\mathrm{TT},\ast}
+\delta^{p-1}\bigr),
\label{eq:epsilon-one-uniform-grid}
\end{align}
where $C_0$ and $C_1$ are independent of $\delta$ and
$\varepsilon_{\mathrm{TT},\ast}$.  The loss of one power of the mesh size parameter in the derivative error estimate, compared with the error estimate for the map itself, is typical in interpolation error analysis.

We finally combine the surrogate estimates with the inversion stability
result to show the main TROM error estimate.  Set
\begin{equation}
\theta_\delta
\defeq
\delta^{-1}E_{\mathrm{TT},\ast}+\delta^{p-1}.
\label{eq:combined-surrogate-error-rate}
\end{equation}

\begin{theorem}[Compression-to-inversion estimate]
\label{thm:compression-to-inversion-estimate}
Suppose that \eqref{eq:local-strong-monotonicity} and
\Cref{ass:max-fiber-tensor-compression,ass:interpolation-regularity} hold and that $\delta\leq1$.  Then
\begin{equation}
\|\widehat\alpha_\lambda-\alpha_\lambda\|_2
\leq
\frac{\theta_\delta}{\mu}
\left[
R_{\mathcal K}C_1
+C_0\delta\bigl(L_{\mathcal K}+C_1\theta_\delta\bigr)
\right].
\label{eq:compression-to-inversion-estimate}
\end{equation}
In particular, if
$E_{\mathrm{TT},\ast}=O(\delta^p)$ as $\delta\to0$, then
\begin{equation}
\|\widehat\alpha_\lambda-\alpha_\lambda\|_2
=O(\delta^{p-1}).
\label{eq:asymptotic-parameter-error-rate}
\end{equation}
\end{theorem}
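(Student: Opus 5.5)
The plan is to combine \Cref{thm:local-trom-reconstruction-error} with the compact surrogate bounds \eqref{eq:epsilon-zero-uniform-grid}--\eqref{eq:epsilon-one-uniform-grid}. All three inputs are already available. Strong convexity \eqref{eq:local-strong-monotonicity} is assumed. \Cref{ass:max-fiber-tensor-compression,ass:interpolation-regularity} give \Cref{prop:epsilon-zero-estimate,prop:epsilon-one-estimate}, and therefore the compact forms once $\delta\le1$. So \eqref{eq:local-parameter-error-bound} holds:
\[
\|\widehat\alpha_\lambda-\alpha_\lambda\|_2\le \mu^{-1}\bigl(R_{\mathcal K}\epsilon_1^{\mathrm{rom}}+(L_{\mathcal K}+\epsilon_1^{\mathrm{rom}})\epsilon_0^{\mathrm{rom}}\bigr).
\]
What remains is to express both ROM errors through $\theta_\delta$ from \eqref{eq:combined-surrogate-error-rate}.

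For $\epsilon_1^{\mathrm{rom}}$, \eqref{eq:epsilon-one-uniform-grid} reads $\epsilon_1^{\mathrm{rom}}\le C_1\theta_\delta$ directly.

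For $\epsilon_0^{\mathrm{rom}}$, I would rewrite \eqref{eq:epsilon-zero-uniform-grid} by factoring out $\delta$:
\[
E_{\mathrm{TT},\ast}+\delta^p=\delta\bigl(\delta^{-1}E_{\mathrm{TT},\ast}+\delta^{p-1}\bigr)=\delta\,\theta_\delta .
\]
This gives $\epsilon_0^{\mathrm{rom}}\le C_0\delta\theta_\delta$.

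Substituting both bounds into \eqref{eq:local-parameter-error-bound} uses only monotonicity of the right-hand side in $\epsilon_0^{\mathrm{rom}}$ and $\epsilon_1^{\mathrm{rom}}$, since all coefficients are nonnegative. It yields
\[
\mu^{-1}\bigl(R_{\mathcal K}C_1\theta_\delta+(L_{\mathcal K}+C_1\theta_\delta)C_0\delta\theta_\delta\bigr).
\]
Factoring out $\theta_\delta$ gives exactly \eqref{eq:compression-to-inversion-estimate}.

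For the asymptotic statement, assume $E_{\mathrm{TT},\ast}\le c\,\delta^p$. Then $\theta_\delta\le(c+1)\delta^{p-1}$, and $\theta_\delta$ stays bounded because $p\ge2$ and $\delta\le1$. The bracket is therefore bounded by $R_{\mathcal K}C_1+C_0(L_{\mathcal K}+C_1(c+1))$. This constant is independent of $\delta$, since $R_{\mathcal K}$ and $L_{\mathcal K}$ are fixed FOM quantities on $\mathcal K$ and $\mu$ does not depend on the TROM. That gives \eqref{eq:asymptotic-parameter-error-rate}.

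The step needing the most care is justifying the compact constants. One has to check that $C_0$ and $C_1$, taken from \eqref{eq:epsilon-zero-estimate-relative}--\eqref{eq:epsilon-one-estimate-relative} with $\delta\le1$ (which absorbs the $\delta^p$ term into $\delta^{p-1}$ in the Jacobian bound), are genuinely independent of $\delta$ and $\varepsilon_{\mathrm{TT},\ast}$. This holds because $M_{p,\mathcal A}$, $M_{p+1,\mathcal A}$ and the interpolation constants are grid-independent by assumption. One also has to note that the minimizers $\alpha_\lambda$ and $\widehat\alpha_\lambda$ lie in $\mathcal K$, as required.
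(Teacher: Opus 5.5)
Your proposal is correct and follows the paper's proof: you rewrite the compact bounds as $\epsilon_0^{\mathrm{rom}}\le C_0\delta\theta_\delta$ and $\epsilon_1^{\mathrm{rom}}\le C_1\theta_\delta$, substitute them into \eqref{eq:local-parameter-error-bound}, and read off the rate from $\theta_\delta=O(\delta^{p-1})$. Your extra checks only make explicit what the paper leaves implicit: that the bracket stays bounded because $p\ge2$ and $\delta\le1$, and that $C_0$ and $C_1$ do not depend on $\delta$.
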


\begin{proof}
By~\eqref{eq:epsilon-zero-uniform-grid}--\eqref{eq:combined-surrogate-error-rate},  we have
$
\epsilon_0^{\mathrm{rom}}\leq C_0\delta\theta_\delta$,
$\epsilon_1^{\mathrm{rom}}\leq C_1\theta_\delta$.
Substitution into~\eqref{eq:local-parameter-error-bound} proves
\eqref{eq:compression-to-inversion-estimate}.  If
$E_{\mathrm{TT},\ast}=O(\delta^p)$, then
$\theta_\delta=O(\delta^{p-1})$, which gives
\eqref{eq:asymptotic-parameter-error-rate}.
\end{proof}

Together with~\eqref{eq:total-parameter-error-bound},
\eqref{eq:compression-to-inversion-estimate} also bounds the TROM contribution
to the total error with respect to $\alpha_{\mathrm{true}}$.

\section{Forward and inverse problems}
\label{s:methods}

We consider two parameter estimation problems used to assess the proposed TROM framework: an inverse heat-transfer problem with a PDE forward model and a parameter-estimation problem governed by the FitzHugh--Nagumo ODE system.

\subsection{Inverse heat transfer in a heterogeneous medium}\label{sec:heat}

As the first example, we consider heat transfer in a heterogeneous medium governed by
\begin{equation}\label{eq:heat}
u_t - \nabla\cdot \bigl(A(x,y;\alpha)\nabla u\bigr) = 0,
\qquad (x,y)\in\Omega,\quad t\in(0,t_{\mathrm{max}}],
\end{equation}

\noindent where $\Omega=[0,2]\times[0,1]$ and $t_{\mathrm{max}}=0.5$. The conductivity field is piecewise constant,
\begin{equation}\label{eq:A}
A(x,y;\alpha)=
\begin{cases}
A_k, & (x,y)\in \Omega_k,\quad k=1,\dots,K,\\[0.2em]
1, & \text{otherwise},
\end{cases}
\end{equation}

\noindent with $A_k=1\mathrm{e}\mathord{-3}$ in all experiments. The inclusions $\Omega_k$ are circular and are parameterized by their centers and radii,
\[
\Omega_k=\{(x,y)\in\mathbb{R}^2 : (x-l_k)^2+(y-m_k)^2\le r_k^2\},
\quad
\alpha = [l_1,m_1,r_1,\dots,l_K,m_K,r_K]^\mathsf{T}.
\]

We consider $K\in\{1,2,3\}$ inclusions, so that $D=3K$. The admissible set $\mathcal A$ is a box determined by bounds on the inclusion centers and radii; we denote the range of the $i$th parameter component by $[\alpha_i^{\mathrm{min}},\alpha_i^{\mathrm{max}}]$.

Let $\Omega_{\mathrm{incl}}=\Omega_1\cup\dots\cup\Omega_K$ and $\Omega_{\mathrm{bulk}}=\Omega\setminus\Omega_{\mathrm{incl}}$. We assume that the inclusions are immersed in $\Omega$, i.e., $\partial\Omega_{\mathrm{incl}}\subset\Omega$. The temperature is prescribed on the left boundary $\Gamma_1=\{(x,y)\in\partial\Omega\,:\,x=0\}$, while homogeneous Neumann conditions are imposed on the remaining boundary:
\begin{equation}\label{eq:bc}
u=1\quad\text{on}\,\,\Gamma_1,
\qquad
\frac{\partial u}{\partial n}=0\quad\text{on}\,\,\partial\Omega\setminus\Gamma_1,
\qquad t\in[0,t_{\mathrm{max}}],
\end{equation}

\noindent with initial condition $u|_{t=0}=0$. Across the inclusion interfaces, we impose continuity of temperature and normal heat flux.

The observation data consist of temperature measurements at $N_s=9$ sensor locations on the Neumann boundary over the time interval $[0,t_{\mathrm{max}}]$:
\[
  \{p_i\}_{i=1,\dots,9}=
\big\{(\tfrac{1}{4},1),\,(\tfrac{3}{4},1),\,(\tfrac{5}{4},1),\,(\tfrac{7}{4},1),\,(\tfrac{1}{8},0),(\tfrac{9}{16},0),\,(\tfrac{17}{16},0),\,(\tfrac{25}{16},0),\, (2,\tfrac{1}{2})\big\}
\]

\noindent Thus, the inverse problem is to recover the inclusion locations and radii from boundary temperature measurements.

\begin{figure}
\centering
\begin{tikzpicture}[scale=3.5, every node/.style={font=\normalem}]
\fill[gray!10] (0,0) rectangle (2,1);
\draw[thick] (0,0) rectangle (2,1);

\draw[very thick,red] (0,0) -- (0,1);
\node[red,left] at (0,0.5) {$u=1$};



\draw[fill=blue!60] (0.3,0.30) circle (0.22);
\fill (0.3,0.30) circle (0.01);

\node[white] at (0.3,0.38) {$(l_1,m_1)$};
\draw (0.3,0.30) -- (0.3,0.08);
\node[white,right] at (0.3,0.18) {$r_1$};
\node at (0.17,0.25) {$\Omega_1$};

\draw[fill=blue!60] (0.60,0.60) circle (0.18);
\node at (0.58,0.65) {$\Omega_2$};

\draw[fill=blue!60] (0.90,0.50) circle (0.20);
\node at (0.90,0.45) {$\Omega_3$};
\node at (1.6,0.85) {$A=1$};
\node at (1.6,0.75) { in the bulk};

\draw[->, very thick, gray!70]
    (1.05,0.55) to[out=20,in=180] (1.3,0.45);

\node[right] at (1.3,0.45) {$A_3=1\mathrm{e}\mathord{-3}$};
\node[right] at (1.3,0.37) {(inclusion)};

\node at (1.35,0.18) {$u(x,y,0)=0$};

\foreach \x/\lab in {0.1250/p_5,0.5625/p_6,1.0625/p_7,1.5625/p_8}
{
    \fill[black] (\x,0) circle (0.015);
    \node[below=2pt] at (\x,0) {$\lab$};
}

\foreach \x/\lab in {0.25/p_1,0.75/p_2,1.25/p_3,1.75/p_4}
{
    \fill[black] (\x,1) circle (0.015);
    \node[above=2pt] at (\x,1) {$\lab$};
}

\fill[black] (2,0.5) circle (0.015);
\node[right=2pt] at (2,0.5) {$p_9$};


\node[blue!70!black,above=8pt] at (1,1) {$\partial_n u=0$};
\node[blue!70!black,below=10pt] at (1,0) {$\partial_n u=0$};
\node[blue!70!black,right=1pt] at (2,0.4) {$\partial_n u=0$};

\end{tikzpicture}
\caption{Forward-problem setup with three inclusions $\Omega_k$, $k=1,2,3$, and sensor locations $p_i$, $i=1,\ldots,9$ for a computational domain $\Omega=[0,2]\times[0,1]$. Dirichlet data are imposed on the left boundary $\Gamma_1$, homogeneous Neumann data on $\partial\Omega\setminus\Gamma_1$, and the inclusions have conductivity $A_k=1\mathrm{e}\mathord{-3}$, while the bulk conductivity is 1.}
\label{fig:domain}
\end{figure}

We also state the continuous mapping framework. Since the interface $\partial\Omega_{\mathrm{incl}}$ is piecewise smooth\footnote{We assume here for simplicity that $\partial\Omega_{\mathrm{incl}}$ is Lipschitz, i.e., inclusions can overlap but cannot ``touch'' each other.} and $\Omega$ is a convex polygon, maximum regularity theory for interface parabolic problems~\cite{meyries2012maximal,pruss2016moving} implies
\[
u \in L^{2}_{\omega}(0,t_{\mathrm{max}}; H^2(\Omega_{\mathrm{incl}}\cup\Omega_{\mathrm{bulk}})),
\]

\noindent where $H^2(\Omega_{\mathrm{incl}}\cup\Omega_{\mathrm{bulk}})$ is a broken Sobolev space and $L^{2}_{\omega}$ is the weighted $L^2$ space with weight $\omega=t^{2(1-\mu)}$, $\mu\in(\tfrac{1}{2},\tfrac{3}{4})$. The restriction $\mu<\tfrac{3}{4}$ is due to the incompatibility between the zero initial condition and the nonhomogeneous Dirichlet data; see, e.g., \cite[Example~3.7(i)]{lindemulder2019maximal}. By Sobolev embedding, the trace of $u$ at each sensor point belongs to $L^{2}_{\omega}(0,t_{\mathrm{max}})$, so the problem fits the abstract setting with
\[
\mathcal{U} = L^{2}_{\omega}(0,t_{\mathrm{max}}; H^2(\Omega_{\mathrm{incl}}\cup\Omega_{\mathrm{bulk}})),
\qquad
\mathcal{Y} = L^{2}_{\omega}(0,t_{\mathrm{max}})^{N_s},
\]

\noindent where $g(\alpha)$ is the solution operator for~\eqref{eq:heat}--\eqref{eq:bc} and $q(u)$ is the trace operator.

\begin{figure}
\centering
\includegraphics[width=0.9\textwidth]{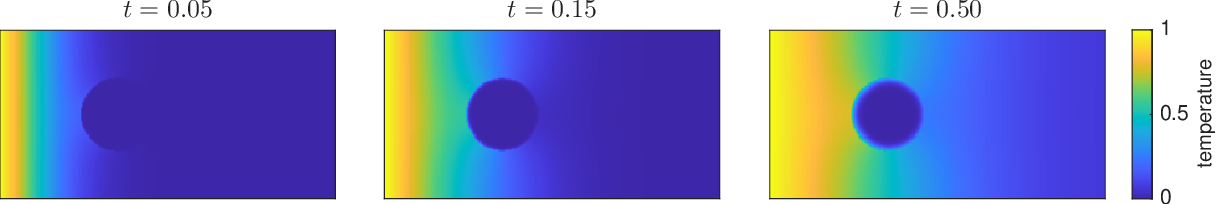}
\caption{Temperature field $u(x,y,t)$ at selected times $t \in \{0.05,0.15,0.50\}$. Heat propagates from the left boundary $\Gamma_1$, while the low-conductivity inclusion locally impedes diffusion and distorts the temperature distribution.}
\label{fig:snapshots}
\end{figure}

\begin{figure}
\centering
\includegraphics[width=0.7\textwidth]{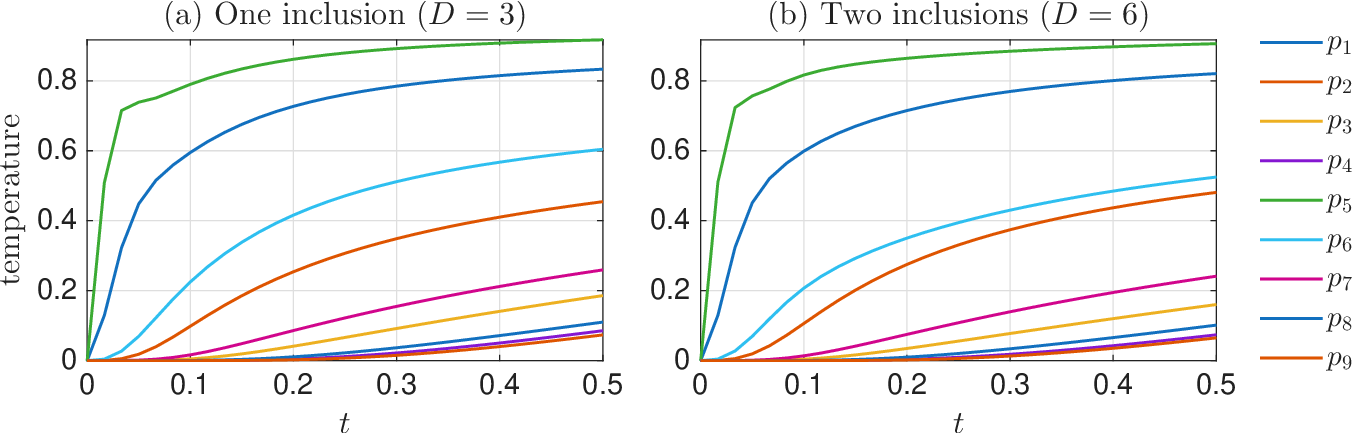}
\caption{Sensor measurements as functions of time at the locations $p_i$, $i=1,\dots,9$, for one inclusion (left; panel (a)) and two non-overlapping inclusions (right; panel (b)).}
\label{fig:sensor-data}
\end{figure}

For numerical experiments, \eqref{eq:heat}--\eqref{eq:bc} is discretized by a second-order FD scheme on a uniform Cartesian grid.  The conductivity is smoothed across a thin layer ($\eta = h$) near the inclusion interface. Time integration uses the second-order backward differentiation formula (BDF2) with a Crank--Nicolson first step. After discretization, the parameter-to-observation map is
$f:\mathcal A\to\mathbb R^{N_{\mathrm{obs}}}$,
$N_{\mathrm{obs}}=N_sN_t$,
and the observation vector satisfies $y_{\mathrm{obs}}\in\mathbb R^{N_{\mathrm{obs}}}$. This discretized map is the full-order model used in the inverse problem. The resulting temperature evolution is shown in \Cref{fig:snapshots}, and representative sensor data are shown in \Cref{fig:sensor-data}.

For the numerical experiments we consider the following parameter domains:
\[
\begin{aligned}
\mathcal A
  &=[\tfrac{3}{5},1]\times[\tfrac{3}{10},\tfrac{7}{10}]\times[\tfrac{1}{10},\tfrac{1}{4}],\\
\mathcal A
&=[\tfrac{3}{10},\tfrac{1}{2}]\times[\tfrac{3}{10},\tfrac{1}{2}]\times[\tfrac{1}{10},\tfrac{1}{4}]\times [\tfrac{7}{10},\tfrac{9}{10}]\times[\tfrac{1}{2},\tfrac{7}{10}]\times[\tfrac{1}{10},\tfrac{1}{4}],\\
\mathcal A
&=[\tfrac{1}{4},\tfrac{2}{5}]\times[\tfrac{1}{4},\tfrac{2}{5}]\times[\tfrac{1}{10},\tfrac{1}{4}]\times [\tfrac{11}{20},\tfrac{7}{10}]\times[\tfrac{9}{20},\tfrac{3}{5}]\times[\tfrac{1}{10},\tfrac{1}{4}] \times [\tfrac{4}{5},\tfrac{19}{20}]\times[\tfrac{2}{5},\tfrac{3}{5}]\times[\tfrac{1}{10},\tfrac{1}{4}],
\end{aligned}
\]

\noindent for $D=3,6,9$, respectively.

\begin{remark}[{Identifiability of multiple inclusions}]
\label[remark]{rem:non-identifiability}
{The quantity $\sigma_0$ in \Cref{rem:large-lambda-strong-convexity} degenerates for structural reasons in the multi-inclusion case.  The conductivity field~\eqref{eq:A} takes the same value $A_k=1\mathrm{e}\mathord{-3}$ in every inclusion, so it depends on $\alpha$ only through the union $\Omega_1\cup\dots\cup\Omega_K$, and overlapping inclusions are admissible. Consequently, for $K\geq2$ any configuration in which one inclusion is contained in the union of the others is non-identifiable: perturbing the center and radius of that inclusion leaves the union, and hence $f$ unchanged throughout a neighborhood; $J$ has a nontrivial null space, and since $\sigma_0$ is a minimum over all of $\mathcal A$, a single such configuration in the admissible box forces $\sigma_0=0$, so that the threshold in \Cref{rem:large-lambda-strong-convexity} reduces to $\lambda>B_{\mathcal A}$.  This is a property of the parameterization rather than of the TROM, and it affects FOM and TROM inversion in the same way.}
\end{remark}

\subsection{Parameter estimation for the FitzHugh--Nagumo model}\label{sec:FHN}

As a second example, we consider an inverse problem governed by the FitzHugh--Nagumo model, a nonlinear two-variable ODE system commonly used as a simplified model of excitable neural dynamics~\cite{fitzhugh1961impulses,nagumo1962active}. The state consists of the membrane potential $u(t)$ and a recovery variable $v(t)$, and the model is written in the form
\begin{equation}
\label{eq:fhn}
\dot u(t)
=
\alpha_2\biggl(u(t)-\frac{u(t)^3}{3}+v(t)+I\biggr),
\quad
\dot v(t)
=
-\frac{1}{\alpha_2}\bigl(u(t)-\alpha_0+\alpha_1v(t)\bigr).
\end{equation}

\noindent $t\in(0,t_{\mathrm{max}}]$, with prescribed initial conditions. Here $I$ is a fixed stimulus, while the unknown parameter vector $\alpha=(\alpha_0,\alpha_1,\alpha_2)^\mathsf{T}$ controls the excitability threshold, recovery response, and time-scale separation, respectively. We take $t_{\mathrm{max}}=100$, $I=-0.4$, and fix the scale-separation parameter at $\alpha_2=3$; hence, in this problem, $\mathcal A$ is a box in $\mathbb R^2$. Following~\cite{rudi2022parameter,villalobos2026neural} we let
$\mathcal A =[-0.2,1]\times [-0.4,1.2]$.

The observation data are time-series measurements of the membrane potential,
\[
y_{\mathrm{obs}} =
\bigl(u(t_1;\alpha),\ldots,u(t_{N_t};\alpha)\bigr)^\mathsf{T}
+\varepsilon_{\mathrm{meas}},
\]

\noindent so that the discretized parameter-to-observation map is
\[
f:\mathcal A\to\mathbb R^{N_t},
\qquad
f(\alpha)=
\bigl(u(t_1;\alpha),\ldots,u(t_{N_t};\alpha)\bigr)^\mathsf{T}.
\]

\noindent The inverse problem is to recover $\alpha$ from the observed voltage trace. The FOM is given by a 4th order Runge–Kutta numerical integrator with $\Delta t = t_{\mathrm{max}}/(N_t-1)$.

\begin{figure}
\centering
\includegraphics[width=0.36\textwidth]{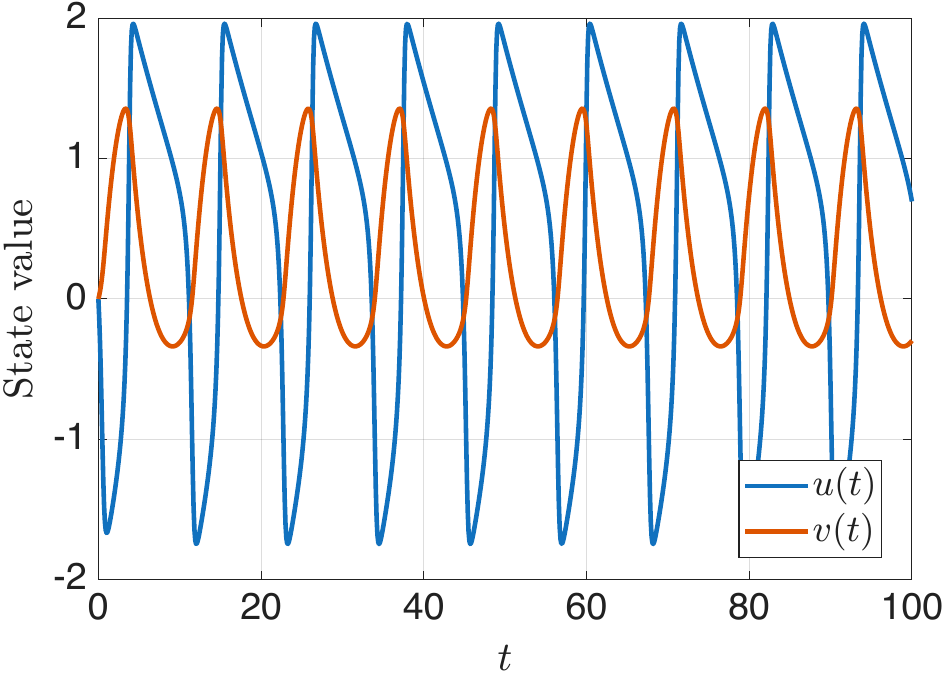}
\quad\includegraphics[width=0.4\textwidth]{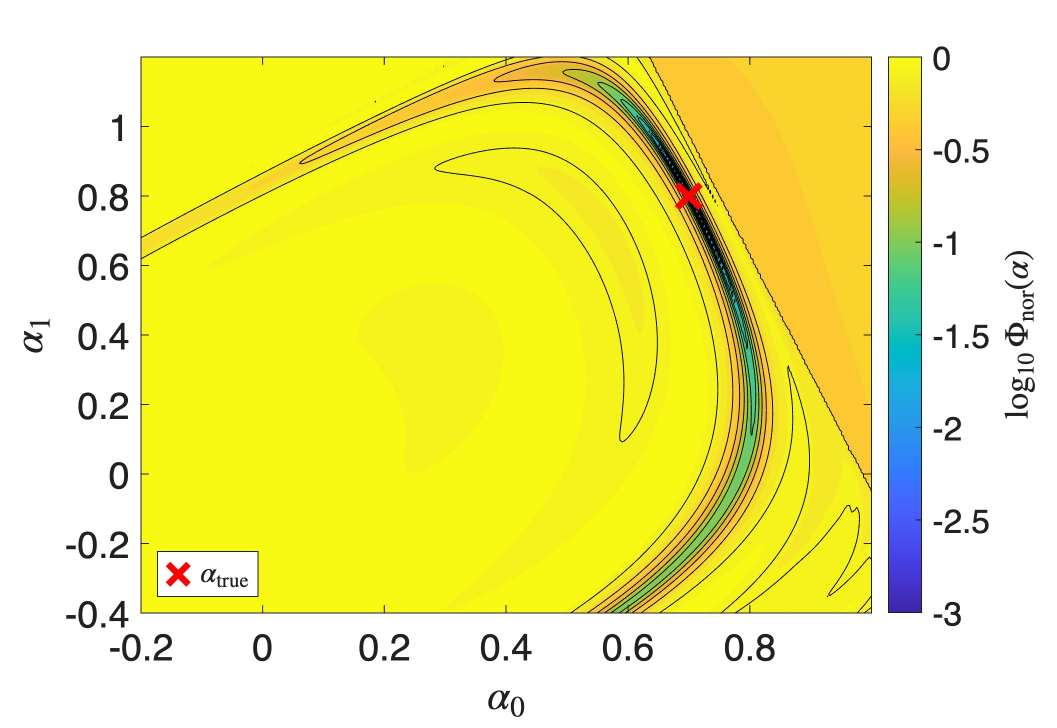}
\caption{FitzHugh--Nagumo inverse problem. Left: Representative solution trajectory. Right: Normalized least-squares objective landscape,
$
\Phi_{\mathrm{nor}}(\alpha)
=
\|f(\alpha)-y_{\mathrm{obs}}\|_2^2/\|y_{\mathrm{obs}}\|_2^2,
$
  evaluated over the displayed parameter plane and plotted in log-scale. Here, $y_{\mathrm{obs}}=f(\alpha_{\mathrm{true}})+\varepsilon_{\mathrm{meas}}$, $\alpha_{\mathrm{true}}=(0.7,0.8)^\mathsf{T}$.
\label{fig:fhn_problem}
}
\end{figure}

Although the forward model is low-dimensional, this inverse problem is challenging for classical optimization-based inversion. The parameter-to-observation map is strongly nonlinear, the parameters enter the dynamics in a coupled way, and small parameter changes can produce sharp changes in the spiking response. As a result, the least-squares objective can be highly nonconvex, with narrow valleys, steep gradients, and multiple local minima; see~\Cref{fig:fhn_problem}. These features motivate alternative approaches based on global, surrogate-assisted, or learning-based parameter identification~\cite{rudi2022parameter,villalobos2026neural}.

Thus, this problem provides a useful test case for assessing whether a deeper integration of TROM into the inversion procedure can mitigate some of the difficulties encountered by classical least-squares minimization.

\section{Numerical results}
\label{s:results}

We present numerical experiments assessing TROM accuracy, inversion robustness, online speedup, and scalability. The tests compare TT-SVD and TT-Cross surrogates, study the effect of TROM error, measurement noise, initialization, and regularization, and demonstrate the additional speedup obtained through the deeper TROM integration into the minimization process.

\subsection{Hardware, Software, and Setup}

All timings are reported in seconds. The computations were performed in MATLAB R2025b using 12 parallel workers on a MacBook Pro with an Apple M4 Pro processor and 24 GB of RAM. For each fixed parameter value, the forward solver computes one LU factorization at the first time step and reuses it at subsequent time steps.

Throughout this section we use the same number of nodes in every parameter direction and write $N\defeq N_1=\cdots=N_D$ for this common value; $h$ denotes the spatial mesh size of the FOM. For the heat-transfer problem the observations consist of $N_s=9$ sensors sampled at $N_t=31$ times, so that $N_{\mathrm{obs}}=N_sN_t=279$ unless stated otherwise.

\subsection{Performance of TROM surrogates}\label{sec:TROMper}
\begin{table}
\tabadjust
\caption{Heat-transfer problem: Offline-stage results for the
single-inclusion case ($D=3$) using TT-SVD and TT-Cross, with
$\varepsilon_{\mathrm{TT}}{}=\varepsilon_{\mathrm{TTC}}=10^{-3}$. The table reports TT ranks,
the sample-based relative TROM error $\mathrm{err}_{\mathrm{trom}}$, and the
offline time in seconds as functions of $N$ and the mesh size $h$.}
\label{t:single_inclusion_offline_table}
\begin{tabular}{ccccr|ccr}
\toprule
& & \multicolumn{3}{c|}{\textbf{TT-SVD}}
  & \multicolumn{3}{c}{\textbf{TT-Cross}} \\
\cmidrule(lr){3-5} \cmidrule(lr){6-8}
$N$ & $h$ & \bf ranks & \bf $\mathrm{err}_{\mathrm{trom}}$ & \bf time
          & \bf ranks & \bf $\mathrm{err}_{\mathrm{trom}}$ & \bf time\\
\midrule
4  & 1/16  & [1\,10\,5\,3\,1] & \tnum{2.790e-03} & 0.16
           & [1\,14\,8\,3\,1] & \tnum{2.914e-03} & 1 \\
   & 1/32  & [1\,11\,6\,3\,1] & \tnum{3.222e-03} & 0.71
           & [1\,14\,7\,4\,1] & \tnum{3.356e-03} & 2 \\
   & 1/64  & [1\,11\,6\,3\,1] & \tnum{3.350e-03} & 6
           & [1\,16\,9\,3\,1] & \tnum{3.307e-03} & 17 \\
   & 1/128 & [1\,11\,6\,3\,1] & \tnum{3.555e-03} & 76
           & [1\,15\,9\,3\,1] & \tnum{3.533e-03} & 225 \\
\midrule
8  & 1/16  & [1\,9\,5\,2\,1]  & \tnum{1.272e-03} & 1
           & [1\,15\,9\,3\,1] & \tnum{9.659e-04} & 2 \\
   & 1/32  & [1\,10\,6\,3\,1] & \tnum{1.154e-03} & 3
           & [1\,13\,8\,3\,1] & \tnum{1.239e-03} & 12 \\
   & 1/64  & [1\,10\,6\,3\,1] & \tnum{1.434e-03} & 31
           & [1\,13\,9\,3\,1] & \tnum{1.201e-03} & 121 \\
   & 1/128 & [1\,11\,6\,3\,1] & \tnum{1.213e-03} & 418
           & [1\,15\,9\,3\,1] & \tnum{1.205e-03} & 2136 \\
\midrule
12 & 1/16  & [1\,9\,5\,2\,1]  & \tnum{1.228e-03} & 2
           & [1\,15\,9\,3\,1] & \tnum{8.572e-04} & 5 \\
   & 1/32  & [1\,10\,6\,2\,1] & \tnum{1.330e-03} & 6
           & [1\,14\,9\,4\,1] & \tnum{9.354e-04} & 34 \\
   & 1/64  & [1\,10\,6\,2\,1] & \tnum{1.311e-03} & 61
           & [1\,15\,9\,3\,1] & \tnum{7.755e-04} & 363 \\
   & 1/128 & [1\,10\,6\,2\,1] & \tnum{1.343e-03} & 816
           & [1\,15\,9\,4\,1] & \tnum{7.165e-04} & 4637 \\
\bottomrule
\end{tabular}
\end{table}

\begin{table}
\tabadjust
\caption{Heat-transfer problem: Offline-stage results for the
two-inclusion case ($D=6$) using TT-SVD and TT-Cross, with
$\varepsilon_{\mathrm{TT}}{}=\varepsilon_{\mathrm{TTC}}=10^{-3}$. The table reports TT ranks,
the sample-based relative TROM error, and the offline time in seconds
as a function of $N$ and the mesh size $h$.}
\label{t:double_inclusion_offline_table}
\begin{tabular}{@{}ccccr|ccr@{}}
\toprule
& & \multicolumn{3}{c|}{\textbf{TT-SVD}}
  & \multicolumn{3}{c}{\textbf{TT-Cross}} \\
\cmidrule(lr){3-5} \cmidrule(l){6-8}
$N$ & $h$
& \bf ranks & \bf $\mathrm{err}_{\mathrm{trom}}$ & \bf time
& \bf ranks & \bf $\mathrm{err}_{\mathrm{trom}}$ & \bf time \\
\midrule
4 & 1/16
& $[1\,14\,17\,16\,10\,6\,3\,1]$ & \tnum{1.554e-03} & 3
& $[1\,21\,29\,30\,19\,11\,4\,1]$ & \tnum{1.286e-03} & 11 \\

  & 1/32
& $[1\,15\,21\,21\,13\,7\,3\,1]$ & \tnum{2.982e-03} & 11
& $[1\,16\,16\,16\,16\,14\,4\,1]$ & \tnum{7.442e-03} & 54 \\

  & 1/64
& $[1\,15\,23\,26\,18\,9\,3\,1]$ & \tnum{3.026e-03} & 116
& $[1\,12\,12\,12\,12\,12\,4\,1]$ & \tnum{9.732e-03} & 484 \\

  & 1/128
& $[1\,16\,25\,31\,21\,10\,3\,1]$ & \tnum{3.974e-03} & 1592
& $[1\,18\,22\,22\,22\,16\,4\,1]$ & \tnum{8.279e-03} & 9121 \\
\midrule
8 & 1/16
& $[1\,13\,15\,14\,9\,5\,3\,1]$ & \tnum{1.267e-03} & 166
& $[1\,17\,18\,18\,17\,10\,3\,1]$ & \tnum{2.439e-03} & 52 \\

  & 1/32
& $[1\,13\,18\,18\,12\,6\,3\,1]$ & \tnum{1.770e-03} & 597
& $[1\,16\,16\,16\,16\,15\,5\,1]$ & \tnum{7.049e-03} & 330 \\

  & 1/64
& $[1\,14\,22\,23\,15\,8\,3\,1]$ & \tnum{1.656e-03} & 6633
& $[1\,20\,22\,22\,22\,22\,6\,1]$ & \tnum{5.008e-03} & 6413 \\

  & 1/128
& -- & -- & --
& $[1\,17\,18\,18\,18\,18\,8\,1]$ & \tnum{8.313e-03} & 60637 \\
\midrule
12 & 1/16
& $[1\,13\,14\,13\,8\,5\,3\,1]$ & \tnum{1.503e-03} & 995
& $[1\,19\,29\,30\,20\,10\,3\,1]$ & \tnum{1.251e-03} & 223 \\

   & 1/32
& $[1\,13\,18\,17\,11\,6\,3\,1]$ & \tnum{1.877e-03} & 7288
& $[1\,18\,18\,18\,18\,16\,5\,1]$ & \tnum{3.807e-03} & 902 \\

   & 1/64
& $[1\,14\,22\,22\,15\,8\,3\,1]$ & \tnum{1.864e-03} & 76737
& $[1\,15\,16\,16\,16\,16\,6\,1]$ & \tnum{7.601e-03} & 7764 \\
\bottomrule
\end{tabular}
\end{table}

\begin{table}
\tabadjust
\caption{Heat-transfer problem: Offline-stage results for the
three-inclusion case ($D=9$) using the TT-Cross surrogate, with
$\varepsilon_{\mathrm{TTC}}=10^{-3}$. The table reports TT ranks,
the number of FOM calls required to build the TT-Cross surrogate,
the sample-based relative TROM error, and the offline time in seconds
as a function of $N$ and the mesh size $h$.}
\label{t:triple_inclusion_offline_table}
\begin{tabular}{cccrcr}
\toprule
$N$ & $h$ & \bf ranks & \bf FOM calls
& \bf $\mathrm{err}_{\mathrm{trom}}$ & \bf offline time \\
\midrule
\multirow{3}{*}{4}
& 1/16 & $[1\,18\,18\,18\,18\,18\,18\,18\,12\,4\,1]$
& \inum{19217}
& \tnum{5.550e-03}
& 54 \\

& 1/32 & $[1\,20\,20\,20\,20\,20\,20\,20\,16\,4\,1]$
& \inum{22749}
& \tnum{1.449e-02}
& 438 \\

& 1/64 & $[1\,19\,20\,20\,20\,20\,20\,20\,16\,4\,1]$
& \inum{22525}
& \tnum{1.321e-02}
& 4450 \\
\midrule
\multirow{3}{*}{6}
& 1/16 & $[1\,18\,18\,18\,18\,18\,18\,18\,11\,4\,1]$
& \inum{41851}
& \tnum{5.535e-03}
& 112 \\

& 1/32 & $[1\,22\,26\,26\,26\,26\,26\,26\,20\,5\,1]$
& \inum{87900}
& \tnum{8.419e-03}
& 1656 \\

& 1/64 & $[1\,14\,14\,14\,14\,14\,14\,14\,14\,6\,1]$
& \inum{28405}
& \tnum{2.071e-02}
& 5503 \\
\bottomrule
\end{tabular}
\end{table}

\begin{table}
\tabadjust
\caption{FitzHugh--Nagumo model: Offline-stage results using TT-SVD.
The sample-based relative TROM error is computed using 100 randomly
chosen off-grid parameter samples. The left block reports results for
varying $N$ with fixed $\varepsilon_{\mathrm{TT}}=10^{-3}$; the right
block reports results for varying $\varepsilon_{\mathrm{TT}}$ with
fixed $N=800$.}
\label{t:fhn_rom_accuracy}
\begin{tabular}{@{}cccr|cccr@{}}
\toprule
\multicolumn{4}{c|}{\textbf{varying $N$, $\varepsilon_{\mathrm{TT}}=10^{-3}$}}
&
\multicolumn{4}{c}{\textbf{varying $\varepsilon_{\mathrm{TT}}$, $N=800$}} \\
\cmidrule(lr){1-4} \cmidrule(l){5-8}
$N$ & \bf ranks & \bf $\mathrm{err}_{\mathrm{trom}}$ & \bf time
& $\varepsilon_{\mathrm{TT}}$ & \bf ranks
& \bf $\mathrm{err}_{\mathrm{trom}}$ & \bf time \\
\midrule
12
& $[1\,123\,12\,1]$  & \tnum{1.053e+00} & 0.58
& $\text{1e--2}$ & $[1\,265\,499\,1]$ & \tnum{2.492e-02} & 133.23 \\

25
& $[1\,382\,25\,1]$  & \tnum{9.593e-01} & 1.65
& $\text{1e--3}$ & $[1\,489\,679\,1]$ & \tnum{2.395e-02} & 147.69 \\

50
& $[1\,474\,50\,1]$  & \tnum{5.504e-01} & 1.60
& $\text{1e--4}$ & $[1\,721\,718\,1]$ & \tnum{2.396e-02} & 197.69 \\

100
& $[1\,486\,100\,1]$ & \tnum{5.965e-01} & 2.94
& $\text{1e--5}$ & $[1\,933\,751\,1]$ & \tnum{2.396e-02} & 239.71 \\

200
& $[1\,488\,195\,1]$ & \tnum{1.373e-01} & 7.30
& --- & --- & --- & --- \\

400
& $[1\,489\,365\,1]$ & \tnum{7.764e-02} & 31.67
& --- & --- & --- & --- \\

800
& $[1\,489\,679\,1]$ & \tnum{2.395e-02} & 147.69
& --- & --- & --- & --- \\

\bottomrule
\end{tabular}
\end{table}

We first assess the offline construction of the TROM surrogates. For the heat-transfer problem, \Cref{t:single_inclusion_offline_table,t:double_inclusion_offline_table,t:triple_inclusion_offline_table} report results for $D=3,6,9$, respectively, using $\varepsilon_{\mathrm{TT}}{}=\varepsilon_{\mathrm{TTC}}=10^{-3}$.
We are interested in the resulting TT ranks, the offline construction time, and the maximum relative TROM error over 100 off-grid test parameters defined as
\[
\mathrm{err}_{\mathrm{trom}}
=
\frac{
\displaystyle\max_{1\leq m\leq 100}
\|\widehat f(\alpha^{(m)})-f(\alpha^{(m)})\|_2
}{
\displaystyle\max_{1\leq m\leq 100}
\|f(\alpha^{(m)})\|_2}
\]
where $\{\alpha^{(m)}\}_{m=1}^{100}$ denotes the set of off-grid test parameters.

For $D=3$, both TT-SVD and TT-Cross are inexpensive and achieve comparable accuracy. TT-SVD is usually faster because the full tensor grid is still affordable, whereas TT-Cross carries adaptive-sampling overhead. For $D=6$, the ranks and offline times increase substantially. TT-SVD often gives smaller TROM errors, but requires the full tensor grid of size $N^D$; TT-Cross avoids this cost by using adaptively selected FOM evaluations. For example, when $N=12$ and $D=6$, TT-SVD requires \inum{2985984} FOM solves, whereas TT-Cross uses \inum{82956}, \inum{46994}, and \inum{39950} FOM evaluations for $h=1/16$, $1/32$, and $1/64$, respectively.

For $D=9$, full tensor construction is no longer practical, so only TT-Cross is reported. With $N=6$, the full tensor grid would contain $6^9 \approx 10^7$ parameter samples, whereas TT-Cross uses between $\tnum{2.8e4}$ and $\tnum{8.79e4}$ FOM calls in the reported tests. Thus, although the offline cost remains substantial, TT-Cross extends the approach to parameter dimensions where TT-SVD is infeasible.

For the FitzHugh--Nagumo model, \Cref{t:fhn_rom_accuracy} shows that the dominant accuracy improvement comes from refining the parameter grid. Decreasing $\varepsilon_{\mathrm{TT}}$ below $10^{-3}$ gives only a minor additional gain while increasing the TT ranks and construction time. Therefore, we use $\varepsilon_{\mathrm{TT}}=10^{-3}$ in the remaining FitzHugh--Nagumo experiments.

Up to a scaling factor, the reported error $\mathrm{err}_{\mathrm{trom}}$ is an (approximation of) the uniform absolute TROM map error $\epsilon_0^{\mathrm{rom}}$ entering the inversion
bound~\eqref{eq:local-parameter-error-bound}. The estimate~\eqref{eq:epsilon-zero-uniform-grid} separates $\epsilon_0^{\mathrm{rom}}$ into tensor-compression and interpolation contributions. The results in \Crefrange{t:single_inclusion_offline_table}{t:fhn_rom_accuracy} are consistent with this analysis. For the heat-transfer problem, the smoothness of the parameter-to-observation map and the weak dependence
of the error on the parameter-grid resolution suggest that the compression term $E_{\mathrm{TT},\ast}$ dominates over the tested grids. Accordingly, $\mathrm{err}_{\mathrm{trom}}$ remains comparable in order of
magnitude to the prescribed compression tolerance $\varepsilon_{\mathrm{TT}}=10^{-3}$, except for $D=9$, $N=6$, where this target was not achieved. For the FitzHugh--Nagumo model, the sharp variation of the map makes the interpolation contribution significant. Refining the parameter grid therefore decreases
$\mathrm{err}_{\mathrm{trom}}$ at a rate approaching quadratic for $p=2$, whereas tightening the compression tolerance $\varepsilon_{\mathrm{TT}}$ has only a minor effect.

\subsection{TROM-based inversion with an informed prior}
\label{exp_1}

We first consider the idealized informed-prior case $\alpha_{\mathrm{ref}}=\alpha_{\mathrm{true}}$. This setting isolates the effects of TROM error, spatial discretization, parameter dimension, and initialization. The initial guess is sampled uniformly from $\mathcal A$, with candidates too close to $\alpha_{\mathrm{true}}$ rejected according to
\[
\left\lVert
\frac{\alpha^{(0)}-\alpha_{\mathrm{true}}}{\alpha^{\mathrm{max}}-\alpha^{\mathrm{min}}}
\right\rVert_2
\ge 0.25,
\]

\noindent where the division is componentwise. For a recovered parameter $\alpha_\lambda$, we define the relative parameter error by
\begin{equation}
\label{eq:rel_param_error}
e_{\mathrm{rel}}(\alpha_\lambda)
=
\frac{
\big(\sum_{j=1}^D|\alpha_{\lambda,j}-\alpha_{\mathrm{true},j}|^2\big)^{1/2}
}{
\big(\sum_{j=1}^D|\alpha_{\mathrm{true},j}|^2\big)^{1/2}
}.
\end{equation}

\noindent Multiple random initializations are used to assess robustness with respect to the starting point.

\subsubsection{Consistency test}
As a solver validation, we first generate the observations with the same TROM surrogate used in the inversion,
$y_{\mathrm{obs}}=\widehat f(\alpha_{\mathrm{true}})$.
For the single-inclusion case with $D=3$, $h=1/64$, $N=8$, and the TT-Cross surrogate, all random initializations converge to $\alpha_{\mathrm{true}}$ up to machine precision, with mean componentwise relative errors of order $10^{-12}$ and four GN iterations. This confirms the consistency of the surrogate evaluation, Jacobian construction, and optimization algorithm.

\subsubsection{Numerical verification of the inversion error estimates}
\label{sec:inversion-error-estimate-verification}
We next verify the compression-to-inversion estimate
\eqref{eq:compression-to-inversion-estimate} and the asymptotic rate
\eqref{eq:asymptotic-parameter-error-rate}.  We use the single-inclusion
heat-transfer problem ($D=3$) with $h=1/64$, $N_t=31$, noise-free data,
$\lambda=0.1$, and $\alpha_{\mathrm{ref}}=(0.8,0.5,0.175)^\mathsf{T}$.
Two-point linear ($p=2$) and three-point quadratic ($p=3$) interpolation
are considered.  For each of 100  off-grid truth parameters, define
\[
e_{\mathrm{inv}}^{(m)}
=
\left\|
\alpha_{\lambda}^{\mathrm{FOM},(m)}
-\alpha_{\lambda}^{\mathrm{TROM},(m)}
\right\|_2,
\qquad
e_{\mathrm{inv}}^{\mathrm{max}}
=
\max_{1\leq m\leq100}e_{\mathrm{inv}}^{(m)}.
\]
For a grid with $N$ nodes in each parameter direction, let
\[
\delta_N
=
\max_{1\leq i\leq D}\max_{1\leq j<N}
\left|\halpha_i^{j+1}-\halpha_i^j\right|.
\]
With $\varepsilon_{\mathrm{TT},\ast}$, the best constant from \eqref{eq:max-fiber-tensor-compression}, and $E_{\mathrm{TT},\ast}$  defined in \eqref{ETT}, the quantity governing~\eqref{eq:compression-to-inversion-estimate} is
$\theta_{\delta_N}=\delta_N^{-1}E_{\mathrm{TT},\ast}
+\delta_N^{p-1}$.

\begin{figure}
\centering
\begin{minipage}[t]{0.45\textwidth}
  \centering
  \includegraphics[width=\linewidth]{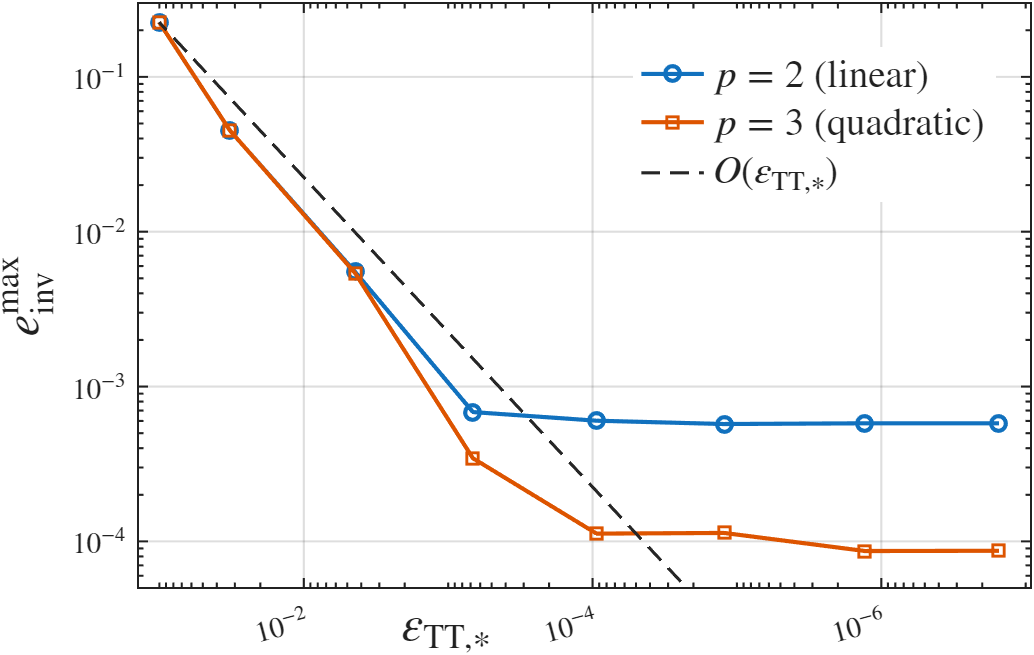}\\[-0.3em]
  {\small (a)}
\end{minipage}\hfill
\begin{minipage}[t]{0.45\textwidth}
  \centering
  \includegraphics[width=\linewidth]{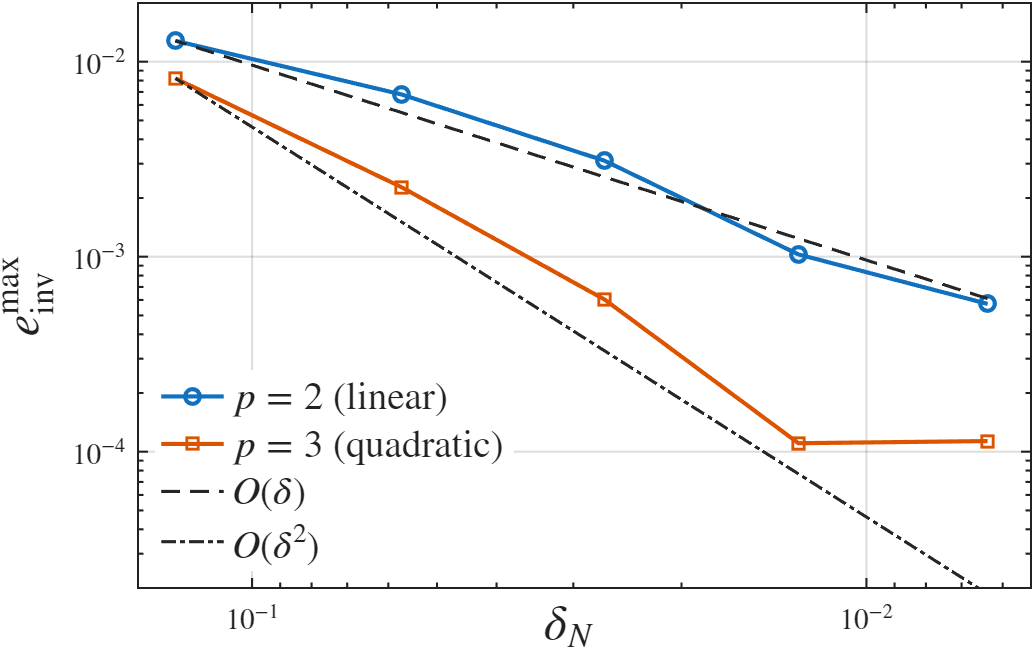}\\[-0.3em]
  {\small (b)}
\end{minipage}
\begin{minipage}[t]{0.45\textwidth}
  \centering
  \includegraphics[width=\linewidth]{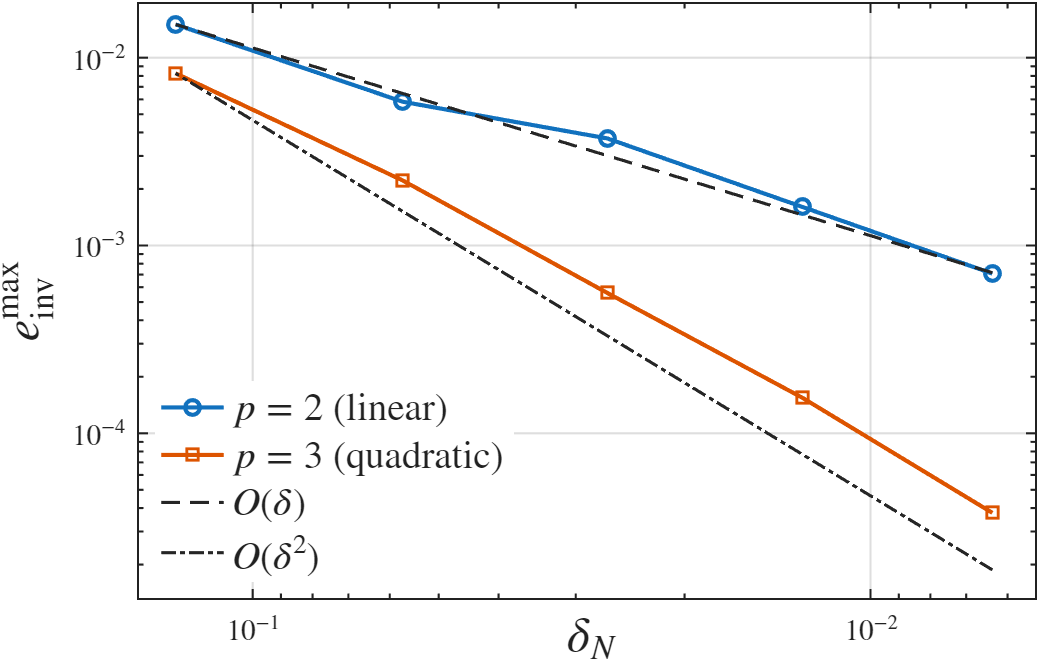}\\[-0.3em]
  {\small (c)}
\end{minipage}
\caption{Numerical verification of the inversion error estimates.
(a) Maximum inversion error versus the relative
compression error $\varepsilon_{\mathrm{TT},\ast}$ for $N=64$. 
(b) Maximum inversion error versus $\delta_N$ for
$\varepsilon_{\mathrm{TT}}=10^{-6}$. 
(c) Maximum inversion error under coupled grid and compression refinement,
with $\varepsilon_{\mathrm{TT},\ast}\approx(0.1/N)^p$; the reference lines
show the rates predicted by~\eqref{eq:asymptotic-parameter-error-rate}.}
\label{fig:inversion-error-estimate-verification}
\end{figure}

In \Cref{fig:inversion-error-estimate-verification}(a), the fine parameter
grid makes the interpolation term small.  For loose compression, the two
interpolation schemes give nearly identical errors and
$e_{\mathrm{inv}}^{\mathrm{max}}$ decreases approximately linearly with
$\varepsilon_{\mathrm{TT},\ast}$.  Since $\delta_N$ and
$\|\mathcal T\|_\ast$ are fixed, this verifies the contribution of
$\delta_N^{-1}E_{\mathrm{TT},\ast}$ to $\theta_{\delta_N}$.  At tighter
compression, the curves reach distinct interpolation floors.  Conversely,
panel~(b) uses a tight TT-SVD tolerance and shows the
$\delta_N^{p-1}$ contribution: the observed rates are $O(\delta_N)$ for
$p=2$ and approximately $O(\delta_N^2)$ for $p=3$ until the finest grid,
where compression is no longer negligible.  Finally, in panel~(c), the
coupled choice $\varepsilon_{\mathrm{TT},\ast}=O(\delta_N^p)$ and the
uniform boundedness of $\|\mathcal T\|_\ast$ give
$E_{\mathrm{TT},\ast}=O(\delta_N^p)$, so both terms in
$\theta_{\delta_N}$ are $O(\delta_N^{p-1})$.  The inversion errors follow the
predicted $O(\delta_N)$ and $O(\delta_N^2)$ rates over the resolved range,
confirming~\eqref{eq:asymptotic-parameter-error-rate}.

\subsubsection{ROM inversion using FOM-generated observations}

Now, we consider a setting in which possibly noisy observations are generated by the FOM,
$y_{\mathrm{obs}}
= f(\alpha_{\mathrm{true}})+\varepsilon_{\mathrm{meas}}$,
whereas inversion is performed using the TROM surrogate. This introduces a model mismatch, characterized by the ROM error level $\delta_{\mathrm{rom}} > 0$.

We consider the single- ($D=3$) and two-inclusion ($D=6$) cases. The true parameters, chosen outside the training grid, are
$\alpha_{\mathrm{true}}
= [0.80, 0.50, 0.22]^\mathsf{T}
$ for $D=3$ and
$\alpha_{\mathrm{true}}
= [0.43,0.41,0.21,0.81,0.60,0.21]^\mathsf{T}$
for $D=6$.
For $D=3$, we use $h=1/64$ and $N=8$; for $D=6$, we use $N=8$ and $h=1/16,1/32,1/64$. Both noise-free observations and noisy observations with $\delta_{\mathrm{meas}}=10^{-2}$ are considered. Since $\alpha_{\mathrm{ref}}=\alpha_{\mathrm{true}}$, we set $\lambda=1$. The results are reported in \Cref{tab:fom_rom_summary}.

\begin{table}
\tabadjust
  \caption{Summary of TROM-based inversion with FOM-generated observations for heat-transfer problem with noise-free (noise=0) and noisy data (noise=1). Reported quantities are averaged over 10 random initializations and, for noisy tests, also over five noise realizations. We report the mean final residual, mean relative parameter error, standard deviation of the parameter error, mean number of iterations, and mean inference time in seconds.}
\label{tab:fom_rom_summary}
\begin{tabular}{ccccccccc}\toprule
\bf noise & \bf surrogate & $h$ & $\rho$ & \bf residual & \bf error & \bf error (std) & \bf iter & \bf time \\
\midrule
\rowcolor{gray!30}\multicolumn{9}{c}{$D=3$}\\
\midrule
0 & TT-SVD   & $1/64$ & \tnum{3.923e-03} & \tnum{3.800e-03} & \tnum{5.9321e-04} & \tnum{3.1882e-09} & 4.00 & \tnum{3.60e-04} \\
1 &          &        & \tnum{6.844e-02} & \tnum{6.811e-02} & \tnum{4.3699e-03} & \tnum{1.1870e-03} & 4.00 & \tnum{1.60e-04} \\
0 & TT-Cross &        & \tnum{6.186e-03} & \tnum{6.031e-03} & \tnum{7.6850e-04} & \tnum{6.0867e-09} & 4.00 & \tnum{6.70e-04} \\
1 &          &        & \tnum{6.860e-02} & \tnum{6.806e-02} & \tnum{4.3297e-03} & \tnum{1.5278e-03} & 4.00 & \tnum{2.80e-04} \\
\midrule
\rowcolor{gray!30}
\multicolumn{9}{c}{$D=6$}\\
\midrule
0 &  TT-SVD  & $1/64$ & \tnum{5.150e-03} & \tnum{3.507e-03} & \tnum{1.2017e-03} & \tnum{8.9851e-08} & 4.20 & \tnum{8.00e-04} \\
1 &          &        & \tnum{6.718e-02} & \tnum{6.636e-02} & \tnum{3.2563e-03} & \tnum{1.2620e-03} & 4.22 & \tnum{3.90e-04} \\
0 & TT-Cross & $1/64$ & \tnum{2.718e-02} & \tnum{1.630e-02} & \tnum{4.5308e-03} & \tnum{5.8769e-07} & 5.50 & \tnum{1.05e-03} \\
1 &          &        & \tnum{7.229e-02} & \tnum{6.812e-02} & \tnum{6.4930e-03} & \tnum{9.9790e-04} & 5.56 & \tnum{4.50e-04} \\
\midrule
1 &  TT-Cross & $1/16$ & \tnum{6.682e-02} & \tnum{6.559e-02} & \tnum{3.9553e-03} & \tnum{9.2791e-04} & 4.04 & \tnum{3.80e-04} \\
1 &           & $1/32$ & \tnum{6.951e-02} & \tnum{6.742e-02} & \tnum{5.3239e-03} & \tnum{8.6163e-04} & 4.66 & \tnum{4.10e-04} \\
1 &           & $1/64$ & \tnum{7.229e-02} & \tnum{6.812e-02} & \tnum{6.4930e-03} & \tnum{9.9790e-04} & 5.56 & \tnum{4.50e-04} \\
\midrule
\rowcolor{gray!30}
\multicolumn{9}{c}{$D=9$}\\
\midrule
1 & TT-Cross & $1/16$ & \tnum{6.846e-02} & \tnum{6.691e-02} & \tnum{5.2682e-03} & \tnum{9.9088e-04} & 4.44 & \tnum{1.56e-03} \\
1 &          & $1/32$ & \tnum{6.849e-02} & \tnum{6.546e-02} & \tnum{5.3591e-03} & \tnum{5.7634e-04} & 4.84 & \tnum{7.60e-04} \\
1 &          & $1/64$ & \tnum{1.482e-01} & \tnum{7.510e-02} & \tnum{2.4641e-02} & \tnum{8.5996e-04} & 7.38 & \tnum{8.20e-04} \\
\bottomrule
\end{tabular}
\end{table}

For $D=3$, both TT-SVD and TT-Cross give stable reconstructions in the noise-free and noisy cases. In the noise-free case, the recovered parameters are close to $\alpha_{\mathrm{true}}$; TT-SVD gives a slightly smaller error, consistent with its smaller TROM approximation error at the truth. The final residuals are close to the prescribed discrepancy levels $\rho$ from~\eqref{e:discrepancy2} (see \Cref{sec:discrepancy}).

The same qualitative behavior is observed for $D=6$, although the effect of surrogate error becomes more visible. In the noise-free case, TT-SVD gives smaller parameter error and residual than TT-Cross, while both methods remain stable and require only a few iterations. For noisy data, the residuals again track the prescribed discrepancy levels. The TT-Cross tests with varying $h$ show only mild changes in runtime and iteration count, while the parameter error reflects the combined influence of surrogate error, noise, and spatial discretization. We omit the corresponding TT-SVD and noise-free results for varying $h$, since they exhibit the same trends, with somewhat smaller errors and error standard deviations.

The three-inclusion case ($D=9$) is more challenging; see \Cref{tab:fom_rom_summary}. For $h=1/16$ and $h=1/32$, TT-Cross gives errors comparable to the $D=6$ case and residuals close to the prescribed discrepancy levels. For $h=1/64$, the discrepancy level and parameter error increase noticeably, indicating greater sensitivity to surrogate accuracy, spatial discretization, and regularization. Even then, the online TROM inversion remains inexpensive, with runtimes on the order of $10^{-3}$ seconds.

\subsubsection{FitzHugh--Nagumo model: the effect of TROM optimization} \label{sec:FHN1}

\begin{table}
\tabadjust
  \caption{FHN model inversion  with $\varepsilon_{\mathrm{TT}}=10^{-3}$, $\alpha_{\mathrm{ref}}=\alpha_{\mathrm{true}}=(0.7,0.8)^\mathsf{T}$, $\lambda=0.1$, and $\delta_{\mathrm{meas}}=10^{-2}$. The first block reports statistics over $100$ random initial guesses for $N=800$. The second block reports results obtained from the single run of the optimization solver with the TROM initial guess $\alpha_{\ast}$ for different values of $N$. We denote the estimated parameter vector by $\alpha_{\mathrm{est}}^\mathsf{T}$. }
\label{tab:fhn_initialization}
\begin{tabular}{@{}ccclllr@{}}
\toprule
$N$ & $\alpha_\ast^\mathsf{T}$ & $\alpha_{\mathrm{est}}^\mathsf{T}$ & \multicolumn{2}{c}{\bf residual} &  \bf error (std) & \bf iter\\
& & & \bf initial & \bf final  \\
\midrule
\multicolumn{7}{@{}l}{\textbf{random initialization}} \\
\midrule
  800 & --- & --- & \tnum{5.725e1} & \tnum{1.979e1} & \tnum{1.293e-1} (\tnum{2.283e-1}) & $24.79$  \\
\midrule
\multicolumn{7}{@{}l}{\textbf{TROM initialization}} \\
\midrule
  12 & $(0.7818,0.4727) $ & $(0.7818,0.4601)$ & \tnum{1.163e1}  & \tnum{1.082e1}  & \tnum{3.289e-1} (---) &  7 \\
  24 & $(0.6870,0.8522) $ & $(0.6870,0.8454)$ & \tnum{9.264e0}  & \tnum{7.457e0}  & \tnum{4.446e-2} (---) & 10 \\
  50 & $(0.7061,0.7755) $ & $(0.7061,0.7803)$ & \tnum{3.517e0}  & \tnum{2.246e0}  & \tnum{1.943e-2} (---) &  4 \\
 100 & $(0.6970,0.8121) $ & $(0.6969,0.8104)$ & \tnum{2.013e0}  & \tnum{1.061e0}  & \tnum{1.022e-2} (---) &  6 \\
 200 & $(0.6985,0.8060) $ & $(0.6985,0.8050)$ & \tnum{1.048e0}  & \tnum{5.092e-1} & \tnum{4.944e-3} (---) &  8 \\
 400 & $(0.6992,0.8030) $ & $(0.6993,0.8024)$ & \tnum{6.322e-1} & \tnum{4.319e-1} & \tnum{2.370e-3} (---) &  4 \\
 800 & $(0.6996,0.8015) $ & $(0.6998,0.8006)$ & \tnum{4.788e-1} & \tnum{4.179e-1} & \tnum{5.824e-4} (---) &  6 \\
\bottomrule
\end{tabular}
\end{table}

We now consider FitzHugh--Nagumo model and test the sensitivity of the inverse solver to initialization.

\begin{figure}
\centering
\includegraphics[width=0.5\textwidth]{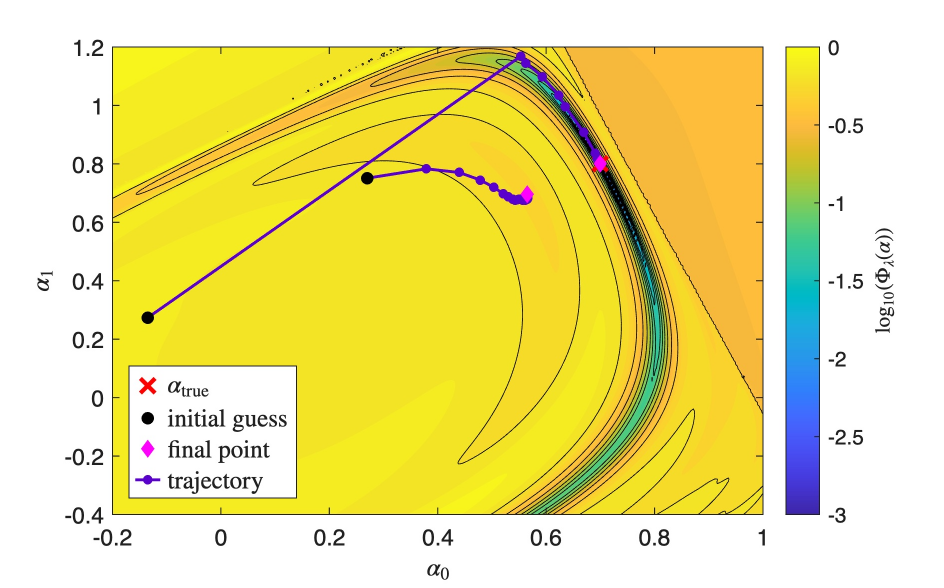}
  \caption{Two representative optimization trajectories of the standard GN optimization method for the random choice of initial guess. We overlay these trajectories on the optimization  landscape (in particular, on $\log_{10}\Phi_\lambda(\alpha)$).}
\label{fig:fhn_random_trajectories}
\end{figure}

For $N=800$, we sample $100$ random initial guesses uniformly from the parameter domain, rejecting values too close to $\alpha_{\mathrm{true}}$ as in the heat-transfer problem above, and solve the regularized inverse problem from each starting point. We compare these runs with the TROM-based initial guess $\alpha_\ast$ obtained for different values of $N$. The corresponding results are reported in \Cref{tab:fhn_initialization}.

As expected, the GN minimization is highly sensitive to the initial guess. About $40\%$ of the random initial guesses converge to local minima away from $\alpha_{\mathrm{true}}$, leading to a relatively large mean error and standard deviation. Representative optimization trajectories on the regularized objective landscape are shown in \Cref{fig:fhn_random_trajectories}. The trajectories illustrate that different initial guesses can lead the GN iteration to different local minima.

In contrast, the purely low-rank tensor-based optimization procedure described in \Cref{sec:tt_initial_guess} finds a minimizer of the objective functional on the parameter grid. This point $\alpha_\ast$ is close to the global minimizer, and the subsequent optional GN iteration initialized at $\alpha_\ast$ improves the minimizer only slightly. The final error in the recovered parameter is consistent with the modeling error of the TROM surrogate; see \Cref{t:fhn_rom_accuracy}.

\subsection{Randomized true parameters with a reference prior}
\label{exp_2}

Next, we study the case in which the true parameter $\alpha_{\mathrm{true}}$ differs from the reference $\alpha_{\mathrm{ref}}$ in the Tikhonov term. We fix $\alpha_{\mathrm{ref}}$ and sample several true parameters in its neighborhood. For each $\alpha_{\mathrm{true}}$, synthetic observations $y_{\mathrm{obs}} = f(\alpha_{\mathrm{true}}) + \varepsilon_{\mathrm{meas}}$ are generated by the FOM, while inversion is performed using a TROM surrogate. We use either
$\alpha^{(0)}=\alpha_{\mathrm{ref}}$
or
$\alpha^{(0)}=\alpha_\ast$,
where $\alpha_\ast$ is the TROM initial guess defined in~\eqref{initialGuess}.

The true parameters are sampled from a Gaussian distribution centered at $\alpha_{\mathrm{ref}}$. More precisely, we let
\begin{equation}\label{eq:AlphaDraw}
(\alpha_{\mathrm{true}})_j
\sim \mathcal{N}\left((\alpha_{\mathrm{ref}})_j,\sigma_j^2\right),
\qquad
\sigma_j=0.60(\alpha_j^{\mathrm{max}}-\alpha_j^{\mathrm{min}}),
\end{equation}

\noindent with samples outside $\mathcal A$ rejected and redrawn. We generate 10 realizations of $\alpha_{\mathrm{true}}$. For each realization $i$, the noiseless observation is
$
y_{\mathrm{true}}^{(i)} = f(\alpha_{\mathrm{true}}^{(i)}).
$
Since the TROM error
$
\varepsilon_{\mathrm{rom}}^{(i)}
=
\widehat f(\alpha_{\mathrm{true}}^{(i)})
-
f(\alpha_{\mathrm{true}}^{(i)})
$
and the total perturbation level
$
\delta_{\mathrm{tot}}^{(i)}
=
(\delta_{\mathrm{meas}}^2+(\delta_{\mathrm{rom}}^{(i)})^2)^{1/2}
$
vary across realizations, we use the prescribed discrepancy level
\[
\rho_\ast = \max_{1\le i\le 10}\rho^{(i)},
\qquad
\rho^{(i)}
=
\delta_{\mathrm{tot}}^{(i)}
\bigl\|y_{\mathrm{true}}^{(i)}\bigr\|_2.
\]

For each realization, we solve the inverse problem over a logarithmic grid of candidate values of $\lambda$ and choose the value for which the final residual best matches $\rho_\ast$. We then rerun the inversion with this selected regularization parameter and report the quantities from the final solve: the relative parameter error~\eqref{eq:rel_param_error}, selected $\lambda$, initial and final residuals, residual reduction
\[
\frac{\|f(\alpha_{\mathrm{ref}})-y_{\mathrm{obs}}\|_2}{\|f(\alpha_\lambda)-y_{\mathrm{obs}}\|_2},
\]

\noindent number of iterations, and runtime.

For the single-inclusion case, we use $D=3$, $h=1/64$, $N=8$, and
\[
\alpha_{\mathrm{ref}}=[0.80,\,0.50,\,0.22]^\mathsf{T}.
\]

\noindent For the two-inclusion case, we use $D=6$, $h=1/64$, $N=8$, and
\[
\alpha_{\mathrm{ref}} = [0.43,\,0.41,\,0.21,\,0.81,\,0.60,\,0.21]^\mathsf{T}.
\]

\noindent For $D=6$, we report only the noisy setting, which is the more practical case.

\begin{table}
\tabadjust
\caption{Summary of randomized-true-parameter inversion results for the single- and two-inclusion cases, all averaged over ten realizations of $\alpha_{\mathrm{true}}$. Results are shown for noise-free (\(\mathrm{noise}=0\)) and noisy (\(\mathrm{noise}=1\)) data using TT-SVD and TT-Cross surrogates. Reported quantities include the mean relative parameter error, selected regularization parameter, initial and final residuals, residual reduction, iteration count, and runtime.}
\label{tab:exp2_random_truth_summary_max_fixed}
\begin{tabular}{cccccccrrc}
\toprule
\bf noise & \bf surrogate & $\rho_\ast$ & \bf error & $\lambda$ & \multicolumn{3}{c}{\bf residual} & \bf iter & \bf time \\
\multicolumn{5}{c}{} & \bf initial & \bf final & \bf drop & & \\
\midrule
\rowcolor{gray!30}\multicolumn{10}{c}{$D=3$}\\
\midrule
  0 & TT-SVD   & \tnum{4.607e-3} & \tnum{9.453e-3} & \tnum{1.992e-2} & \tnum{1.556e-1} & \tnum{4.604e-3} & \fnum{33.81} & \fnum{4.20} & \fnum{0.044}\\
  1 &          & \tnum{6.894e-2} & \tnum{3.833e-2} & \tnum{1.136e-1} & \tnum{1.703e-1} & \tnum{6.894e-2} & \fnum{2.47}  & \fnum{4.50} & \fnum{0.042}\\
  0 & TT-Cross & \tnum{6.547e-3} & \tnum{1.476e-2} & \tnum{3.336e-2} & \tnum{1.553e-1} & \tnum{6.536e-3} & \fnum{23.77} & \fnum{4.60} & \fnum{0.045}\\
  1 &          & \tnum{6.910e-2} & \tnum{4.063e-2} & \tnum{1.348e-1} & \tnum{1.700e-1} & \tnum{6.910e-2} & \fnum{2.46}  & \fnum{4.20} & \fnum{0.043}\\
\midrule
\rowcolor{gray!30}\multicolumn{10}{c}{$D=6$}\\
\midrule
  1 & TT-SVD   & \tnum{6.788e-2} & \tnum{2.939e-2} & \tnum{1.662e-1} & \tnum{2.771e-1} & \tnum{6.787e-2} & \fnum{4.08} & \fnum{5.00} & \fnum{0.123}\\
  1 & TT-Cross & \tnum{7.044e-2} & \tnum{3.866e-2} & \tnum{2.570e-1} & \tnum{2.788e-1} & \tnum{7.040e-2} & \fnum{3.96} & \fnum{8.10} & \fnum{0.180}\\
\bottomrule
\end{tabular}
\end{table}

\Cref{tab:exp2_random_truth_summary_max_fixed} collects results for $D=3$ and $D=6$. For $D=3$ both surrogates achieve final residuals very close to $\rho_\ast$. In the noise-free setting, the residual reduction is large, especially for TT-SVD, while in the noisy setting the reduction is naturally smaller because $\rho_\ast$ is dominated by the measurement-noise level. The TT-SVD surrogate gives smaller parameter errors in the noise-free case, whereas the two surrogates produce comparable errors in the noisy case.

For $D=6$, the problem is more challenging. The selected regularization parameters are larger on average, and the residual reduction is smaller than in the single-inclusion noise-free experiments. Both surrogates still reach their prescribed discrepancy levels, but TT-Cross requires substantially more iterations and gives larger parameter errors than TT-SVD. This indicates that the increased dimension and stronger parameter coupling make the optimization more sensitive to the TROM approximation error.

\subsubsection{Three-inclusion case ($D=9$)}

For the three-inclusion case, we use $D=9$, $h=1/64$, $N=6$, and
\[
\alpha_{\mathrm{ref}} = [0.31,\,0.30,\,0.21,\,0.61,\,0.51,\,0.18,\,0.85,\,0.49,\,0.18]^\mathsf{T}.
\]

\noindent Because of the high parameter dimension, TT-SVD construction is not feasible in this setting, and we restrict attention to the TT-Cross surrogate. We consider the noisy case with fixed prescribed discrepancy level $\rho_\ast = \tnum{1.468e-1}$. The regularization parameter is selected from the same logarithmic grid using the discrepancy principle, as above. In \Cref{tab:exp2_TT-Cross_D9_noisy_summary} we show both average statistics and the results for each realization of $\alpha_{\mathrm{true}}$.

\begin{table}
\tabadjust
\caption{TT-Cross inversion results for the three-inclusion case ($D=9$) with relative noise level $\tnum{1.0e-2}$ and fixed discrepancy level $\rho_\ast=\tnum{1.468e-1}$. We report results for ten individual realizations and their average, including the relative parameter error, selected regularization parameter, initial and final residuals, residual reduction, iteration count, and runtime.}
\label{tab:exp2_TT-Cross_D9_noisy_summary}
\begin{tabular}{rcccccrc}
\toprule
\bf draw & \bf error & $\lambda$ & \multicolumn{3}{c}{\bf residual} & \bf iter & \bf time \\
\multicolumn{3}{c}{} & \bf initial & \bf final & \bf drop & \multicolumn{2}{c}{}\\
\midrule
1  & \tnum{7.800e-2} & \tnum{7.431e+0}  & \tnum{3.273e-1} & \tnum{1.464e-1} & \fnum{2.24} & 8  & \fnum{0.475}\\
2  & \tnum{8.546e-2} & \tnum{4.622e+0}  & \tnum{4.242e-1} & \tnum{1.468e-1} & \fnum{2.89} & 12 & \fnum{0.400}\\
3  & \tnum{1.087e-1} & \tnum{5.328e+0}  & \tnum{5.072e-1} & \tnum{1.468e-1} & \fnum{3.46} & 9  & \fnum{0.493}\\
4  & \tnum{5.484e-2} & \tnum{3.258e+0}  & \tnum{7.476e-1} & \tnum{1.468e-1} & \fnum{5.09} & 15 & \fnum{0.487}\\
5  & \tnum{7.980e-2} & \tnum{1.008e+1}  & \tnum{4.795e-1} & \tnum{1.468e-1} & \fnum{3.27} & 7  & \fnum{0.440}\\
6  & \tnum{7.758e-2} & \tnum{1.512e+1}  & \tnum{1.813e-1} & \tnum{1.468e-1} & \fnum{1.23} & 5  & \fnum{0.763}\\
7  & \tnum{7.776e-2} & \tnum{2.754e+1}  & \tnum{4.326e-1} & \tnum{1.468e-1} & \fnum{2.95} & 6  & \fnum{0.432}\\
8  & \tnum{6.831e-2} & \tnum{7.388e+1}  & \tnum{1.822e-1} & \tnum{1.468e-1} & \fnum{1.24} & 4  & \fnum{0.413}\\
9  & \tnum{7.886e-2} & \tnum{3.888e+0}  & \tnum{6.433e-1} & \tnum{1.468e-1} & \fnum{4.38} & 12 & \fnum{0.490}\\
10 & \tnum{6.329e-2} & \tnum{1.000e+4}  & \tnum{1.406e-1} & \tnum{1.405e-1} & \fnum{1.00} & 3  & \fnum{0.413}\\
\midrule
\rowcolor{gray!30}
mean & \tnum{7.726e-2} & \tnum{1.015e+3} & \tnum{4.066e-1} & \tnum{1.461e-1} & \fnum{2.77} & \fnum{8.10} & \fnum{0.481}\\
\bottomrule
\end{tabular}
\end{table}

The $D=9$ case confirms the increasing difficulty of the inverse problem in higher dimension. The selected values of $\lambda$ are larger and more variable, although the iteration counts are comparable  with $D=6$. The final residuals remain close to $\rho_\ast$ on average with more variability across realizations, see \Cref{tab:exp2_TT-Cross_D9_noisy_summary}.

\begin{table}
\tabadjust
\caption{Effect of TROM initialization in the noisy two- and three-inclusion tests. We compare initialization from $\alpha_{\mathrm{ref}}$ with initialization from the TROM-based guess $\alpha_\ast$. For each choice, we report averages over ten realizations of the relative parameter error, final residual, and iteration count. }
\label{tab:ttgrid_init_noisy_mean}
\begin{tabular}{c ccc | ccc}
\toprule
& \multicolumn{3}{c|}{$\alpha^{(0)}=\alpha_{\mathrm{ref}}$}
& \multicolumn{3}{c}{$\alpha^{(0)}=\alpha_\ast$} \\
\cmidrule(lr){2-4}\cmidrule(lr){5-7}
$D$ &
\bf error & \bf residual & \bf iter & \bf error & \bf residual & \bf iter \\
\midrule
  $6$ & \tnum{3.866e-02} & \tnum{7.040e-02} & 8.10 & \tnum{3.867e-02} & \tnum{7.041e-02} & \fnum{5.60} \\
  $9$ & \tnum{7.726e-02} & \tnum{1.461e-01} & 8.10 & \tnum{7.865e-02}& \tnum{1.473e-01}& \fnum{7.00} \\
\bottomrule
\end{tabular}
\end{table}

The total online runtime increases with the parameter dimension primarily because higher-dimensional cases require more optimization iterations. In contrast, the runtime per iteration grows only moderately with $D$, since the dominant online computations use the TROM surrogate rather than the full-order model.

We also repeated the noisy $D=6$ and $D=9$ TT-Cross tests using the TROM-provided initial guess from \Cref{sec:tt_initial_guess}. All other settings were left unchanged. For $D=6$, the mean iteration count drops from $8.10$ to $5.60$, while the mean parameter error and final residual remain almost the same. For $D=9$, the improvement is more modest, with the mean iteration count decreasing from $8.10$ to $7.00$; see \Cref{tab:ttgrid_init_noisy_mean}.

\subsubsection{FitzHugh--Nagumo model: the effect of TROM optimization}

\begin{table}
\tabadjust
\caption{FHN model inversion with $\varepsilon_{\mathrm{TT}}=10^{-3}$, $\alpha_{\mathrm{ref}}=(0.4,0.4)^\mathsf{T}$, and $\delta_{\mathrm{meas}}=10^{-2}$. The first block reports statistics obtained when $\alpha_{\mathrm{ref}}$ is used as the initial guess in the GN minimization procedure. The second block reports statistics obtained using the TROM optimization step for surrogates with different values of $N$. The residuals, errors, and iteration counts are averaged over $10$ runs with randomly sampled $\alpha_{\mathrm{true}}$.}
\label{tab:fhn_exp3_initialization}
\begin{tabular}{@{}rlllrrr@{}}
\toprule
  $N$ & $\rho_\ast$ & \multicolumn{2}{c}{\bf residual} & \bf error & \bf error std & \bf iter \\
 &  & \bf initial & \bf final \\
\midrule
\multicolumn{7}{@{}l}{\textbf{reference initialization}} \\
\midrule
 $800$ & $\tnum{5.204e-1}$ & $\tnum{4.897e1}$ & $\tnum{2.000e1}$ & $\tnum{2.057e-1}$ & $\tnum{2.624e-1}$ & $24.20$ \\
\midrule
\multicolumn{7}{@{}l}{\textbf{TROM initialization}} \\
\midrule
$12$  & $\tnum{4.452e1}$  & $\tnum{1.164e1}$  & $\tnum{1.028e1}$  & $\tnum{3.494e-1}$ & $\tnum{1.972e-1}$ & $7.70$ \\
$24$  & $\tnum{3.048e1}$  & $\tnum{3.235e0}$  & $\tnum{2.327e0}$  & $\tnum{2.771e-1}$ & $\tnum{4.024e-1}$ & $5.10$ \\
$50$  & $\tnum{9.273e0}$  & $\tnum{2.303e0}$  & $\tnum{1.573e0}$  & $\tnum{2.925e-1}$ & $\tnum{5.152e-1}$ & $5.70$ \\
$100$ & $\tnum{3.783e0}$  & $\tnum{1.816e0}$  & $\tnum{1.438e0}$  & $\tnum{2.087e-1}$ & $\tnum{3.036e-1}$ & $7.00$ \\
$200$ & $\tnum{2.317e0}$  & $\tnum{1.254e0}$  & $\tnum{1.296e0}$  & $\tnum{1.897e-1}$ & $\tnum{2.752e-1}$ & $4.40$ \\
$400$ & $\tnum{8.475e-1}$ & $\tnum{7.985e-1}$ & $\tnum{7.135e-1}$ & $\tnum{8.616e-2}$ & $\tnum{1.438e-1}$ & $6.30$ \\
$800$ & $\tnum{5.204e-1}$ & $\tnum{5.753e-1}$ & $\tnum{5.066e-1}$ & $\tnum{3.257e-2}$ & $\tnum{4.493e-2}$ & $3.80$ \\
\bottomrule
\end{tabular}
\end{table}

Here, we consider FitzHugh--Nagumo inversion in the case $\alpha_{\mathrm{true}}\neq \alpha_{\mathrm{ref}}$. The reference parameter is fixed at $\alpha_{\mathrm{ref}}=(0.4,0.4)^\mathsf{T}$, and ten $\alpha_{\mathrm{true}}$ parameters are sampled in its neighborhood according to~\eqref{eq:AlphaDraw}. The prescribed discrepancy level $\rho_\ast$ is fixed as the maximum target residual over the $10$ random realizations.

\Cref{tab:fhn_exp3_initialization} shows that initializing the GN iteration at $\alpha_{\mathrm{ref}}$ leads, on average, to poor parameter recovery. In fact, in $4$ out of $10$ runs with randomly sampled $\alpha_{\mathrm{true}}$, the solver converges to a false local minimum. The accuracy of the TROM-based inversion improves as $N$ increases. For the same surrogate complexity, $N=800$, the tensor-based optimization procedure provides significantly more accurate parameter estimates with much smaller variation. The subsequent optional GN iteration initialized at $\alpha_\ast$ further improves the minimizer slightly.

\subsection{TROM vs FOM inversion}

We next compare FOM inversion with TT-SVD and TT-Cross ROM inversions in the one-inclusion setting. The goal is to assess how TROM approximation error affects the inverse solution and how much online speedup is obtained relative to the full-order inversion. We consider two noise-free experiments. In the first experiment, the reference parameter coincides with the truth,
\[
\alpha_{\mathrm{true}}=\alpha_{\mathrm{ref}}=[0.80,\,0.50,\,0.22]^\mathsf{T};
\qquad
\lambda=1,
\]

\noindent the same ten random initial guesses are used for all inversion models. In the second, $\alpha_{\mathrm{ref}}$ is fixed but $\alpha_{\mathrm{true}}$ is sampled from the Gaussian distribution as described above; all runs are initialized with $\alpha^{(0)}=\alpha_{\mathrm{ref}}$. In both experiments, observations are generated by the FOM, $y_{\mathrm{obs}} = f(\alpha_{\mathrm{true}})$.

The results are reported in \Cref{tab:rom_fom_comparison}. For the FOM inversion, we compare two ways of forming the GN Jacobian. The first, denoted FOM-FD-GN, uses FDs to assemble the Jacobian. The second, denoted FOM-Sens-GN, assembles the Jacobian from forward sensitivity equations~\cite{hinze2009optimization} and therefore requires fewer additional FOM evaluations per iteration. This avoids FD perturbations and reuses the factorizations from the forward solve. We also include the average number of ``evaluations'', which denotes the number of FOM or TROM evaluations used in the final inversion solve with the preselected $\lambda$.

\begin{table}
\caption{Comparison of FOM and TROM inversions for the noise-free single-inclusion case. Results are averaged over ten runs for two choices of reference parameter, $\alpha_{\mathrm{ref}}=\alpha_{\mathrm{true}}$ and $\alpha_{\mathrm{ref}}\ne\alpha_{\mathrm{true}}$. The FOM is solved using either FD GN derivatives (FOM-FD-GN) or forward-sensitivity derivatives (FOM-Sens-GN), while the ROM inversions use TT-SVD and TT-Cross surrogates. Reported quantities include the relative parameter error, final residual, iteration count, number of function evaluations, time-to-solution, and speedup.}
\label{tab:rom_fom_comparison}
\tabadjust
\begin{tabular}{llcccrcc}
\toprule
$\alpha_{\mathrm{ref}}$ & \bf model & \bf error & \bf residual & \bf iter & \bf evals & \bf time & \bf speedup \\
\midrule
$\alpha_{\mathrm{true}}$
& FOM-FD-GN    & \(1.81\mathrm{e}{-12}\) & \(4.12\mathrm{e}{-13}\) & 4.00  & 29.00 & 5.54794 & -- \\
& FOM-Sens-GN  & \(1.80\mathrm{e}{-12}\) & \(4.11\mathrm{e}{-13}\) & 4.00  & 9.00  & 2.85906 & \(1.94{\times}\) \\
& TT-SVD       & \(5.93\mathrm{e}{-04}\) & \(5.56\mathrm{e}{-04}\) & 4.00  & 5.00  & 0.00299 & \(1856{\times}\) \\
& TT-Cross     & \(7.69\mathrm{e}{-04}\) & \(8.83\mathrm{e}{-04}\) & 4.00  & 5.00  & 0.00087 & \(6377{\times}\) \\
\midrule
$\neq\alpha_{\mathrm{true}}$
& FOM-FD-GN    & \(9.86\mathrm{e}{-02}\) & \(8.24\mathrm{e}{-03}\) & 4.40  & 31.80 & 6.09507 & -- \\
& FOM-Sens-GN  & \(9.86\mathrm{e}{-02}\) & \(8.24\mathrm{e}{-03}\) & 4.40  & 9.80  & 3.17046 & \(1.92{\times}\) \\
& TT-SVD       & \(9.85\mathrm{e}{-02}\) & \(8.25\mathrm{e}{-03}\) & 4.40  & 5.40  & 0.00415 & \(1469{\times}\) \\
& TT-Cross     & \(9.86\mathrm{e}{-02}\) & \(8.24\mathrm{e}{-03}\) & 4.40  & 5.40  & 0.00144 & \(4233{\times}\) \\
\bottomrule
\end{tabular}
\end{table}

When $\alpha_{\mathrm{ref}}=\alpha_{\mathrm{true}}$, the FOM inversion recovers the parameter to nearly machine precision. This is expected because the data are noise-free and the true parameter is also used as the reference parameter in the Tikhonov term. Both TROM inversions recover parameters close to the truth at dramatically lower online cost, with errors limited by the surrogate approximation.

The forward-sensitivity FOM implementation gives essentially the same reconstruction as the FD FOM implementation, while reducing the FOM runtime by about a factor of two. Nevertheless, the TROM inversions remain orders of magnitude faster than both FOM variants.

When $\alpha_{\mathrm{ref}}\ne\alpha_{\mathrm{true}}$, the reconstruction error is dominated by the regularization bias toward $\alpha_{\mathrm{ref}}$. In this setting, both TROM solvers closely reproduce the FOM inversion behavior while reducing the runtime by several orders of magnitude.

\subsection{Speedup from deeper TROM integration}

\begin{table}
\tabadjust
\caption{Runtime comparison of a plain and integrated TROM inversion. Here ``Full/FD'' uses the observation-space objective with a FD Jacobian, whereas ``Reduced/TROM'' uses~\eqref{eq:inverse_problem_TROM} and the reduced TROM GN quantities. The TT-Cross surrogate is used in all TROM runs, and the FOM inversion time is included for reference. Reported quantities include $D$, $N$, the ranks $r_2$ and $r_{\mathrm{max}}$, the mean runtime over 50 executions, and the speedup; $N_{\mathrm{obs}}=279$.}
\label{tab:full_vs_reduced_runtime}
\begin{tabular}{lccccccc}
\toprule
\bf ROM & \bf objective/jacobian & $D$ & $N$ & $r_2$ & $r_{\mathrm{max}}$ & \bf time & \bf speedup \\
\midrule
  & Full\,/\,FD      & 3 &  8 & 13 & 13 & $\tnum{4.62e-4}$ &  \\
  & Reduced\,/\,TROM & 3 &  8 & 13 & 13 & $\tnum{1.03e-4}$ & $4.49\times$ \\
\cmidrule(lr){2-8}
  & Full\,/\,FD      & 6 &  8 & 20 & 22 & $\tnum{1.76e-3}$ &  \\
  & Reduced\,/\,TROM & 6 &  8 & 20 & 22 & $\tnum{2.87e-4}$ & $6.13\times$ \\
\cmidrule(lr){2-8}
  & Full\,/\,FD      & 9 &  6 & 14 & 14 & $\tnum{7.31e-3}$ &  \\
  & Reduced\,/\,TROM & 9 &  6 & 14 & 14 & $\tnum{1.78e-3}$ & $4.11\times$ \\
\midrule
  FOM & Full\,/\,FD  & 3 & -- & -- & -- & $\tnum{7.7436}$ & -- \\
\bottomrule
\end{tabular}
\end{table}

\begin{table}
\tabadjust
\caption{Runtime comparison of a plain and integrated TROM inversion for different TT truncation tolerances. The TROM surrogate is constructed using TT-SVD. We report the ranks $r_2$ and $r_{\mathrm{max}}$, the mean runtime over 50 executions, and the speedup. Here, $D=6$, $N=8$, and $N_{\mathrm{obs}}=279$.}
\label{tab:ttsvd_runtime_vs_eps}
\begin{tabular}{cccccc}
\toprule
$\varepsilon_{\mathrm{TT}}$ & \bf objective / jacobian & $r_2$ & $r_{\mathrm{max}}$ & \bf time & \bf speedup \\
\midrule
1e--2 & Full / FD      & 6  & 6   & $\tnum{1.50e-3}$ & -- \\
      & Reduced / TROM & 6  & 6   & $\tnum{1.89e-4}$ & $7.94\times$ \\
\midrule
1e--3 & Full / FD      & 14 & 23  & $\tnum{1.52e-3}$ & -- \\
      & Reduced / TROM & 14 & 23  & $\tnum{2.74e-4}$ & $5.55\times$ \\
\midrule
1e--4 & Full / FD      & 25 & 78  & $\tnum{3.42e-3}$ & -- \\
      & Reduced / TROM & 25 & 78  & $\tnum{6.09e-4}$ & $5.62\times$ \\
\midrule
1e--5 & Full / FD      & 38 & 205 & $\tnum{1.03e-2}$ & -- \\
      & Reduced / TROM & 38 & 205 & $\tnum{1.91e-3}$ & $5.39\times$ \\
\bottomrule
\end{tabular}
\end{table}

In the previous section, we observed that using the TROM surrogate for forward solves gives a dramatic speedup compared with using the FOM, as expected. Here, we study to what extent this efficiency is due to the deeper integration of TROM into the inversion process, beyond its use as an approximate forward map.

\Cref{tab:full_vs_reduced_runtime,tab:ttsvd_runtime_vs_eps} compare plain TROM inversion, where the surrogate is used only for forward evaluations and the Jacobian is computed by finite differences, with the integrated TROM formulation based on~\eqref{eq:inverse_problem_TROM} and the reduced GN quantities. Each solve is repeated 50 times to reduce timing noise.

The integrated formulation consistently reduces the online runtime by a factor of about $4$ to $8$ across the tested cases. The speedup is due mainly to the TROM-based GN quantities, which avoid FD Jacobian evaluations and replace observation-space linear algebra by reduced-coordinate computations.

\subsection{Effect of measurement noise on TROM inversion}
\label{rom_inversion_exp1_noise}

Next, we study the effect of measurement noise on the TROM inversion in the one-inclusion case with
$
\alpha_{\mathrm{ref}}=[0.80,\,0.50,\,0.22]^\mathsf{T}.
$
The true parameter is sampled around $\alpha_{\mathrm{ref}}$, all runs are initialized with $\alpha^{(0)}=\alpha_{\mathrm{ref}}$, and the noiseless observations are generated by the FOM,
$
y_{\mathrm{true}}=f(\alpha_{\mathrm{true}}).
$
Noisy observations are defined by \eqref{eq:noisyObs} (see \Cref{sec:discrepancy}) with various $\delta_{\mathrm{meas}}$. The regularization parameter is chosen using the  discrepancy principle with $\delta_{\mathrm{tot}}=\sqrt{\delta_{\mathrm{meas}}^2+\delta_{\mathrm{rom}}^2},$ $\tau=1$. Here $\delta_{\mathrm{rom}}$ is the fixed relative TROM error estimate obtained from the preceding noise-free random-truth experiment, which was found to be $\delta_{\mathrm{rom}}=\tnum{5.016e-04}$ for TT-SVD, $\delta_{\mathrm{rom}}=\tnum{6.228e-04}$ for TT-Cross.

\begin{table}
\caption{TROM inversion with increasing noise level. All quantities are averaged over 10 runs. Reported quantities include the mean relative parameter error, final residual, iteration count, runtime, and selected regularization parameter $\lambda$.}
\label{tab:rom_noise_practical_merged}
\tabadjust
\begin{tabular}{ccccccl}\toprule
\bf surrogate & $\delta_{\mathrm{meas}}$ & \bf error & \bf residual & \bf iter & \bf time & $\lambda$ \\
\midrule
TT-SVD
& $0.00$ & \tnum{5.5505e-03} & \tnum{5.3579e-04} & \fnum{4.30} & $0.017$ & \tnum{4.8514e-03} \\
& $0.05$ & \tnum{7.0042e-02} & \tnum{5.0001e-02} & \fnum{4.90} & $0.019$ & \tnum{1.5682e+00} \\
& $0.10$ & \tnum{8.2654e-02} & \tnum{1.0000e-01} & \fnum{5.50} & $0.025$ & \tnum{1.3196e+00} \\
& $0.20$ & \tnum{8.9520e-02} & \tnum{1.9999e-01} & \fnum{5.50} & $0.033$ & \tnum{1.0122e+03} \\
& $0.25$ & \tnum{1.1006e-01} & \tnum{2.4991e-01} & \fnum{5.70} & $0.037$ & \tnum{2.0018e+03} \\
& $0.50$ & \tnum{1.1938e-01} & \tnum{4.9971e-01} & \fnum{6.90} & $0.037$ & \tnum{2.0049e+03} \\
\midrule
TT-Cross
& $0.00$ & \tnum{7.0826e-03} & \tnum{6.7040e-04} & \fnum{4.40} & $0.017$ & \tnum{5.6062e-03} \\
& $0.05$ & \tnum{7.3084e-02} & \tnum{5.0002e-02} & \fnum{4.80} & $0.020$ & \tnum{1.6884e+00} \\
& $0.10$ & \tnum{8.2515e-02} & \tnum{9.9999e-02} & \fnum{5.70} & $0.029$ & \tnum{1.2514e+00} \\
& $0.20$ & \tnum{8.7505e-02} & \tnum{1.9997e-01} & \fnum{5.40} & $0.034$ & \tnum{1.0113e+03} \\
& $0.25$ & \tnum{1.0954e-01} & \tnum{2.4992e-01} & \fnum{6.40} & $0.041$ & \tnum{1.0401e+03} \\
& $0.50$ & \tnum{1.1813e-01} & \tnum{4.9972e-01} & \fnum{7.90} & $0.041$ & \tnum{2.0050e+03} \\
\bottomrule
\end{tabular}
\end{table}

\Cref{tab:rom_noise_practical_merged} shows consistent behavior for both TT-SVD and TT-Cross as the noise level increases. The final residual closely matches the prescribed discrepancy level in all tested regimes. For small and moderate noise, the parameter recovery remains stable. For larger noise, the selected regularization parameter increases, which stabilizes the solve but also leads to larger reconstruction errors. The online runtimes remain small and vary only mildly with the noise level.

\subsection{Scalability of TROM inversion}

Finally, we assess how the online TROM inversion scales with the parameter dimension. We consider one-, two-, and three-inclusion cases and use the TT-Cross ROM in all cases with $N=8$. The true parameter is sampled around $\alpha_{\mathrm{ref}}$, all runs are initialized with $\alpha^{(0)}=\alpha_{\mathrm{ref}}$, and the measurement-noise level is fixed at $\delta_{\mathrm{meas}}=0.10$. The regularization parameter is chosen by the same practical discrepancy principle
with
$
\delta_{\mathrm{rom}}=
\tnum{6.228e-04},\,
\tnum{2.457e-03},\,
\tnum{7.951e-03}
$
for $D=3,6,9$, respectively.

\begin{table}
\caption{TT-Cross scalability results for noisy TROM inversion with fixed measurement-noise level $\delta_{\mathrm{meas}}=0.10$. All quantities are averaged over 10 runs. Reported quantities include the mean relative parameter error, final residual, iteration count, online runtime, selected regularization parameter $\lambda$, and offline construction time.}
\tabadjust
\label{tab:scalability_TT-Cross_merged}
\begin{tabular}{cccrrccr}
\toprule
$D$ & \bf error & \bf residual & \bf iter & \bf evals & \bf online time & $\lambda$ & \bf offline time \\
\midrule
$3$ & \tnum{9.5876e-02} & \tnum{9.9975e-02} & \fnum{4.70}  & \fnum{5.70}  & 0.026 & \tnum{1.0011e+03} &    121 \\
$6$ & \tnum{6.2962e-02} & \tnum{1.0003e-01} & \fnum{8.70}  & \fnum{10.80} & 0.198 & \tnum{1.9778e+00} &  6,413 \\
$9$ & \tnum{7.5559e-02} & \tnum{1.0039e-01} & \fnum{10.90} & \fnum{29.10} & 0.297 & \tnum{4.8378e+00} & 13,379 \\
\bottomrule
\end{tabular}
\end{table}

\Cref{tab:scalability_TT-Cross_merged} shows that TT-Cross inversion remains stable as the parameter dimension increases from $D=3$ to $D=9$. The final residual stays close to the prescribed discrepancy level in all cases. The main effect of increasing $D$ is computational: the average iteration count, number of surrogate evaluations, and online runtime all increase. Nevertheless, the online cost remains small compared with the offline TT-Cross construction, which is the dominant computational expense.

\section{Conclusions}
\label{s:conclusions}

We demonstrated that TROM surrogates provide fast, reliable approximations of the parameter-to-observation map, with an error that can be estimated and incorporated into the inversion procedure. In the Tikhonov framework, both TT-SVD and TT-Cross surrogates reproduce full-order inversion at a substantially lower online cost. The experiments also show that the TROM error should be treated as part of the effective data perturbation when selecting the regularization parameter, which leads to stable reconstructions for both noise-free and noisy observations.

The accompanying error-to-inversion analysis makes this control explicit: under local strong convexity of the regularized FOM objective and suitable parametric smoothness, it bounds the perturbation of the recovered parameter in terms of TROM map and Jacobian errors and estimates these errors from tensor compression and interpolation.

A central conclusion is that the low-rank multilinear structure is useful beyond fast forward evaluation. After orthogonalizing the first TT core, the TROM least-squares term can be minimized equivalently in reduced coordinates; the Gauss--Newton residuals and Jacobian products can be assembled from TT contractions without forming the full observation-space Jacobian; and the tensor representation can solve discrete objective minimization problems in TT format. Depending on the problem, this tensor optimization step serves either as a stand-alone approximate minimizer or as a data-informed initialization for a subsequent Gauss--Newton solve, and the numerical results illustrate both roles. In the heat-transfer problem, the method remains effective as the parameter dimension increases, and TT-Cross extends the approach to regimes where full tensor construction by TT-SVD is impractical. In the FitzHugh--Nagumo example, the objective landscape is strongly nonconvex and Gauss--Newton is sensitive to initialization; there, the TROM optimization step substantially improves robustness by finding a better basin of attraction.

The main tradeoff is the offline cost of constructing the TROM surrogate, especially in higher parameter dimension; once it is available, the online inverse solves are inexpensive.

\section*{Disclosure of AI-assisted tools}
During the preparation of this manuscript, the authors used the generative AI tools ChatGPT (OpenAI) and Claude (Anthropic) to assist with language and style editing, correction of typographical and grammatical errors, and the drafting of scripts. All mathematical developments, derivations, modeling, and analyses were carried out by the authors. The authors are responsible for the correctness of all code used in this work; they reviewed and tested this code and verified its functionality. The authors reviewed and edited all AI-assisted content and take full responsibility for the content of this publication.

\bibliographystyle{plainnat}
\bibliography{references}

@article{oseledets2011tensor,
  author      = {Oseledets, Ivan V.},
  title       = {Tensor-Train Decomposition},
  journal     = {SIAM Journal on Scientific Computing},
  volume      = {33},
  number      = {5},
  pages       = {2295--2317},
  year        = {2011},
  OPTdoi      = {10.1137/090752286},
}

@book{hansen2010discrete,
  author      = {Hansen, Per Christian},
  title       = {Discrete Inverse Problems: Insight and Algorithms},
  publisher   = {SIAM},
  address     = {Philadelphia},
  year        = {2010},
  OPTdoi      = {10.1137/1.9780898718836},
}

@book{kaipio2005statistical,
  author      = {Kaipio, Jari and Somersalo, Erkki},
  title       = {Statistical and Computational Inverse Problems},
  series      = {Applied Mathematical Sciences},
  volume      = {160},
  publisher   = {Springer},
  address     = {New York},
  year        = {2005},
  OPTdoi      = {10.1007/b138659},
}

@article{krivorotko2020global,
  author  = {Krivorotko, Olga and Kabanikhin, Sergey and Zhang, Shuhua and Kashtanova, Victoriya},
  title   = {Global and Local Optimization in Identification of Parabolic Systems},
  journal = {Journal of Inverse and Ill-Posed Problems},
  volume  = {28},
  number  = {6},
  pages   = {899--913},
  year    = {2020},
  OPTdoi  = {10.1515/jiip-2020-0083},
}

@book{nocedal2006numerical,
  author      = {Nocedal, Jorge and Wright, Stephen J.},
  title       = {Numerical Optimization},
  edition     = {2},
  publisher   = {Springer},
  address     = {New York, NY},
  year        = {2006},
  OPTdoi      = {10.1007/978-0-387-40065-5},
}

@article{mang2018pdeconstrained,
  author      = {Mang, Andreas and Gholami, Amir and Davatzikos, Christos and Biros, George},
  title       = {{PDE}-constrained optimization in medical image analysis},
  journal     = {Optimization and Engineering},
  volume      = {19},
  number      = {3},
  pages       = {765--812},
  year        = {2018},
  OPTdoi      = {10.1007/s11081-018-9390-9},
}

@article{islam2026tensorial,
  author      = {Islam, Asikul and Mizan, Md Rezwan Bin and Olshanskii, Maxim and Mang, Andreas},
  title       = {Tensorial Reduced-Order Models for Parametric Coupled Reaction-Diffusion Systems: Application to Brain Tumor Growth Modeling},
  journal     = {arXiv preprint arXiv:2603.14101},
  year        = {2026},
  doi         = {10.48550/arXiv.2603.14101},
}

@article{lieberman2010parameter,
  author      = {Lieberman, Chad and Willcox, Karen and Ghattas, Omar},
  title       = {Parameter and State Model Reduction for Large-Scale Statistical Inverse Problems},
  journal     = {SIAM Journal on Scientific Computing},
  volume      = {32},
  number      = {5},
  pages       = {2523--2542},
  year        = {2010},
  OPTdoi      = {10.1137/090775622},
}

@article{burman2026solving,
  author  = {Burman, Erik and Larson, Mats G. and Larsson, Karl and
             Vallin, Jonatan},
  title   = {Solving Inverse Parametrized Problems via Finite Elements
             and Extreme Learning Networks},
  journal = {Computer Methods in Applied Mechanics and Engineering},
  volume  = {460},
  pages   = {119077},
  year    = {2026},
  OPTdoi  = {10.1016/j.cma.2026.119077}
}

@article{ghattas2021learning,
  author      = {Ghattas, Omar and Willcox, Karen},
  title       = {Learning physics-based models from data: perspectives from inverse problems and model reduction},
  journal     = {Acta Numerica},
  volume      = {30},
  pages       = {445--554},
  year        = {2021},
  publisher   = {Cambridge University Press},
  OPTdoi      = {10.1017/S0962492921000064},
}

@article{dolgov2015polynomial,
  title={Polynomial chaos expansion of random coefficients and the solution of stochastic partial differential equations in the tensor train format},
  author={Dolgov, Sergey and Khoromskij, Boris N and Litvinenko, Alexander and Matthies, Hermann G},
  journal={SIAM/ASA Journal on Uncertainty Quantification},
  volume={3},
  number={1},
  pages={1109--1135},
  year={2015},
  publisher={SIAM}
}

@article{villalobos2026neural,
  author      = {Villalobos, German and Rudi, Johann and Mang, Andreas},
  title       = {Neural Networks for {Bayesian} Inverse Problems Governed by a Nonlinear {ODE}},
  journal     = {SIAM Journal on Scientific Computing},
  volume      = {48},
  number      = {3},
  pages       = {C415--C452},
  year        = {2026},
  publisher   = {SIAM},
  OPTdoi      = {10.1137/24M1711480},
}

@article{nagumo1962active,
  author      = {Nagumo, Jinichi and Arimoto, Suguru and Yoshizawa, Shuji},
  title       = {An active pulse transmission line simulating nerve axon},
  journal     = {Proceedings of the IRE},
  volume      = {50},
  number      = {10},
  pages       = {2061--2070},
  year        = {1962},
  publisher   = {IEEE},
  OPTdoi      = {10.1109/JRPROC.1962.288235},
}

@book{hinze2009optimization,
  author      = {Hinze, Michael and Pinnau, Rene and Ulbrich, Michael and Ulbrich, Stefan},
  title       = {Optimization with {PDE} Constraints},
  series      = {Mathematical Modelling: Theory and Applications},
  volume      = {23},
  publisher   = {Springer},
  address     = {Dordrecht},
  year        = {2009},
  OPTdoi      = {10.1007/978-1-4020-8839-1},
}

@article{fitzhugh1961impulses,
  author      = {FitzHugh, Richard},
  title       = {Impulses and physiological states in theoretical models of nerve membrane},
  journal     = {Biophysical Journal},
  volume      = {1},
  number      = {6},
  pages       = {445--466},
  year        = {1961},
  publisher   = {Elsevier},
  OPTdoi      = {10.1016/S0006-3495(61)86902-6},
}

@inproceedings{rudi2022parameter,
  author      = {Rudi, Johann and Bessac, Julie and Lenzi, Amanda},
  title       = {Parameter estimation with dense and convolutional neural networks applied to the {FitzHugh--Nagumo} {ODE}},
  booktitle   = {Mathematical and Scientific Machine Learning},
  series      = {Proceedings of Machine Learning Research},
  volume      = {145},
  pages       = {781--808},
  year        = {2022},
  organization = {PMLR},
}

@article{lindemulder2019maximal,
  author      = {Lindemulder, Nick},
  title       = {Maximal regularity with weights for parabolic problems with inhomogeneous boundary conditions},
  journal     = {Journal of Evolution Equations},
  volume      = {20},
  number      = {1},
  pages       = {59--108},
  year        = {2019},
  publisher   = {Springer},
  OPTdoi      = {10.1007/s00028-019-00515-7},
}

@article{oseledets2010tt,
  author      = {Oseledets, Ivan and Tyrtyshnikov, Eugene},
  title       = {{TT}-cross approximation for multidimensional arrays},
  journal     = {Linear Algebra and its Applications},
  volume      = {432},
  number      = {1},
  pages       = {70--88},
  year        = {2010},
  publisher   = {Elsevier},
  OPTdoi      = {10.1016/j.laa.2009.07.024},
}

@article{meyries2012maximal,
  author      = {Meyries, Martin and Schnaubelt, Roland},
  title       = {Maximal regularity with temporal weights for parabolic problems with inhomogeneous boundary conditions},
  journal     = {Mathematische Nachrichten},
  volume      = {285},
  number      = {8-9},
  pages       = {1032--1051},
  year        = {2012},
  publisher   = {Wiley Online Library},
  OPTdoi      = {10.1002/mana.201100057},
}

@article{mamonov2022interpolatory,
  author      = {Mamonov, Alexander V. and Olshanskii, Maxim A.},
  title       = {Interpolatory tensorial reduced order models for parametric dynamical systems},
  journal     = {Computer Methods in Applied Mechanics and Engineering},
  volume      = {397},
  pages       = {115122},
  year        = {2022},
  publisher   = {Elsevier},
  OPTdoi      = {10.1016/j.cma.2022.115122},
}

@article{mamonov2024tensorial,
  author      = {Mamonov, Alexander V. and Olshanskii, Maxim A.},
  title       = {Tensorial parametric model order reduction of nonlinear dynamical systems},
  journal     = {SIAM Journal on Scientific Computing},
  volume      = {46},
  number      = {3},
  pages       = {A1850--A1878},
  year        = {2024},
  publisher   = {SIAM},
  OPTdoi      = {10.1137/23M1553789},
}

@article{mamonov2025apriori,
  author      = {Mamonov, Alexander V. and Olshanskii, Maxim A.},
  title       = {A priori analysis of a tensor {ROM} for parameter dependent parabolic problems},
  journal     = {SIAM Journal on Numerical Analysis},
  volume      = {63},
  number      = {1},
  pages       = {239--261},
  year        = {2025},
  publisher   = {SIAM},
  OPTdoi      = {10.1137/23M1616844},
}

@book{morozov2012methods,
  author      = {Morozov, Vladimir Alekseevich},
  title       = {Methods for solving incorrectly posed problems},
  publisher   = {Springer Science \& Business Media},
  year        = {2012},
  OPTdoi      = {10.1007/978-1-4612-5280-1},
}

@book{pruss2016moving,
  author      = {Pr{\"u}ss, Jan and Simonett, Gieri},
  title       = {Moving interfaces and quasilinear parabolic evolution equations},
  volume      = {105},
  publisher   = {Springer},
  year        = {2016},
  OPTdoi      = {10.1007/978-3-319-27698-4},
}

@article{dolgov2014alternating,
  author      = {Dolgov, Sergey V. and Savostyanov, Dmitry V.},
  title       = {Alternating minimal energy methods for linear systems in higher dimensions},
  journal     = {SIAM Journal on Scientific Computing},
  volume      = {36},
  number      = {5},
  pages       = {A2248--A2271},
  year        = {2014},
  publisher   = {SIAM},
  OPTdoi      = {10.1137/140953289},
}

@incollection{frangos2010surrogate,
  author      = {Frangos, Michalis and Marzouk, Youssef and Willcox, Karen and van Bloemen Waanders, Bart},
  title       = {Surrogate and Reduced-Order Modeling: A Comparison of Approaches for Large-Scale Statistical Inverse Problems},
  booktitle   = {Large-Scale Inverse Problems and Quantification of Uncertainty},
  publisher   = {John Wiley \& Sons},
  pages       = {123--149},
  year        = {2010},
  OPTdoi      = {10.1002/9780470685853.ch7},
}

@article{benner2015survey,
  author      = {Benner, Peter and Gugercin, Serkan and Willcox, Karen},
  title       = {A Survey of Projection-Based Model Reduction Methods for Parametric Dynamical Systems},
  journal     = {SIAM Review},
  volume      = {57},
  number      = {4},
  pages       = {483--531},
  year        = {2015},
  OPTdoi      = {10.1137/130932715},
}

@incollection{grieves2017digital,
  author      = {Grieves, Michael and Vickers, John},
  title       = {Digital Twin: Mitigating Unpredictable, Undesirable Emergent Behavior in Complex Systems},
  booktitle   = {Transdisciplinary Perspectives on Complex Systems},
  pages       = {85--113},
  year        = {2017},
  publisher   = {Springer},
  OPTdoi      = {10.1007/978-3-319-38756-7_4},
}

@article{tao2018digital,
  author      = {Tao, Fei and Zhang, Meng and Liu, Ying and Nee, Andrew Y. C.},
  title       = {Digital Twin Driven Prognostics and Health Management for Complex Equipment},
  journal     = {CIRP Annals},
  volume      = {67},
  number      = {1},
  pages       = {169--172},
  year        = {2018},
  OPTdoi      = {10.1016/j.cirp.2018.04.055},
}

@book{quarteroni2016reduced,
  author      = {Quarteroni, Alfio and Manzoni, Andrea and Negri, Federico},
  title       = {Reduced Basis Methods for Partial Differential Equations: An Introduction},
  series      = {UNITEXT},
  volume      = {92},
  publisher   = {Springer},
  address     = {Cham},
  year        = {2016},
  OPTdoi      = {10.1007/978-3-319-15431-2},
}

@book{hackbusch2012tensor,
  author      = {Hackbusch, Wolfgang},
  title       = {Tensor Spaces and Numerical Tensor Calculus},
  series      = {Springer Series in Computational Mathematics},
  volume      = {42},
  publisher   = {Springer},
  address     = {Berlin, Heidelberg},
  year        = {2012},
  OPTdoi      = {10.1007/978-3-642-28027-6},
}

@article{grasedyck2013literature,
  author      = {Grasedyck, Lars and Kressner, Daniel and Tobler, Christine},
  title       = {A Literature Survey of Low-Rank Tensor Approximation Techniques},
  journal     = {GAMM-Mitteilungen},
  volume      = {36},
  number      = {1},
  pages       = {53--78},
  year        = {2013},
  OPTdoi      = {10.1002/gamm.201310004},
}

@article{galbally2010nonlinear,
  author      = {Galbally, Diego and Fidkowski, Krzysztof and Willcox, Karen and Ghattas, Omar},
  title       = {Nonlinear Model Reduction for Uncertainty Quantification in Large-Scale Inverse Problems},
  journal     = {International Journal for Numerical Methods in Engineering},
  volume      = {81},
  number      = {12},
  pages       = {1581--1608},
  year        = {2010},
  OPTdoi      = {10.1002/nme.2746},
}

@article{cui2015datadriven,
  author      = {Cui, Tiangang and Marzouk, Youssef M. and Willcox, Karen E.},
  title       = {Data-Driven Model Reduction for the {Bayesian} Solution of Inverse Problems},
  journal     = {International Journal for Numerical Methods in Engineering},
  volume      = {102},
  number      = {5},
  pages       = {966--990},
  year        = {2015},
  OPTdoi      = {10.1002/nme.4748},
}

@article{chertkov2022optimization,
  author      = {Chertkov, Andrei and Ryzhakov, Gleb and Novikov, Georgii and Oseledets, Ivan},
  title       = {Optimization of functions given in the tensor train format},
  journal     = {arXiv preprint arXiv:2209.14808},
  year        = {2022},
  doi         = {10.48550/arXiv.2209.14808},
}

@article{dolgov2020approximation,
  author      = {Dolgov, Sergey and Anaya-Izquierdo, Karim and Fox, Colin and Scheichl, Robert},
  title       = {Approximation and Sampling of Multivariate Probability Distributions in the Tensor Train Decomposition},
  journal     = {Statistics and Computing},
  volume      = {30},
  pages       = {603--625},
  year        = {2020},
  OPTdoi      = {10.1007/s11222-019-09910-z},
}

@article{mizan2026parametric,
  author      = {Mizan, Md Rezwan Bin and Olshanskii, Maxim and Timofeyev, Ilya},
  title       = {A parametric tensor {ROM} for the shallow water dam break problem},
  journal     = {Computers \& Fluids},
  pages       = {107005},
  year        = {2026},
  publisher   = {Elsevier},
  OPTdoi      = {10.1016/j.compfluid.2026.107005},
}

@article{vijaywargiya2026structure,
  author      = {Vijaywargiya, Arjun and Cyr, Eric C. and Gruber, Anthony},
  title       = {Structure-Aware Tensorial Model Reduction},
  journal     = {arXiv preprint arXiv:2604.26280},
  year        = {2026},
  doi         = {10.48550/arXiv.2604.26280},
}

@article{eigel2022lowrank,
  author      = {Eigel, Martin and Gruhlke, Robert and Marschall, Manuel},
  title       = {Low-Rank Tensor Reconstruction of Concentrated Densities with Application to {Bayesian} Inversion},
  journal     = {Statistics and Computing},
  volume      = {32},
  number      = {27},
  year        = {2022},
  OPTdoi      = {10.1007/s11222-022-10087-1},
}

@article{mueller2026tensor,
  author      = {Mueller, Nicholas and Zhao, Yiran and Badia, Santiago and Cui, Tiangang},
  title       = {A Tensor-Train Reduced Basis Solver for Parameterized Partial Differential Equations on {Cartesian} Grids},
  journal     = {Journal of Computational and Applied Mathematics},
  volume      = {472},
  pages       = {116790},
  year        = {2026},
  OPTdoi      = {10.1016/j.cam.2025.116790},
}

@article{lee2020model,
  author      = {Lee, Kookjin and Carlberg, Kevin T.},
  title       = {Model reduction of dynamical systems on nonlinear manifolds using deep convolutional autoencoders},
  journal     = {Journal of Computational Physics},
  volume      = {404},
  pages       = {108973},
  year        = {2020},
  publisher   = {Elsevier},
  OPTdoi      = {10.1016/j.jcp.2019.108973},
}

@article{fresca2021comprehensive,
  author      = {Fresca, Stefania and Dede', Luca and Manzoni, Andrea},
  title       = {A comprehensive deep learning-based approach to reduced order modeling of nonlinear time-dependent parametrized {PDEs}},
  journal     = {Journal of Scientific Computing},
  volume      = {87},
  number      = {2},
  pages       = {61},
  year        = {2021},
  publisher   = {Springer},
  OPTdoi      = {10.1007/s10915-021-01462-7},
}

@article{ivagnes2023towards,
  author      = {Ivagnes, Anna and Demo, Nicola and Rozza, Gianluigi},
  title       = {Towards a machine learning pipeline in reduced order modelling for inverse problems: neural networks for boundary parametrization, dimensionality reduction and solution manifold approximation},
  journal     = {Journal of Scientific Computing},
  volume      = {95},
  number      = {1},
  pages       = {23},
  year        = {2023},
  publisher   = {Springer},
  OPTdoi      = {10.1007/s10915-023-02142-4},
}

@article{lieberman2013goal,
  author      = {Lieberman, Chad and Willcox, Karen},
  title       = {Goal-Oriented Inference: Approach, Linear Theory, and Application to Advection Diffusion},
  journal     = {SIAM Review},
  volume      = {55},
  number      = {3},
  pages       = {493--519},
  year        = {2013},
  OPTdoi      = {10.1137/130913110},
}

@inproceedings{sozykin2022ttopt,
  author      = {Sozykin, Konstantin and Chertkov, Andrei and Schutski, Roman and Phan, Anh-Huy and Cichocki, Andrzej and Oseledets, Ivan},
  title       = {{TTOpt}: A maximum volume quantized tensor train-based optimization and its application to reinforcement learning},
  booktitle   = {Advances in Neural Information Processing Systems},
  volume      = {35},
  pages       = {26052--26065},
  year        = {2022},
  OPTdoi      = {10.48550/arXiv.2205.00293},
}

@incollection{goreinov2010submatrix,
  author      = {Goreinov, Sergei A. and Oseledets, Ivan V. and Savostyanov, Dmitry V. and Tyrtyshnikov, Eugene E. and Zamarashkin, Nikolai L.},
  title       = {How to find a good submatrix},
  booktitle   = {Matrix Methods: Theory, Algorithms and Applications},
  publisher   = {World Scientific},
  pages       = {247--256},
  year        = {2010},
  OPTdoi      = {10.1142/9789812836021_0015},
}

@article{savostyanov2014quasioptimality,
  author      = {Savostyanov, Dmitry V.},
  title       = {Quasioptimality of maximum-volume cross interpolation of tensors},
  journal     = {Linear Algebra and its Applications},
  volume      = {458},
  pages       = {217--244},
  year        = {2014},
  OPTdoi      = {10.1016/j.laa.2014.06.006},
}

@article{kapteyn2021probabilistic,
  author      = {Kapteyn, Michael G. and Pretorius, Jacob V. R. and Willcox, Karen E.},
  title       = {A probabilistic graphical model foundation for enabling predictive digital twins at scale},
  journal     = {Nature Computational Science},
  volume      = {1},
  number      = {5},
  pages       = {337--347},
  year        = {2021},
  OPTdoi      = {10.1038/s43588-021-00069-0},
}

@book{nasem2024foundational,
  author      = {{National Academies of Sciences, Engineering, and Medicine}},
  title       = {Foundational Research Gaps and Future Directions for Digital Twins},
  publisher   = {The National Academies Press},
  address     = {Washington, DC},
  year        = {2024},
  OPTdoi      = {10.17226/26894},
}

@article{cui2022deep,
  author      = {Cui, Tiangang and Dolgov, Sergey},
  title       = {Deep composition of tensor-trains using squared inverse {Rosenblatt} transports},
  journal     = {Foundations of Computational Mathematics},
  volume      = {22},
  pages       = {1863--1922},
  year        = {2022},
  OPTdoi      = {10.1007/s10208-021-09537-5},
}

@article{cui2023scalable,
  author      = {Cui, Tiangang and Dolgov, Sergey and Zahm, Olivier},
  title       = {Scalable conditional deep inverse {Rosenblatt} transports using tensor trains and gradient-based dimension reduction},
  journal     = {Journal of Computational Physics},
  volume      = {485},
  pages       = {112103},
  year        = {2023},
  OPTdoi      = {10.1016/j.jcp.2023.112103},
}

@book{engl1996regularization,
  author      = {Engl, Heinz W. and Hanke, Martin and Neubauer, Andreas},
  title       = {Regularization of Inverse Problems},
  series      = {Mathematics and its Applications},
  volume      = {375},
  publisher   = {Kluwer Academic Publishers},
  address     = {Dordrecht},
  year        = {1996},
  OPTdoi      = {10.1007/978-94-009-1740-8},
}

@article{olearyroseberry2022derivative,
  author      = {O'Leary-Roseberry, Thomas and Villa, Umberto and Chen, Peng and Ghattas, Omar},
  title       = {Derivative-informed projected neural networks for high-dimensional parametric maps governed by {PDEs}},
  journal     = {Computer Methods in Applied Mechanics and Engineering},
  volume      = {388},
  pages       = {114199},
  year        = {2022},
  OPTdoi      = {10.1016/j.cma.2021.114199},
}

@article{stuart2010inverse,
  author      = {Stuart, Andrew M.},
  title       = {Inverse problems: A {Bayesian} perspective},
  journal     = {Acta Numerica},
  volume      = {19},
  pages       = {451--559},
  year        = {2010},
  OPTdoi      = {10.1017/S0962492910000061},
}

@book{isakov2017inverse,
  author      = {Isakov, Victor},
  title       = {Inverse Problems for Partial Differential Equations},
  series      = {Applied Mathematical Sciences},
  volume      = {127},
  edition     = {3rd},
  publisher   = {Springer},
  year        = {2017},
  OPTdoi      = {10.1007/978-3-319-51658-5},
}

@book{hesthaven2016certified,
  author      = {Hesthaven, Jan S. and Rozza, Gianluigi and Stamm, Benjamin},
  title       = {Certified Reduced Basis Methods for Parametrized Partial Differential Equations},
  series      = {SpringerBriefs in Mathematics},
  publisher   = {Springer},
  year        = {2016},
  OPTdoi      = {10.1007/978-3-319-22470-1},
}

@article{kolda2009tensor,
  author      = {Kolda, Tamara G. and Bader, Brett W.},
  title       = {Tensor Decompositions and Applications},
  journal     = {SIAM Review},
  volume      = {51},
  number      = {3},
  pages       = {455--500},
  year        = {2009},
  OPTdoi      = {10.1137/07070111X},
}

@article{greif2019decay,
  author      = {Greif, Constantin and Urban, Karsten},
  title       = {Decay of the {Kolmogorov} {N}-width for wave problems},
  journal     = {Applied Mathematics Letters},
  volume      = {96},
  pages       = {216--222},
  year        = {2019},
  OPTdoi      = {10.1016/j.aml.2019.05.013},
}

@article{dihlmann2015certified,
  author      = {Dihlmann, Markus A. and Haasdonk, Bernard},
  title       = {Certified {PDE}-Constrained Parameter Optimization Using Reduced Basis Surrogate Models for Evolution Problems},
  journal     = {Computational Optimization and Applications},
  volume      = {60},
  number      = {3},
  pages       = {753--787},
  year        = {2015},
  OPTdoi      = {10.1007/s10589-014-9697-1},
}

@article{manzoni2019certified,
  author      = {Manzoni, Andrea and Pagani, Stefano},
  title       = {A Certified {RB} Method for {PDE}-Constrained Parametric Optimization Problems},
  journal     = {Communications in Applied and Industrial Mathematics},
  volume      = {10},
  number      = {1},
  pages       = {123--152},
  year        = {2019},
  OPTdoi      = {10.2478/caim-2019-0017},
}

@article{karcher2018reduced,
  author      = {K{\"a}rcher, Mark and Boyaval, S{\'e}bastien and Grepl, Martin A. and Veroy, Karen},
  title       = {Reduced Basis Approximation and A Posteriori Error Bounds for {4D-Var} Data Assimilation},
  journal     = {Optimization and Engineering},
  volume      = {19},
  number      = {3},
  pages       = {663--695},
  year        = {2018},
  OPTdoi      = {10.1007/s11081-018-9389-2},
}

@article{qian2017certified,
  author      = {Qian, Elizabeth and Grepl, Martin A. and Veroy, Karen and Willcox, Karen},
  title       = {A Certified Trust Region Reduced Basis Approach to {PDE}-Constrained Optimization},
  journal     = {SIAM Journal on Scientific Computing},
  volume      = {39},
  number      = {5},
  pages       = {S434--S460},
  year        = {2017},
  OPTdoi      = {10.1137/16M1081981},
}

@article{kartmann2024adaptive,
  author      = {Kartmann, Michael and Keil, Tim and Ohlberger, Mario and Volkwein, Stefan and Kaltenbacher, Barbara},
  title       = {Adaptive Reduced Basis Trust Region Methods for Parameter Identification Problems},
  journal     = {Computational Science and Engineering},
  volume      = {1},
  pages       = {3},
  year        = {2024},
  OPTdoi      = {10.1007/s44207-024-00002-z},
}

@article{zheltkova2018tensor,
  author      = {Zheltkova, Valeriya V. and Zheltkov, Dmitry A. and Grossman, Zvi and Bocharov, Gennady A. and Tyrtyshnikov, Eugene E.},
  title       = {Tensor based approach to the numerical treatment of the parameter estimation problems in mathematical immunology},
  journal     = {Journal of Inverse and Ill-posed Problems},
  volume      = {26},
  number      = {1},
  pages       = {51--66},
  year        = {2018},
  OPTdoi      = {10.1515/jiip-2016-0083},
}

@article{jones1998efficient,
  author      = {Jones, Donald R. and Schonlau, Matthias and Welch, William J.},
  title       = {Efficient Global Optimization of Expensive Black-Box Functions},
  journal     = {Journal of Global Optimization},
  volume      = {13},
  number      = {4},
  pages       = {455--492},
  year        = {1998},
}

@article{booker1999rigorous,
  author      = {Booker, Andrew J. and Dennis, Jr., J. E. and Frank, Paul D. and Serafini, David B. and Torczon, Virginia and Trosset, Michael W.},
  title       = {A rigorous framework for optimization of expensive functions by surrogates},
  journal     = {Structural Optimization},
  volume      = {17},
  number      = {1},
  pages       = {1--13},
  year        = {1999},
}

@article{forrester2009recent,
  author      = {Forrester, Alexander I. J. and Keane, Andy J.},
  title       = {Recent advances in surrogate-based optimization},
  journal     = {Progress in Aerospace Sciences},
  volume      = {45},
  pages       = {50--79},
  year        = {2009},
}

@article{alexandrov1998trust,
  author      = {Alexandrov, Natalia M. and Dennis, Jr., J. E. and Lewis, Robert Michael and Torczon, Virginia},
  title       = {A trust-region framework for managing the use of approximation models in optimization},
  journal     = {Structural Optimization},
  volume      = {15},
  number      = {1},
  pages       = {16--23},
  year        = {1998},
}

@article{yue2013accelerating,
  author      = {Yue, Yao and Meerbergen, Karl},
  title       = {Accelerating Optimization of Parametric Linear Systems by Model Order Reduction},
  journal     = {SIAM Journal on Optimization},
  volume      = {23},
  number      = {2},
  pages       = {1344--1370},
  year        = {2013},
}

@article{zahr2015progressive,
  author      = {Zahr, Matthew J. and Farhat, Charbel},
  title       = {Progressive construction of a parametric reduced-order model for {PDE}-constrained optimization},
  journal     = {International Journal for Numerical Methods in Engineering},
  volume      = {102},
  number      = {5},
  pages       = {1111--1135},
  year        = {2015},
}

@article{jones2001taxonomy,
  author      = {Jones, Donald R.},
  title       = {A Taxonomy of Global Optimization Methods Based on Response Surfaces},
  journal     = {Journal of Global Optimization},
  volume      = {21},
  number      = {4},
  pages       = {345--383},
  year        = {2001},
}

@article{shahriari2016taking,
  author      = {Shahriari, Bobak and Swersky, Kevin and Wang, Ziyu and Adams, Ryan P. and de Freitas, Nando},
  title       = {Taking the Human Out of the Loop: A Review of {Bayesian} Optimization},
  journal     = {Proceedings of the IEEE},
  volume      = {104},
  number      = {1},
  pages       = {148--175},
  year        = {2016},
}

@article{karcher2014certified,
  author      = {K{\"a}rcher, Mark and Grepl, Martin A.},
  title       = {A certified reduced basis method for parametrized elliptic optimal control problems},
  journal     = {ESAIM: Control, Optimisation and Calculus of Variations},
  volume      = {20},
  number      = {2},
  pages       = {416--441},
  year        = {2014},
}

@article{negri2013reduced,
  author      = {Negri, Federico and Rozza, Gianluigi and Manzoni, Andrea and Quarteroni, Alfio},
  title       = {Reduced Basis Method for Parametrized Elliptic Optimal Control Problems},
  journal     = {SIAM Journal on Scientific Computing},
  volume      = {35},
  number      = {5},
  pages       = {A2316--A2340},
  year        = {2013},
}

@article{ladeveze2010latin,
  author      = {Ladev{\`e}ze, Pierre and Passieux, Jean-Charles and N{\'e}ron, David},
  title       = {The {LATIN} multiscale computational method and the proper generalized decomposition},
  journal     = {Computer Methods in Applied Mechanics and Engineering},
  volume      = {199},
  number      = {21--22},
  pages       = {1287--1296},
  year        = {2010},
}

@article{nouy2010priori,
  author      = {Nouy, Anthony},
  title       = {A priori model reduction through proper generalized decomposition
                 for solving time-dependent partial differential equations},
  journal     = {Computer Methods in Applied Mechanics and Engineering},
  volume      = {199},
  number      = {23--24},
  pages       = {1603--1626},
  year        = {2010},
}

\appendix

\section{Regularization parameter selection: discrepancy principle}
\label{sec:discrepancy}

For the numerical assessment of the method, we generate the observations $y_{\mathrm{obs}}$ using the full-order forward map:
\begin{equation}\label{eq:noisyObs}
y_{\mathrm{obs}} = f(\alpha_{\mathrm{true}}) + \varepsilon_{\mathrm{meas}} = y_{\mathrm{true}} + \varepsilon_{\mathrm{meas}}
\end{equation}

\noindent with $y_{\mathrm{true}} = f(\alpha_{\mathrm{true}})$ denoting the noiseless observations and measurement noise $\varepsilon_{\mathrm{meas}}$. The noise contribution is controlled by the noise-to-signal ratio parameter $\delta_{\mathrm{meas}} > 0$.

For our  numerical experiments, we compute
\[
\varepsilon_{\mathrm{meas}}
= \delta_{\mathrm{meas}} \frac{\|y_{\mathrm{true}}\|_2}{\|\varepsilon_{\mathrm{noise}}\|_2} \varepsilon_{\mathrm{noise}},
\qquad \varepsilon_{\mathrm{noise}}\sim \mathcal{N}(0,I).
\]
\smallskip

The discrepancy between model predictions $\widehat f(\alpha)$ and observations $y_{\mathrm{obs}}$ can be attributed to two sources of error: the measurement noise and the ROM/numerical approximation error. We define the relative noise levels as
\[
\delta_{\mathrm{meas}} = \frac{\|\varepsilon_{\mathrm{meas}}\|_2}{\|y_{\mathrm{true}}\|_2}, \qquad
\delta_{\mathrm{rom}} = \frac{\|\varepsilon_{\mathrm{rom}}\|_2}{\|y_{\mathrm{true}}\|_2},
\]

\noindent and the total noise level as $\delta_{\mathrm{tot}} = \sqrt{\delta_{\mathrm{meas}}^2+\delta_{\mathrm{rom}}^2}$. The discrepancy principle~\cite{morozov2012methods} for the choice of $\lambda$ states that one should not fit the data beyond the noise level to avoid overfitting. Therefore a candidate $\lambda$ is accepted  when
\begin{equation}\label{e:discrepancy}
\frac{\|\widehat f(\alpha_{\lambda}) - y_{\mathrm{obs}}\|_2}{\|y_{\mathrm{true}}\|_2} \leq \tau \delta_{\mathrm{tot}}
\end{equation}

\noindent with a \emph{safety factor} $\tau \geq 1$. Here,
\[
\alpha_{\lambda} = \argmin_{\alpha \in \mathcal{A}} \left\{\tfrac{1}{2}\|\widehat f(\alpha) - y_{\mathrm{obs}} \|_2^2 + \tfrac{\lambda}{2}\|\alpha - \alpha_{\mathrm{ref}}\|_2^2\right\}
\]

\noindent is a family of solutions of the optimization problem in~\eqref{eq:inverse_problem} parameterized by $\lambda$. That is, we solve~\eqref{eq:inverse_problem} for multiple $\lambda$ in some range $[\lambda_{\mathrm{min}}, \lambda_{\mathrm{max}}] \subset \mathbb{R}_{+}$ and select as the optimal regularization parameter the largest $\lambda$ for which~\eqref{e:discrepancy} holds. If there is no measurement noise $\varepsilon_{\mathrm{meas}}$ (i.e., $\delta_{\mathrm{meas}} = 0$), the noise level $\delta_{\mathrm{tot}}$ solely depends on the ROM error $\delta_{\mathrm{rom}}$.

The criterion~\eqref{e:discrepancy} requires access to $\|y_{\mathrm{true}}\|_2$, which is generally unknown. In our synthetic experiments, $y_{\mathrm{true}}$ is available since we generate the data. In practice, one often replaces $\|y_{\mathrm{true}}\|_2$ by $\|y_{\mathrm{obs}}\|_2$.

Note that~\eqref{e:discrepancy} can be rewritten as
\begin{equation}\label{e:discrepancy2}
\|\widehat f(\alpha_\lambda)-y_{\mathrm{obs}}\|_2
\le
\rho,
\qquad
\rho \defeq \tau\delta_{\mathrm{tot}} \|y_{\mathrm{true}}\|_2 .
\end{equation}

\noindent In the numerical section, we refer to~\eqref{e:discrepancy2} and call $\rho$ the \emph{prescribed discrepancy level}.

\end{document}